\def\Xint#1{\mathchoice
{\XXint\displaystyle\textstyle{#1}}%
{\XXint\textstyle\scriptstyle{#1}}%
{\XXint\scriptstyle\scriptscriptstyle{#1}}%
{\XXint\scriptscriptstyle\scriptscriptstyle{#1}}%
\!\int}
\def\XXint#1#2#3{{\setbox0=\hbox{$#1{#2#3}{\int}$ }
\vcenter{\hbox{$#2#3$ }}\kern-.59\wd0}}
\def\avgint{\Xint-}
\newcommand{\vep}{\varepsilon}
\newcommand{\N}{\mathbb{N}}
\newcommand{\R}{\mathbb{R}}
\newcommand{\Q}{\mathbb{Q}}
\newcommand{\Om}{\Omega}
\newcommand{\Mod}{\textnormal{Mod}}
\newcommand{\diam}{\normalfont\text{diam}}
\newcommand{\rad}{\normalfont\text{rad}}
\DeclareMathOperator{\rcapa}{cap}
\newcommand{\mres}{\mathbin{\vrule height 2ex depth 2.2pt width
0.12ex\vrule height -0.3ex depth 2.2pt width .5ex}}
\newcommand{\mbdy}{\partial^*}
\newcommand{\abdy}{\partial^\alpha}
\newcommand{\fin}{f^{-1}}
\newtheorem{theorem}{Theorem}[section]
\newtheorem{proposition}[theorem]{Proposition}
\newtheorem{lemma}[theorem]{Lemma}
\theoremstyle{definition}
\newtheorem{defn}[theorem]{Definition}
\theoremstyle{remark}
\newtheorem{remark}[theorem]{Remark}
\numberwithin{equation}{section}
\begin{document}

\title{Modulus of families of sets of finite perimeter and  quasiconformal maps between metric spaces of globally $Q$-bounded
geometry}
\author{Rebekah Jones, Panu Lahti, Nageswari Shanmugalingam\footnote{The first and third authors' research
was partially supported by the grant DMS~\#1500440 of NSF (U.S.A.). Part of this research was conducted
during the visit of the three authors to Link\"oping University in Spring 2017 and Spring 2018; the authors wish to
thank that institution for its kind hospitality.}
}

\maketitle

\noindent{\small
{\bf Abstract} We generalize a result of Kelly~\cite{Kelly} to the setting of Ahlfors $Q$-regular metric measure spaces
supporting a $1$-Poincar\'e inequality. It is shown that if $X$ and $Y$ are two Ahlfors $Q$-regular spaces supporting
a $1$-Poincar\'e inequality and $f:X\to Y$ is a quasiconformal mapping, then the $Q/(Q-1)$-modulus of the collection
of measures $\mathcal{H}^{Q-1}\mres_{\Sigma E}$ corresponding to any collection of sets $E\subset X$ of finite perimeter
is quasi-preserved by $f$. We also show that for $Q/(Q-1)$-modulus almost every $\Sigma E$,
if the image surface $\Sigma f(E)$ does not see the singular set of $f$ 
as a large set, then $f(E)$ is also of finite perimeter. Even in the standard Euclidean 
setting our results are more general than that of Kelly, and hence are new even in there.
}

\bigskip

\noindent
{\small \emph{Key words and phrases}: finite perimeter, quasiconformal mapping, modulus of families of surfaces,
Ahlfors regular, Poincar\'e inequality.
}

\medskip

\noindent
{\small Mathematics Subject Classification (2010):
Primary: 30L10; Secondary: 26B30, 31E05.
}

\section{Introduction}

While classification of domains via conformal mappings gives a rich theory in the setting of planar domains, domains
in higher dimensional Euclidean spaces support no non-M\"obius conformal maps.
The most suitable geometric classification
in that setting is given by quasiconformal mappings. A homeomorphism $f:\Om\to\Om^\prime$ between two domains
$\Om,\Om^\prime\subset\R^n$ is quasiconformal if $f\in W^{1,n}_{loc}(\Om;\Om^\prime)$ and
there is a constant $K\ge 1$ such that whenever $x\in\Om$,
\[
\limsup_{r\to 0^+}\frac{\sup_{y\in \overline{B}(x,r)}|f(y)-f(x)|}{\inf_{y\in \Om\setminus B(x,r)}|f(y)-f(x)|}\le K.
\] 
The theory
of quasiconformal mappings was extended by Heinonen and Koskela in~\cite{HK} to the setting of metric measure spaces,
and in this non-smooth setting properties of quasiconformal mappings have been studied extensively, 
see for example~\cite{HK0, HK, HKST, Wil, BHW, KMS}. In this paper we continue this study by considering 
relationships between sets of finite perimeter and quasiconformal mappings in the spirit of~\cite{Kelly}.

The traditional perspective on quasiconformal mappings between Euclidean domains is that such a map is characterized
by its ability to quasi-preserve the conformal modulus of families of rectifiable curves in the respective domains. Thus
a homeomorphism $f:\Om\to\Om^\prime$ for two domains $\Om,\Om^\prime\subset\R^n$ is quasiconformal if there is 
a constant $C\ge 1$ such that whenever $\Gamma$ is a family of non-constant compact rectifiable curves in $\Om$,
\[
\frac{1}{C}\Mod_n(\Gamma)\le \Mod_n(f\Gamma)\le C\, \Mod_n(\Gamma).
\]
Here $f\Gamma$ is the family of curves obtained as images of curves in $\Gamma$ under $f$. An excellent discussion
about Euclidean quasiconformal mappings can be found in~\cite{Va}, see
also~\cite[Theorem~2.6.1]{Kelly}. A less well-known fact is that quasiconformal
mappings between two domains $\Om,\Om^\prime\subset\R^n$ quasi-preserve the
$\tfrac{n}{n-1}$-modulus of certain families of surfaces obtained as ``essential boundaries" of sets of finite perimeter.
This result is due to Kelly~\cite[Theorem~6.6]{Kelly}. 
In~\cite{Kelly} the families considered were the classes of sets $E\subset\Omega\subset\R^n$ of finite perimeter such that
$\mathcal{H}^{n-1}(\partial E)$ is finite  \emph{and} satisfies a double-sided
cone condition at \emph{every} point in $\partial E$,
see~\cite[Definition~6.1]{Kelly}. Kelly calls the boundary of such a set $E$ a \emph{surface}.
Building upon the Federer theory of differential forms for sets of finite perimeter
and the associated Gauss-Green theorem (see~\cite{Fed}), in~\cite{Kelly} it is shown that if $f$ is a quasiconformal
mapping, then for $\Mod_{n/(n-1)}$-almost every surface,
there is a change of variables formula, see~\cite[Theorem~4.7.1]{Kelly}.
Moreover, there is a family $\Sigma_0$ of
sets of finite perimeter in $\Om$ with $\Mod_{n/(n-1)}(\Sigma_0)=0$ such that whenever
$\partial E\subset \Om$ is a surface
with $E\not\in\Sigma_0$, then $f(E)$ is of finite perimeter in $\Om^\prime$ (\cite[Theorem~6.3]{Kelly}). 
However, there is a gap in the proof of~\cite[Theorem~6.3]{Kelly}, where the actual object studied is the reduced
boundary of $E$, denoted $\beta(E)$ in~\cite[page~372]{Kelly}, and this part of the boundary could be strictly smaller than the
measure-theoretic boundary of $E$.
According to Federer's characterization,
a Euclicean set is of finite perimeter if and only if the
$(n-1)$-dimensional Hausdorff measure of the measure-theoretic boundary of $E$ is finite. In~\cite{Kelly} it is shown
that for almost every $E$, $\beta (f(E))$
has finite $(n-1)$-dimensional Hausdorff measure, and this is not sufficient
to conclude that $f(E)$
is of finite perimeter.

The link between quasiconformal mappings and families of surfaces is natural also in light of the link
between quasiconformal mappings and moduli of families of curves, for in the Euclidean setting it is known
that there is a natural reciprocal connection between
families of surfaces separating two compacta and families of curves connecting the two compacta, see~\cite{AO}.
This link was already portended in~\cite[Lemma~5]{AB} (in planar geometry, the separating ``surfaces" are also curves).
Motivated by the results in~\cite{Kelly, AO}, the goal of this paper is to
prove a result similar to that
of~\cite[Theorem~6.6]{Kelly} for quasiconformal mappings between two complete metric measure spaces
equipped with an Ahlfors $Q$-regular (with $Q>1$) measure and
supporting a $1$-Poincar\'e inequality in the sense of
Heinonen and Koskela~\cite{HK}, and indeed, we consider families of sets from the
collection of \emph{all} sets of finite perimeter without the additional geometric 
constraints considered in~\cite{Kelly} (see Theorem~\ref{thm:main-1}).
To do so, we use the tools developed in~\cite{HK, HKST} regarding first-order calculus
on non-smooth spaces and the theory of BV functions first constructed in~\cite{M}, 
together with the result from~\cite{KMS} that quasiconformal maps are characterized by 
quasi-preserving the measure density of measurable subsets of $\Om$. This latter result is itself a generalization
of the work of Gehring and Kelly~\cite{GK}. Unlike in the work of Kelly~\cite{Kelly}, 
the notions of the Gauss-Green theorem and differential forms are not
available in the metric setting, and instead, we adapt the geometric measure theory tools developed in~\cite{BHW} in the metric
setting to verify an analog of the change of variables formula for sets of finite perimeter in the non-smooth setting.
The results of~\cite{BHW} are not directly applicable to our setting as neither the measure-theoretic boundary $\mbdy E$
(see Definition~\ref{def:meas-thr-bdy})
nor the essential boundary $\Sigma E$ of a set $E$ of finite perimeter is a $(Q-1)$-set (that is, it is not
Ahlfors $(Q-1)$-regular).
Here $\Sigma E$ is the subset of the boundary of $E$
made up of points that see both $E$ and its complement as having positive \emph{lower density}, see
Definition~\ref{def:gamma} below.
Therefore in this paper we combine some of the techniques of~\cite{BHW} with
the current technology on sets of finite perimeter to conduct a careful analysis of the images of $\Sigma E$.

In what follows, both $X$ and $Y$ are complete metric spaces equipped with an Ahlfors $Q$-regular measure 
for some $Q>1$, $f:X\to Y$ a quasiconformal homeomorphism,
$\mathcal{L}$ denotes the collection of measures $\mathcal{H}^{Q-1}\mres_{\Sigma E}$ corresponding
to sets $E\subset X$ that are of finite perimeter in $X$, and $f\mathcal{L}$ is the corresponding collection of 
measures $\mathcal{H}^{Q-1}\mres_{\Sigma f(E)}$. 

The quantities 
$L_f$ and $l_f$ represent the following:
\begin{equation}\label{eq:dilation}
L_f(x)=\limsup_{r\to 0^+}\frac{\sup_{y\in \overline{B}(x,r)}d_Y(f(x),f(y))}{r},\ \ 
l_f(x)=\liminf_{r\to0^+}\frac{\inf_{y\in X\setminus B(x,r)}d_Y(f(x),f(y))}{r},
\end{equation}
with $d_Y$ denoting the metric on the space $Y$, see Definition~\ref{def:dilations} below.
Here, for $1\le p<\infty$,
\[
\Mod_p(\mathcal{L})=\inf\bigg\lbrace\int_X\rho^p\, d\mathcal{H}^Q\, :\, \rho\text{ non-negative Borel with }
\int_{\Sigma E}\rho\, d\mathcal{H}^{Q-1}\ge 1
 \text{ for each }E\in\mathcal{L}\bigg\rbrace,
\]
see Definition~\ref{def:modulus} below. 
Here $\mathcal{L}$ has a dual identity, one as a collection of sets  $E\subset X$ of finite perimeter, and
the other as the collection of measures $\mathcal{H}^{Q-1}\mres_{\Sigma E}$.
In considering the quasiconformal images $f\mathcal{L}$, the family
$f\mathcal{L}$ stands for both the collection $f(E)$, $E\in\mathcal{L}$, and also for
the measures $\mathcal{H}^{Q-1}\mres_{\Sigma f(E)}$.

The following is the main result of this paper.

\begin{theorem}\label{thm:main-1}
Let $X,Y$ be two complete
Ahlfors $Q$-regular metric spaces, $Q>1$, that support a $1$-Poincar\'e inequality, 
and let $f:X\to Y$ be a quasiconformal map.
Then there exists $C>0$ such that for every collection $\mathcal{L}$ of
bounded sets of  finite perimeter measure in $X$ we have that 
\begin{equation}\label{eq:conserve-mod}
\Mod_{Q/(Q-1)}(\mathcal{L})\le C\, \Mod_{Q/(Q-1)}(f\mathcal{L}) 
\end{equation}
and
\begin{equation}\label{eq:conserve-mod-inverse}
\Mod_{Q/(Q-1)}(f\mathcal{L}^{\prime})\le C\,\Mod_{Q/(Q-1)}(\mathcal{L}^{\prime})
\end{equation}
where $f\mathcal{L}$ denotes the collection of images under $f$ and
$\mathcal{L}^{\prime}$ consists
of all $E\in\mathcal{L}$ for which
$0<L_{f}(x)<\infty$ for $\mathcal{H}^{Q-1}$-almost every $x\in \Sigma E$.
\end{theorem}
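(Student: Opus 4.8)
The plan is to establish both inequalities by constructing, for each admissible density on one side, a comparison density on the other side, and then controlling how $f$ distorts the relevant Hausdorff measures and integrals along $\Sigma E$. For \eqref{eq:conserve-mod}, suppose $\rho$ is admissible for $f\mathcal{L}$, so that $\int_{\Sigma f(E)}\rho\,d\mathcal{H}^{Q-1}\ge 1$ for each $E\in\mathcal{L}$. I would like to pull $\rho$ back to a Borel function $\tilde\rho$ on $X$ of the form $\tilde\rho(x)=\rho(f(x))\,L_f(x)^{Q-1}$ (or a similar expression involving the pointwise dilations from \eqref{eq:dilation}), and show that $\tilde\rho$, up to a fixed multiplicative constant, is admissible for $\mathcal{L}$ while $\int_X\tilde\rho^{Q/(Q-1)}\,d\mathcal{H}^Q\le C\int_Y\rho^{Q/(Q-1)}\,d\mathcal{H}^Q$. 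The first claim requires a change-of-variables inequality for the $(Q-1)$-codimensional measures transported along $\Sigma E$ under $f$; the second requires the pointwise volume-derivative bound $L_f(x)^Q\lesssim J_f(x)$ coming from quasiconformality and Ahlfors regularity, combined with H\"older's inequality to absorb the Jacobian, exactly the place where the conjugate exponent $Q/(Q-1)$ is used.

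The core estimate is therefore a surface change-of-variables statement: for a set $E$ of finite perimeter and a nonnegative Borel $g$ on $Y$,
\begin{equation*}
\int_{\Sigma f(E)} g\,d\mathcal{H}^{Q-1}\le C\int_{\Sigma E} (g\circ f)\,L_f^{Q-1}\,d\mathcal{H}^{Q-1},
\end{equation*}
together with its companion in terms of $l_f$, and the fact that $f(\Sigma E)$ and $\Sigma f(E)$ agree up to $\mathcal{H}^{Q-1}$-null sets. To prove this I would adapt the geometric-measure-theory machinery of~\cite{BHW} referenced in the introduction: cover $\Sigma E$ by small balls on which $f$ is bi-Lipschitz-like with distortion controlled by $L_f$ and $l_f$, use that a quasiconformal map between Ahlfors $Q$-regular $1$-Poincar\'e spaces is differentiable in the metric sense a.e.\ and has $L_f=l_f$ a.e.\ (the ``metric differential'' comparison from~\cite{HK,HKST,KMS}), and pass to the limit over a Vitali-type cover using that $\mathcal{H}^{Q-1}\mres_{\Sigma E}$ is a Radon measure with the codimension-one density bounds available for sets of finite perimeter in this setting. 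A subtlety is that $\Sigma E$ is \emph{not} Ahlfors $(Q-1)$-regular, so I cannot quote~\cite{BHW} directly; instead I would use only the upper and lower density estimates that do hold, which is enough to run the covering argument.

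For the reverse inequality \eqref{eq:conserve-mod-inverse}, the restriction to $\mathcal{L}'$ (where $0<L_f(x)<\infty$ $\mathcal{H}^{Q-1}$-a.e.\ on $\Sigma E$) is exactly what lets the above push-forward argument be inverted: given $\rho$ admissible for $\mathcal{L}'$, one sets $\sigma(y)=\rho(f^{-1}(y))\,L_{f^{-1}}(y)^{Q-1}$, checks admissibility for $f\mathcal{L}'$ via the surface change of variables applied to $f^{-1}$ (itself quasiconformal), and bounds the $Q/(Q-1)$-energy using $L_{f^{-1}}^Q\lesssim J_{f^{-1}}$ and H\"older; the hypothesis $L_f>0$ a.e.\ guarantees $L_{f^{-1}}<\infty$ a.e.\ along the image surfaces so that $\sigma$ is genuinely defined and integrable. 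I would also need to know that $E\in\mathcal{L}'$ implies $f(E)$ has locally finite perimeter so that $\mathcal{H}^{Q-1}\mres_{\Sigma f(E)}$ makes sense as an element of $f\mathcal{L}'$; this is the content of the second half of the paper's program (the image being of finite perimeter away from the singular set), and I would invoke whichever earlier lemma supplies it. The main obstacle throughout is the surface change-of-variables inequality without $(Q-1)$-regularity of $\Sigma E$ — controlling the interaction of the metric differentiability of $f$ with the weak density bounds of the perimeter measure — and making the $L_f$ versus $l_f$ discrepancy disappear on a set of full $\mathcal{H}^{Q-1}$-measure in $\Sigma E$.
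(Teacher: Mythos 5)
Your high-level strategy matches the paper's: pull an admissible $\rho$ back to $\tilde\rho\cong(\rho\circ f)\,L_f^{Q-1}$ (the paper uses the equivalent $J_f^{(Q-1)/Q}$ by Lemma~\ref{lem:Jac-geq-L^Q}), verify admissibility via a surface change-of-variables along $\Sigma E$, and control the $Q/(Q-1)$-energy using $L_f^Q\lesssim J_f$. The choice of the conjugate exponent and the invocation of the ``essential boundary'' machinery from~\cite{BHW} without $(Q-1)$-regularity are also the paper's. However there are two genuine gaps.

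The first and more serious gap is that the proposed surface change-of-variables
\[
\int_{\Sigma f(E)} g\,d\mathcal{H}^{Q-1}\le C\int_{\Sigma E} (g\circ f)\,L_f^{Q-1}\,d\mathcal{H}^{Q-1}
\]
simply does \emph{not} hold for every set $E$ of finite perimeter, and no covering argument can rescue it for a fixed $E$. The singular set $P=\{L_f\in\{0,\infty\}\}$ has $\mathcal{H}^Q(P)=0=\mathcal{H}^Q(f(P))$, but this does not preclude the possibility that $\mathcal{H}^{Q-1}(\Sigma f(E)\cap f(P\cap\{L_f=0\}))>0$ for some $E$; for such an $E$ the right-hand side above is zero on that piece while the left-hand side is not. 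So the desired estimate can only be true for $\Mod_{Q/(Q-1)}$-almost every $E$, and establishing the smallness of the exceptional collection is itself a modulus-level covering argument: one must build an admissible density $g=\sup_i(s_i/r_i)^{Q-1}\chi_{2B_i}$ from a quasisymmetric cover of $f(U)$ (Proposition~\ref{prop:image-alpha-bdy}, especially part (3) applied to $U_0=P$) to show that $\{E:\mathcal{H}^{Q-1}(\Sigma f(E)\cap f(P))>0\}$ is $\Mod_{Q/(Q-1)}$-null. Once this is in hand, the absolute continuity $f_\#\mathcal{H}^{Q-1}\mres_{\Sigma f(E)}\ll\mathcal{H}^{Q-1}\mres_{\Sigma E}$ and the Radon--Nikodym bound follow by decomposing $X\setminus P$ into the sets $A_{n,m}$ on which $f$ is locally $m$-Lipschitz (Proposition~\ref{prop:abs-cont} and Lemma~\ref{thm:J_E^(Q/Q-1)-leq-Jac}). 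Note also that the step you flag as needing $L_f=l_f$ a.e.\ is not needed: the elementary pointwise bound $L_f(x,r)\le\eta(1)\,l_f(x,r)$ from quasisymmetry holds \emph{everywhere} and is what the paper uses; an $\mathcal{H}^Q$-a.e.\ identity would anyway say nothing on the $\mathcal{H}^Q$-null set $\Sigma E$.

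The second gap concerns \eqref{eq:conserve-mod-inverse}. You say you would need to know that $E\in\mathcal{L}'$ implies $f(E)$ is of finite perimeter, invoking a later lemma. But the measure $\mathcal{H}^{Q-1}\mres_{\Sigma f(E)}$ is a perfectly well-defined Borel measure regardless of whether $f(E)$ has finite perimeter, so no such fact is needed to make $f\mathcal{L}'$ meaningful — and indeed the paper explicitly notes that it does \emph{not} know $f(E)$ to be of finite perimeter here (that result, for a more restricted class of $E$, is the content of Theorem~\ref{thm:main-3}). The real obstruction is that one cannot simply apply the forward argument to $f^{-1}$ and $f\mathcal{L}'$, because the proof of absolute continuity relies on $\mathcal{H}^{Q-1}(\mbdy E\setminus\Sigma E)=0$, which uses that $E$ is of finite perimeter. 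The resolution is direct: using that $E\in\mathcal{L}'$ forces $\mathcal{H}^{Q-1}(P\cap\Sigma E)=0$, and that $f$ is locally bi-Lipschitz on each $A_{n,m}$, one establishes $\mathcal{H}^{Q-1}\mres_{\Sigma E}\ll f_\#\mathcal{H}^{Q-1}\mres_{\Sigma f(E)}$ directly (equation~\eqref{eq:meas-abs-cont}) and then adapts the Radon--Nikodym bound to $f^{-1}$.
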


The above theorem gives new results even in the Euclidean setting, addressing the wider class 
of \emph{all} sets of finite perimeter rather than just those that satisfy a cone property at each point of the topological boundary,
with the topological boundary of finite Hausdorff $(n-1)$-dimensional measure as considered in~\cite[Definition~6.1]{Kelly}.

In proving Theorem~\ref{thm:main-1} we also show that for $\Mod_{Q/(Q-1)}$-almost every set $E\subset X$ of finite perimeter
the pull-back measure under $f$ of $\mathcal{H}^{Q-1}\mres_{\Sigma f(E)\cup[\mbdy f(E)\setminus f(P)]}$
is absolutely continuous with respect to $\mathcal{H}^{Q-1}\mres_{\Sigma E}$, with its Radon-Nikodym derivative estimated
by $J_f^{(Q-1)/Q}$, see Lemma~\ref{thm:J_E^(Q/Q-1)-leq-Jac} and Proposition~\ref{prop:abs-cont}.
Here
\[
P=\{x\in X :\, L_f(x)=\infty\text{ or }L_f(x)=0\}
\]
is the singular set of $f$.
We also address the question of whether images of sets of finite perimeter are of finite perimeter. There are examples
of planar quasiconformal mappings that map the unit disk to the von Koch snowflake domain,
and so map a set of 
finite perimeter to a set that is not of finite perimeter.
Hence we cannot expect
images of all sets of finite perimeter to be of finite perimeter, see also~\cite{BHW}.
From the results in~\cite{HK,HKST} we know that $\mathcal{H}^Q(P)=0$. Recall from there also that
both $f$ and $f^{-1}$ satisfy Lusin's condition $N$, that is, for sets 
$K\subset X$, $\mathcal{H}^Q(f(K))=0$ if and only if $\mathcal{H}^Q(K)=0$,
or equivalently,
$f_\#\mathcal{H}^Q_Y\ll\mathcal{H}^Q_X\ll f_\#\mathcal{H}^Q_Y$ (see~\cite[Theorem~8.12]{HKST}). 
Following \cite{HKST}, the Radon-Nikodym derivative
of $f_\#\mathcal{H}^Q_Y$ with respect to $\mathcal{H}^Q_X$ is denoted by $J_f$. The quantities
$J_f$, $L_f^Q$, and $l_f^Q$ are comparable to each other almost everywhere in $X$, see Lemma~\ref{lem:Jac-geq-L^Q}
below.

\begin{theorem}\label{thm:main-3}
Let $X,Y$ be complete
Ahlfors $Q$-regular metric spaces, $Q>1$, that support a $1$-Poincar\'e inequality, 
and let $f:X\to Y$ be a quasiconformal map.
Let $\mathcal{L}_P$ be the collection of all sets $E\subset X$ of finite perimeter  with
$\mathcal{H}^{Q-1}(\mbdy E\cap P)>0$ or 
$\mathcal{H}^{Q-1}(f((\mbdy E\setminus \Sigma E)\cap P))>0$.
Then for $\Mod_{Q/(Q-1)}$-almost every
bounded set $E\subset X$ of positive and finite perimeter in $X$ that does not belong to $\mathcal{L}_P$, the set $f(E)$ is of 
finite perimeter in $Y$ and the pull-back measure satisfies
\begin{equation}\label{eq:mutual-abs-cont-of-surfaces}
f_\#\mathcal{H}^{Q-1}\mres_{\partial^*f(E)}\ll\mathcal{H}^{Q-1}\mres_{\partial^*E}\ll f_\#\mathcal{H}^{Q-1}\mres_{\partial^*f(E)}
\end{equation}
with Radon-Nikodym derivative
$J_{f,E}\cong J_f^{(Q-1)/Q}$ where the comparison constant
$C$ depends only on the Ahlfors regularity constants and the Poincar\'e inequality
constants of $X$ and $Y$, and on the quasiconformality constant of $f$. Furthermore,
\begin{equation}\label{eq:main mod estimates}
\frac{1}{C}\, \Mod_{Q/(Q-1)}(\mathcal{L}\setminus\mathcal{L}_P)
\le \Mod_{Q/(Q-1)}(f(\mathcal{L}\setminus\mathcal{L}_P))
\le C\, \Mod_{Q/(Q-1)}(\mathcal{L}\setminus\mathcal{L}_P).
\end{equation}
\end{theorem}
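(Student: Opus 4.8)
The plan is to derive Theorem~\ref{thm:main-3} from Theorem~\ref{thm:main-1}, Proposition~\ref{prop:abs-cont}, Lemma~\ref{thm:J_E^(Q/Q-1)-leq-Jac} and Lemma~\ref{lem:Jac-geq-L^Q}, together with the Federer-type characterization available for complete Ahlfors $Q$-regular spaces supporting a $1$-Poincar\'e inequality (a set has finite perimeter if and only if the $\mathcal{H}^{Q-1}$-measure of its measure-theoretic boundary is finite) and the known fact that in this setting $\mathcal{H}^{Q-1}(\mbdy E\setminus\Sigma E)=0$ for every set $E$ of finite perimeter, with the perimeter measure comparable to $\mathcal{H}^{Q-1}\mres_{\Sigma E}$. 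First I would reduce the assertion that $f(E)$ is of finite perimeter to the finiteness of $\mathcal{H}^{Q-1}(\mbdy f(E))$. To obtain this for $\Mod_{Q/(Q-1)}$-a.e.\ $E$, fix $x_0\in X$ and exhaust $X$ by balls $B_j=B(x_0,j)$; since $f(\overline{B_j})$ is compact we have $\int_{B_j}J_f\,d\mathcal{H}^Q=\mathcal{H}^Q(f(B_j))<\infty$, so $\rho_j:=J_f^{(Q-1)/Q}\chi_{\overline{B_j}}\in L^{Q/(Q-1)}(X)$. Applying Proposition~\ref{prop:abs-cont} and Lemma~\ref{thm:J_E^(Q/Q-1)-leq-Jac} to the sub-collection of those $E$ lying in $B_j$, and noting that the family of $E$ with $\int_{\Sigma E}\rho_j\,d\mathcal{H}^{Q-1}=\infty$ has zero $\Mod_{Q/(Q-1)}$ (as $\eps\rho_j$ is admissible for it for every $\eps>0$), one gets for $\Mod_{Q/(Q-1)}$-a.e.\ such $E$ that
\[
\mathcal{H}^{Q-1}\big(\Sigma f(E)\cup[\mbdy f(E)\setminus f(P)]\big)\le C\int_{\Sigma E}\rho_j\,d\mathcal{H}^{Q-1}<\infty .
\]
Countable subadditivity of modulus over $j$ promotes this to $\Mod_{Q/(Q-1)}$-a.e.\ $E\in\mathcal{L}$.

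The main work is then to show $\mathcal{H}^{Q-1}(\mbdy f(E)\cap f(P))=0$ for $\Mod_{Q/(Q-1)}$-a.e.\ $E\notin\mathcal{L}_P$, since combining this with the previous step yields $\mathcal{H}^{Q-1}(\mbdy f(E))<\infty$ and hence, by the Federer criterion, that $f(E)$ is of finite perimeter in $Y$. Since $f$ is a homeomorphism, $\mbdy f(E)\subseteq\partial f(E)=f(\partial E)$, so every point of $\mbdy f(E)\cap f(P)$ is the $f$-image of a point of $\partial E\cap P$; the content of this step is to argue, via the characterization of quasiconformality through quasi-preservation of measure density away from $P$ (from \cite{KMS}), that such a point is forced --- off a set of the excluded types --- to lie in $\Sigma f(E)$ (already controlled above) or in $f((\mbdy E\setminus\Sigma E)\cap P)$ or in $f(\Sigma E\cap P)$. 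The two conditions defining $\mathcal{L}_P$, namely $\mathcal{H}^{Q-1}(\mbdy E\cap P)=0$ and $\mathcal{H}^{Q-1}(f((\mbdy E\setminus\Sigma E)\cap P))=0$, are precisely what is needed to annihilate these contributions: the first gives $\mathcal{H}^{Q-1}(\Sigma E\cap P)=0$ and hence, by the absolute continuity of the previous step, $\mathcal{H}^{Q-1}(f(\Sigma E\cap P))=0$ for $\Mod_{Q/(Q-1)}$-a.e.\ $E$, while the second disposes of the $\mathcal{H}^{Q-1}$-negligible set $(\mbdy E\setminus\Sigma E)\cap P$, whose image under $f$ need not be $\mathcal{H}^{Q-1}$-negligible.

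Next I would identify the Radon--Nikodym derivative. With $f(E)$ now of finite perimeter one has $\mathcal{H}^{Q-1}(\mbdy f(E)\setminus\Sigma f(E))=0$, so \eqref{eq:mutual-abs-cont-of-surfaces} reduces to mutual absolute continuity of $f_\#\mathcal{H}^{Q-1}\mres_{\Sigma f(E)}$ and $\mathcal{H}^{Q-1}\mres_{\Sigma E}$. One direction, with derivative $\le CJ_f^{(Q-1)/Q}$, is immediate from Proposition~\ref{prop:abs-cont} and Lemma~\ref{thm:J_E^(Q/Q-1)-leq-Jac} on restricting the measure involved to the smaller set $\Sigma f(E)$. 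For the reverse direction I would apply Proposition~\ref{prop:abs-cont} to the quasiconformal map $f^{-1}$ and the set $f(E)$; here one checks, using Theorem~\ref{thm:main-1} to transfer modulus-a.e.\ statements for $f^{-1}$ back to $X$, that $f(E)$ lies outside the $f^{-1}$-analogue of $\mathcal{L}_P$ --- its singular set agrees with $f(P)$ up to $\mathcal{H}^{Q-1}$-null sets, $\mathcal{H}^{Q-1}(\mbdy f(E)\cap f(P))=0$ by the previous step, and the remaining condition is controlled by $\mathcal{H}^{Q-1}(f((\mbdy E\setminus\Sigma E)\cap P))=0$. Together with $J_{f^{-1}}\circ f=1/J_f$ a.e.\ (the chain rule and Lusin's condition $N$) and the comparability $J_f\cong L_f^Q\cong l_f^Q$ from Lemma~\ref{lem:Jac-geq-L^Q}, this gives $\mathcal{H}^{Q-1}\mres_{\Sigma E}\ll f_\#\mathcal{H}^{Q-1}\mres_{\Sigma f(E)}$ with derivative $\le CJ_f^{-(Q-1)/Q}$, so $J_{f,E}\cong J_f^{(Q-1)/Q}$ with the stated dependence of $C$.

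Finally, the estimate \eqref{eq:main mod estimates} follows by applying Theorem~\ref{thm:main-1} to the collection $\mathcal{L}\setminus\mathcal{L}_P$: \eqref{eq:conserve-mod} gives the left-hand inequality, and since each $E\in\mathcal{L}\setminus\mathcal{L}_P$ has $\mathcal{H}^{Q-1}(\Sigma E\cap P)\le\mathcal{H}^{Q-1}(\mbdy E\cap P)=0$ and hence $0<L_f<\infty$ $\mathcal{H}^{Q-1}$-a.e.\ on $\Sigma E$, we have $\mathcal{L}\setminus\mathcal{L}_P=(\mathcal{L}\setminus\mathcal{L}_P)'$, so \eqref{eq:conserve-mod-inverse} applies verbatim and gives the right-hand inequality. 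I expect the main obstacle to be the second step: pinning down exactly how the singular set $P$ permits $\mbdy f(E)$ to differ from (essentially) $f(\mbdy E)$, and showing that the discrepancy --- which is only $\mathcal{H}^{Q-1}$-null on $X$ but may be enlarged by $f$, since $f$ satisfies Lusin's condition $N$ only for $\mathcal{H}^Q$ --- is genuinely negligible for $\Mod_{Q/(Q-1)}$-a.e.\ $E$ outside $\mathcal{L}_P$. This is the careful density analysis of images of $\Sigma E$ announced in the introduction, and it must be arranged symmetrically so that it also applies to $f^{-1}$ for the reverse bound above.
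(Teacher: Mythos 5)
Your proposal is correct and follows essentially the same route as the paper: establish finiteness of $\mathcal{H}^{Q-1}(\mbdy f(E))$ (hence finite perimeter of $f(E)$ by the Federer criterion) using Proposition~\ref{prop:image-alpha-bdy}, Proposition~\ref{prop:abs-cont} and the definition of $\mathcal{L}_P$; obtain the mutual absolute continuity and the two-sided Radon--Nikodym estimate via Lemma~\ref{thm:J_E^(Q/Q-1)-leq-Jac}, its $f^{-1}$-analogue, and the chain rule; and observe that $\mathcal{L}\setminus\mathcal{L}_P\subset\mathcal{L}'$ so that both halves of Theorem~\ref{thm:main-1} apply directly. One minor remark: the detour through ``$f(E)$ lies outside the $f^{-1}$-analogue of $\mathcal{L}_P$'' (and the claim that the singular set of $f^{-1}$ agrees with $f(P)$, which as stated would need some care since the definitions involve $\limsup$ and $\liminf$ at every point rather than a.e.) is not needed --- the reverse absolute continuity \eqref{eq:meas-abs-cont} is established in the proof of Theorem~\ref{thm:main-1} for every $E\in\mathcal{L}'$ by a direct biLipschitz covering argument on $\Sigma E\setminus P$, and the adaptation of Lemma~\ref{thm:J_E^(Q/Q-1)-leq-Jac} to $f^{-1}$ is stated there only for families of sets of finite perimeter, with no $\mathcal{L}_P$-type hypothesis required.
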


The above results are not as discordant with the theory of quasiconformal mappings as it might seem. The characterization
of quasiconformal mappings by quasi-preservation of $Q$-modulus of families of curves is too strong; one only needs
the quasi-preservation of $Q$-modulus of families of rectifiable curves that connect pairs of disjoint compact sets $K, F$. 
Such classes of curves are associated with the relative capacity $\rcapa_Q(K,F)$, and the super-level sets of the 
potential associated with this capacity give sets of finite perimeter whose perimeter sets separate $K$ from $F$, and
it is the families of such sets that connect quasiconformal maps to the BV theory.

We now consider the implications of the above Theorem~\ref{thm:main-3} for the Euclidean setting $\R^n$.
An analog of the above theorem found in~\cite{Kelly} is Theorem~6.3, but
the proof of~\cite[Theorem~6.3]{Kelly}
has a gap as explained above. However, in light of the restrictions placed on the sets $E$ considered in~\cite{Kelly},
we know that those sets $E$ satisfy $\partial E=\Sigma E$, and for such $E$ it follows from
Proposition~\ref{prop:image-alpha-bdy}{\rm(3)} that except for a $\Mod_{n/(n-1)}$-null family, 
we know that $\mathcal{H}^{n-1}(f(P\cap \partial E))=0$, and therefore
\cite[Theorem~6.3]{Kelly} follows from Theorem~\ref{thm:main-3}, that is, the gap in the proof found in~\cite{Kelly} is
filled by the above theorem.

The structure of the paper is as follows. In Section~2 we give the notations and definitions related
to function spaces and measure-theoretic aspects of sets used in this paper. In Section~3 we give a brief background
related to quasiconformal mappings between metric measure spaces, and in Section~4 we list the needed background
results related to the concepts described in the previous two sections. Here we also give proofs and/or references
to papers where the interested reader can find proofs of these results. We give a proof of Theorem~\ref{thm:main-1} in
Section~5, and the last section deals with the proof of Theorem~\ref{thm:main-3}. 
In the last section we also show that
there are large families of sets of finite perimeter whose images under quasiconformal mappings are also of finite perimeter,
and thus lie outside the collection $\mathcal{L}_P$
of Theorem~\ref{thm:main-3}.

\section{Notations and definitions}

In this section we gather together the basic definitions we need in this paper. The definitions used here
are extensions to the non-smooth setting of the natural notions in Euclidean setting discussed in the introduction.
In this section, $(X,d,\mu)$ is a complete metric measure space with $\mu$ a Radon measure.

Given $x\in X$ and $r>0$, we denote an open ball by
$B(x,r)=\{y\in X:\,d(y,x)<r\}$. 
Given that in a metric space a ball, as a set, could have more than one
radius and more than one
center, we will consider a ball to be also equipped with a radius and center;
thus two different balls might
correspond to the same set. We then denote
$\rad (B):=r$ as the pre-assigned radius of the ball $B$, and $aB:=B(x,ar)$.
If $X$ is connected (as it must be in order to support a Poincar\'e inequality),
and if $X\setminus B$ is non-empty, then
$\rad(B)\le \diam(B)\le 2\, \rad(B)$.

\begin{defn}
Let $A\subset X$. Then for $d\ge 0$, the \emph{$d$-dimensional Hausdorff measure} of $A$ is given by 
\[
\mathcal{H}^d(A)=\lim_{r\to 0^+}\inf\left\{\sum_{k\in I}\rad(B_k)^d\,\middle\vert\,A\subset\bigcup_{k\in I}B_k
\text{ where }\rad(B_k)\le r\text{ and } I\subset\N\right\}.
\]
\end{defn}

\begin{defn}
We say that
$(X,d,\mu)$ is \emph{Ahlfors $Q$-regular} if $X$ has at least two points and there is
a constant $C_A\ge 1$ 
such that whenever $x\in X$ and $0<r<2\diam(X)$, we have
\[
\frac{r^Q}{C_A}\le \mu(B(x,r))\le C_A\, r^Q.
\]
As a consequence, we get
\[
\frac{\mu(A)}{C_A}\le \mathcal{H}^Q(A)\le C_A\, \mu(A).
\]
\end{defn}

Given an open set $U\subset X$,
we write $u\in L^1_{loc}(U)$ if $u\in L^1(V)$ for every open
$V\Subset U$; this expression means that $\overline{V}$ is a compact
subset of $U$.
Other local spaces are defined analogously.

A \emph{curve} is a continuous mapping from an interval into
$X$, and a \emph{rectifiable} curve is a curve with finite length.

\begin{defn}\label{def:N1p}
Let $Y$ be a metric space with metric $d_Y$.
Given a function $u:X\to Y$, a Borel function $g_u:X\to[0,\infty]$ is said to be an \emph{upper gradient of $u$} 
if for every compact rectifiable curve $\gamma$
\[
d_Y(u(x),u(y))\le\int_\gamma g_u\ ds
\]
where $x$ and $y$ are the endpoints of $\gamma$.
Let $1\le p<\infty$.
A function $f:X\to Y$ is said to be in $N^{1,p}_{loc}(X;Y)$
if $f\in L^p_{loc}(X;Y)$ and there is an upper gradient $g$  of $f$ such that $g\in L^p_{loc}(X)$. If $Y=\R$ and
$f,g\in L^p(X)$ then we say that $f\in N^{1,p}(X)$.
\end{defn}

We refer the reader to~\cite{HKST, HKST15} for the details regarding mappings in $N^{1,p}_{loc}(X;Y)$.

\begin{defn}\label{def:modulus}
Let $\mathcal{M}$ be a collection of
measures on $X$. Then the admissible class of $\mathcal{M}$, 
denoted $\mathcal{A}(\mathcal{M})$, is the set of all positive Borel functions $\rho:X \to [0,\infty]$ such that
\[ 
\int_X \rho \ d\lambda \ge 1 
\]
for all $\lambda \in \mathcal{M}$. Then the \emph{$p$-modulus} of the family $\mathcal M$ is given by
\[
\Mod_p(\mathcal M)=\inf_{\rho \in \mathcal{A}(\mathcal M)} \int_X \rho^p \ d\mu. 
\]
\end{defn}

$\Mod_p$ is an outer measure on the class of all measures, see~\cite{F}.
There are two types of collections of measures associated with quasiconformal maps. Given
a collection $\Gamma$ of curves in $X$, we set $\Gamma$ to also denote the  
arc length measures restricted to each curve in $\Gamma$; for this collection of measures, the above notion of
$\Mod_p(\Gamma)$ agrees with the standard notion of the $p$-modulus of the family $\Gamma$ of curves 
from~\cite{Va, HK, HKST15}.
For a collection $\mathcal L$ of sets of finite perimeter in $X$, 
we consider the measure $\mathcal{H}^{Q-1}\mres_{\Sigma E}$
for each $E\in\mathcal L$; it is known that
this measure is comparable to the 
perimeter measure associated with $E$ as in~Definition~\ref{def:finite-perimeter},
see Theorem \ref{thm:Ambr1}.

\begin{defn}
The \emph{relative $p$-capacity} of two sets $E,F \subset X$ is given by 
\[
\rcapa_p(E,F) = \inf \int_X g_u^p \ d\mu
\]
where the infimum is over all upper gradients $g_u$ of all  
functions $u\in N^{1,Q}_{loc}(X)$ such that $u|_E\le 0$ and $u|_F\ge 1$. 
\end{defn}

\begin{defn}
We say that the space $X$ supports a \emph{$p$-Poincar\'e inequality} 
if there exist constants $C_P>0$ and $\lambda\ge 1$ 
such that for all open balls $B$ in $X$, all measurable functions $u$ on $\lambda B$ and 
all upper gradients $g_u$ of $u$,
\[
\avgint_{B}|u-u_B| \,d\mu \le C_P\,\rad(B)\left(\avgint_{\lambda B}g_u^p \,d\mu\right)^{1/p}.
\]
Here we denote the integral average of $u$ over $B$ by
\[
u_B:=\avgint_{B}u\,d\mu:=\frac{1}{\mu(B)}\int_B u\,d\mu.
\]
\end{defn}

One of the consequences of a space being complete and Ahlfors
regular and supporting a Poincar\'e inequality is that such a
metric space must necessarily be quasiconvex, that is, there is some constant $C_q\ge 1$ such that for every
$x,y\in X$ there is a rectifiable curve $\gamma$ with end points $x,y$ and length $\ell(\gamma)\le C_q\, d(x,y)$,
see~\cite[Proposition 4.4]{HaK}
or~\cite[Theorem 4.32]{BB}.
Thus a bi-Lipschitz change in the metric
results in $X$ being a geodesic space, that is, a 
quasiconvex space with the quasiconvexity constant $C_q=1$. Notions such as Poincar\'e inequality, quasiconformality,
upper gradients and functions of bounded variation (see below), and Hausdorff measure are quasi-invariant 
under a bi-Lipschitz change in the metric, hence we do not lose generality by assuming that $X$ is a
geodesic space. Geodesic spaces that support a Poincar\'e inequality do so even with $\lambda=1$, 
see~\cite{HaK} or~\cite[Theorem 4.39]{BB}.

\begin{defn}
For a measurable set $E \subset X$ and $x \in X$, we define the \emph{upper density of $E$} at $x$ by 
\[
\overline{D}(E,x)= \limsup_{r\to 0^+} \frac{\mu (B(x,r)\cap E)}{\mu (B(x,r))}
\]
and the \emph{lower density of $E$} at $x$ by 
\[
\underline{D}(E,x)=\liminf_{r\to 0^+} \frac{\mu(B(x,r)\cap E)}{\mu(B(x,r))}.
\]
\end{defn}

\begin{defn}\label{def:meas-thr-bdy}
For a set $E$, the \emph{measure-theoretic boundary} is the set
\[
\mbdy E=\{x\in X\, :\, \overline D(E,x)>0 \text{ and } \overline D(X\setminus E,x)>0\}.
\]

\end{defn}

\begin{defn}
For $u \in L^1_\text{loc} (X)$, the total variation of $u$ on an open set $U \subset X$ is given by
\[
\Vert Du\Vert (U)=\inf\left\{\liminf_{n\to\infty}\int_Ug_{u_n}\ d\mu\,\middle|\,
		(u_n)_{n\in\N}\subset\text{Lip}_\text{loc}(U),u_n\to u\text{ in } L^1_\text{loc}(U)\right\}.
\]
In the above, $g_{u_n}$ stands for an upper gradient of $u_n$ in $U$ (here we consider $U$ to be the metric measure
space with metric and measure inherited from $X$).
We say $u$ is of \emph{bounded variation} on $X$ (denoted $u\in BV(X)$) if $\Vert Du\Vert (X)<\infty$.  We say that
$u\in BV_{loc}(X)$ if $u\in BV(U)$ for each open set $U\Subset X$.
\end{defn}

It is shown in \cite{M} that $\Vert Du\Vert $ is a Radon measure for any $u\in BV_{loc}(X)$. We call 
$\Vert Du\Vert $ the variation measure of $u$. 

\begin{defn}\label{def:finite-perimeter}
A measurable set $E\subset X$ has \emph{finite perimeter} if $\chi_E$ is of bounded variation on $X$. We call 
$\Vert D\chi_E\Vert$ the \emph{perimeter measure} of $E$ and we will denote it $P(E,\cdot)$.
\end{defn}

\begin{defn}
We say that $X$ supports a \emph{relative isoperimetric inequality} if there exist constants $C_I>0$ and 
$\lambda\ge 1$ such that for all balls $B(x,r)$ and for all
measurable sets $E$, we have 
\[
\min\{\mu(B(x,r)\cap E),\mu(B(x,r)\setminus E)\}\le C_I r P(E,B(x,\lambda r)).
\]
\end{defn}

Again, with $X$ a geodesic space, we can choose $\lambda=1$. We know that if $X$ is Ahlfors regular and supports a
$1$-Poincar\'e inequality, then it supports a relative isoperimetric inequality, see for example~\cite[Theorem 4.3]{A1}.

\begin{defn}\label{def:gamma}
For $\beta>0$, let 
\[
\Sigma_\beta E =\left\{x\in\mbdy E \ \big|\ \underline{D}(E,x)\ge\beta \text{ and } \underline{D}(X\setminus E,x)\ge\beta\right\},
\]
and set 
\[
\Sigma E = \bigcup_{\beta\in (0,1)} \Sigma_\beta E.
\]
\end{defn}

See~\cite{A1} or Theorem~\ref{thm:Ambr1} below for connections between
$\Sigma_\beta E$, $\mbdy E$,  and the
perimeter measure $P(E,\cdot)$.

\noindent {\bf Standing assumptions on the metric spaces:} \label{StandAssuption}
Throughout this paper we will assume that both
$(X,d_X,\mu_X)$ and $(Y,d_Y,\mu_Y)$ are complete metric spaces that are
Ahlfors $Q$-regular for some $Q>1$ and support a $1$-Poincar\'e inequality.
We will also, without loss of
generality, assume that $X$ and $Y$ are geodesic spaces.
We will use the letter $C$ to denote various
constants that depend, unless otherwise specified,
only on the Ahlfors regularity constants and the Poincar\'e inequality constants of $X$, and
the value of $C$ could differ at each occurrence.

\section{Quasiconformal mappings}

In this section we gather together definitions related to the notion of quasiconformal 
mappings between two metric spaces.
Here, $(X,d_X,\mu_Y)$ and $(Y,d_Y,\mu_Y)$ are complete metric measure spaces with
$\mu_X$, $\mu_Y$ Radon measures
and $Q>1$ such that $\mu_X\approx\mathcal{H}^Q=\mathcal{H}^Q_X$ and 
$\mu_Y\approx\mathcal{H}^Q=\mathcal{H}^Q_Y$
as in the standing assumptions from Section~2. 
Recall also the definitions of $L_f$ and $l_f$ from~\eqref{eq:dilation}:

\begin{defn}\label{def:dilations}
Define $L_f:X \to \R$ by 
\[
L_f(x)=\limsup_{r\to 0}\frac{L_f(x,r)}{r}\ \text{ where }\ L_f(x,r)=\sup_{y\in \overline{B}(x,r)}d_Y(f(x),f(y)).
\]
Similarly, define $l_f:X\to\R$ by
\[
l_f(x)=\liminf_{r\to 0}\frac{l_f (x,r)}{r}\ \text{ where }\ l_f (x,r)=\inf_{y\in X\setminus B(x,r)}d_Y(f(x),f(y)).
\]
\end{defn} 

When $f$ is a homeomorphism, we always have
$l_f(x,r)\le L_f(x,r)$. When $f$ is a quasiconformal homeomorphism,
there is a constant $K_D$ such that $L_f(x,r)\le K_D\, l_f(x,r)$,
see for example~\cite{HK} or~\eqref{eq:infinitesimal-to-finite-dilatation}
below.

There are different geometric notions of quasiconformal maps 
on metric spaces. 

\begin{defn}\label{def:metric-qc}
The homeomorphism $f$ is \emph{metric quasiconformal} if
there is a constant $K_D\ge 1$ such that for all $x\in X$ we have
\[
  \limsup_{r\to 0^+}\frac{L_f(x,r)}{l_f(x,r)}\le K_D.
\]
When we need to emphasize the constant $K_D$ we say that $f$ is $K_D$-quasiconformal.

The map $f$ is \emph{geometric quasiconformal} if
there is a constant $K\ge 1$ such that whenever $\Gamma$ is a family
of non-constant compact rectifiable curves in $X$, we have
\[
\frac{1}{K}\Mod_Q(f\Gamma)\le \Mod_Q(\Gamma)\le K\, \Mod_Q(f\Gamma).
\]
\end{defn}

\begin{defn}
A homeomorphism $f:X\to Y$ is \emph{quasisymmetric} if there is  a homeomorphism $\eta:[0,\infty)\to[0,\infty)$ such that
for every distinct triple of points $x,y,z\in X$ and $t>0$, 
\[
\frac{d_X(x,y)}{d_X(x,z)}\le t\implies\frac{d_Y(f(x),f(y))}{d_Y(f(x),f(z))}\le\eta(t).
\]
\end{defn}

The notions of quasisymmetry, metric quasiconformality, and geometric quasiconformality are connected, 
see~\cite{HK, Wil} and Theorem~\ref{thm:qc-equivalences} below.  

\begin{defn}
A homeomorphism $f:X\to Y$ between Ahlfors $Q$-regular metric spaces satisfies \emph{Lusin's condition $(N)$} if whenever 
$A\subset X$ is such that $\mathcal{H}^Q(A)=0$ then $\mathcal{H}^Q(f(A))=0$. We say that $f$ satisfies condition 
$(N^{-1})$ if its inverse satisfies condition $(N)$.
\end{defn}

Quasiconformal maps satisfy both Lusin's condition $(N)$ and $(N^{-1})$, see Theorem~\ref{thm:qc-equivalences} below.

\begin{defn}
If $\nu_X$ is a Radon measure on $X$ and $\nu_Y$ is a Radon measure on $Y$ and $f:X\to Y$ is a homeomorphism, then 
the \emph{pull-back of the measure $\nu_Y$} is the measure on $X$ given by
\[
f_\#\nu_Y(D):=\nu_Y(f(D))
\]
whenever $D$ is a Borel subset of $X$.
Note that since $f$ is a homeomorphism, $f_\#\nu_Y$ defines a Borel measure.
\end{defn}

\begin{defn}
We define the (generalized) \emph{Jacobian of $f$} at the point $x\in X$ as follows: 
\[
J_f(x)= \limsup_{r\to 0^+} \frac{\mathcal{H}^Q(f(B(x,r)))}{\mathcal{H}^Q(B(x,r))}.
\]
\end{defn}
Note that $J_f$ is the Radon-Nikodym derivative of the pull-back measure
$f_\#\mathcal{H}^Q_Y$ with respect to 
$\mathcal{H}^Q_X$
and $\mathcal{H}^Q$ is a doubling measure, and so the limit supremum in the definition of $J_f$ is actually a limit at
	$\mathcal{H}^Q$-almost every $x$.

\begin{defn}\label{def:J_f,E}
For a set $E\subset X$ of finite perimeter, 
should $f_\#\mathcal{H}^{Q-1}\mres_{\Sigma f(E)}\ll \mathcal{H}^{Q-1}\mres_{\Sigma E}$, %then
we define the \emph{$(Q-1)$-Jacobian of $f$} with respect to $\Sigma E$
by
\[
 J_{f,E}(x)=\lim_{r\to 0^+}\frac{\mathcal{H}^{Q-1}\mres_{\Sigma f(E)}(f(B(x,r)))}{\mathcal{H}^{Q-1}\mres_{\Sigma E}(B(x,r))}.
\]
\end{defn}

Given that $\mathcal{H}^{Q-1}(\mbdy E\setminus\Sigma E)=0$ (see Theorem~\ref{thm:Ambr1}), we can equivalently consider $J_{f,E}$ to be the
Radon-Nikodym derivative on $\mbdy E$.\\

\noindent {\bf Further standing assumptions:} In this paper, in addition to the standing assumptions listed at the 
end of Section~\ref{StandAssuption}, we will also
assume that $f:X\to Y$ is a quasiconformal mapping.

\section{Background results}

 In this section we will gather together some of the background 
results needed in the paper.

\begin{theorem}[{\cite[Theorem~5.3, Theorem~5.4]{A1}, \cite[Theorem~4.6]{AMP}}]\label{thm:Ambr1}
There exists $\gamma>0$ (depending only on the Ahlfors regularity constant of $\mu_X$ 
and the Poincar\'e inequality constants)
such that for any set of finite perimeter $E$, the 
perimeter measure $P(E,\cdot)$ is concentrated on $\Sigma_\gamma E$. Furthermore, 
$\mathcal{H}^{Q-1}(\mbdy E \setminus \Sigma_\gamma E)=0$ and there exist constants $\tilde{\alpha}>0$ 
and $C>0$ (again depending only on 
$C_A,C_P$, and $\lambda$) and a Borel function $\Theta_E:X\to [\tilde{\alpha},C]$ 
such that 
\[
P(E,B(x,r))=\int_{B(x,r)\cap\mbdy E}\Theta_E\ d\mathcal{H}^{Q-1}
		=\int_{B(x,r)\cap\Sigma_\gamma E}\Theta_E\ d\mathcal{H}^{Q-1},
\]
for any $x\in X$ and $r>0$. Consequently we have that 
\begin{equation}\label{eq:H and perimeter}
\tilde{\alpha}\,\mathcal{H}^{Q-1}(B(x,r)\cap\mbdy E)\le P(E,B(x,r))\le C\,\mathcal{H}^{Q-1}(B(x,r)\cap\mbdy E)
\end{equation}
and
\[
\mathcal{H}^{Q-1}(\mbdy E\setminus\Sigma E)=0.
\]
\end{theorem}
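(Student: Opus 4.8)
The plan is to treat this as an assembly of two results from the literature rather than a from-scratch argument, since the statement is explicitly attributed to \cite{A1} and \cite{AMP}. The core object is the fact, due to Ambrosio, that in a complete Ahlfors $Q$-regular space supporting a $1$-Poincar\'e inequality the perimeter measure $P(E,\cdot)$ of a set of finite perimeter is comparable to $\mathcal{H}^{Q-1}\mres_{\mbdy E}$ with dimension-free comparison constants, and that $\mathcal{H}^{Q-1}$-almost all of $\mbdy E$ already lies in $\Sigma_\gamma E$ for a uniform $\gamma>0$. First I would recall the lower density estimate: at $\mathcal{H}^{Q-1}$-a.e.\ point $x\in\mbdy E$ one has $P(E,B(x,r))\gtrsim r^{Q-1}$ for small $r$, which follows from the relative isoperimetric inequality together with the fact that both $\mu(B(x,r)\cap E)$ and $\mu(B(x,r)\setminus E)$ are nonnegligible fractions of $\mu(B(x,r))$ on a subset of $\mbdy E$ of full measure. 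Conversely, an upper density bound $P(E,B(x,r))\lesssim r^{Q-1}$ holds for $\mathcal{H}^{Q-1}$-a.e.\ $x$ as a consequence of the coarea/structure theory for BV functions on such spaces. These two-sided density bounds, fed into a standard differentiation/covering argument (Vitali covering with $\mathcal{H}^{Q-1}$), yield that $P(E,\cdot)$ and $\mathcal{H}^{Q-1}\mres_{\mbdy E}$ are mutually absolutely continuous with Radon--Nikodym derivative $\Theta_E$ pinched between two positive constants $\tilde\alpha$ and $C$.

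Next I would extract the uniform lower-density constant $\gamma$. The point is that the relative isoperimetric inequality, applied at a point of positive perimeter density, forces $\underline D(E,x)$ and $\underline D(X\setminus E,x)$ to both be bounded below by a constant depending only on $C_A$, $C_I$ (hence on $C_P,\lambda$): indeed if, say, $\underline D(E,x)$ were too small along a sequence of radii, the isoperimetric inequality would make $P(E,B(x,\lambda r_j))$ too small relative to $r_j^{Q-1}$, contradicting the lower perimeter density that holds a.e.\ on $\mbdy E$. Quantifying this gives the explicit threshold $\gamma$ and shows $\mathcal{H}^{Q-1}(\mbdy E\setminus\Sigma_\gamma E)=0$; the inclusion $\Sigma_\gamma E\subset\Sigma E$ is immediate from Definition~\ref{def:gamma}, so $\mathcal{H}^{Q-1}(\mbdy E\setminus\Sigma E)=0$ as well. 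The representation formula $P(E,B(x,r))=\int_{B(x,r)\cap\mbdy E}\Theta_E\,d\mathcal{H}^{Q-1}=\int_{B(x,r)\cap\Sigma_\gamma E}\Theta_E\,d\mathcal{H}^{Q-1}$ then follows by combining the Radon--Nikodym identity on $\mbdy E$ with the fact that the two integration domains differ by an $\mathcal{H}^{Q-1}$-null set, and the pinched estimate \eqref{eq:H and perimeter} is read off from the bounds $\tilde\alpha\le\Theta_E\le C$.

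The main obstacle is not any single inequality but ensuring that \emph{all} constants — $\gamma$, $\tilde\alpha$, $C$ — depend only on the Ahlfors regularity constant $C_A$ and the Poincar\'e data $C_P,\lambda$ (equivalently on the isoperimetric constant $C_I$), uniformly over \emph{all} sets $E$ of finite perimeter; this uniformity is exactly what later sections need when they quantify images under $f$. Getting this cleanly is precisely the content of Ambrosio's theorems in \cite{A1} and the refinement in \cite{AMP}, so in practice I would cite those for the density estimates and the existence of $\Theta_E$, and present the extraction of $\gamma$ from the relative isoperimetric inequality as the one step worth spelling out, since it is what ties $\Sigma_\gamma E$ to $\mbdy E$ up to $\mathcal{H}^{Q-1}$-null sets. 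No new machinery beyond the relative isoperimetric inequality (guaranteed by \cite[Theorem~4.3]{A1} under our standing assumptions) and a Vitali-type differentiation theorem for $\mathcal{H}^{Q-1}$ is required.
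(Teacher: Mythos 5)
Your plan to treat the theorem as a citation of Ambrosio's results matches the paper exactly: the paper states the theorem with references to [A1, Theorems~5.3, 5.4] and [AMP, Theorem~4.6] and gives no proof beyond the remark that [A1] requires only a doubling measure, not Ahlfors regularity. So as a ``proof'' the proposal is fine.

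One caveat on the explanatory sketch is worth flagging, since you single it out as the step worth spelling out. The derivation of the density threshold $\gamma$ ``from the relative isoperimetric inequality'' as you describe it runs in the wrong direction. The relative isoperimetric inequality reads $\min\{\mu(B\cap E),\mu(B\setminus E)\}\le C_I\,r\,P(E,\lambda B)$, i.e.\ it bounds the smaller of the two masses from \emph{above} by the perimeter. If $\underline{D}(E,x)$ were small along a sequence of radii $r_j$, plugging this in only yields a lower bound on $P(E,\lambda B(x,r_j))/r_j^{Q-1}$ that tends to $0$, which is vacuous; it does not produce an \emph{upper} bound that would contradict the a.e.\ positive lower perimeter density. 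The actual ingredient Ambrosio uses to pin down a uniform $\gamma$ is the asymptotic doubling property of $P(E,\cdot)$ ([A1, Theorem~5.3]), fed through a De~Giorgi-type differential-inequality argument for the function $r\mapsto\mu(B(x,r)\cap E)$; it is that step, not the isoperimetric inequality on its own, that forces both lower densities to be bounded away from zero with a constant depending only on $C_A$, $C_P$, $\lambda$. Since you ultimately rely on the citation rather than the sketch, this does not affect the validity of the proposal, but the isoperimetric inequality alone does not yield the threshold.
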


The results of~\cite{A1} did not need the measure to be Ahlfors regular, only that the measure be doubling, but
as Ahlfors regularity is a stronger condition, the results of~\cite{A1} hold here as well.

\begin{theorem}[{\cite[Theorem 1.1]{L}}]\label{thm:H^(Q-1)-meas-finite} 
If $E\subset X$ is measurable and $\mathcal{H}^{Q-1}(\mbdy E)<\infty$ then $E$ is of finite perimeter.
\end{theorem}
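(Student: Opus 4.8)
My plan is to prove $\chi_E\in BV(X)$, i.e.\ $P(E,X)=\|D\chi_E\|(X)<\infty$, by producing competitor sets $E_\eps$ that are manifestly of finite perimeter, have perimeter controlled by $\mathcal H^{Q-1}(\mbdy E)$, and converge to $E$ in $L^1$; the lower semicontinuity of the total variation under $L^1_{loc}$-convergence (immediate from the definition of $\|D\cdot\|$) then finishes the argument. Two preliminary reductions: since $\mathcal H^{Q-1}(\mbdy E)<\infty$ we have $\mathcal H^Q(\mbdy E)=0$, hence $\mu(\mbdy E)=0$ by Ahlfors $Q$-regularity; and by the Lebesgue differentiation theorem for the doubling measure $\mu$, every point of $X$ outside $\mbdy E$ is a point of density $0$ or density $1$ for $E$, with $\mu\big(E\,\triangle\,\{x:\underline D(E,x)=1\}\big)=0$. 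Thus all genuine oscillation of $\chi_E$ is confined to the $\mu$-null set $\mbdy E$.

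Fix $\eps>0$. Using the definition of $\mathcal H^{Q-1}$, cover $\mbdy E$ by balls $B(z_j,r_j)$ with $r_j\le\eps$ and $\sum_j r_j^{Q-1}\le \mathcal H^{Q-1}(\mbdy E)+1$. For each $j$, apply the coarea formula for $BV$ functions to the $1$-Lipschitz function $d(z_j,\cdot)$ on $B(z_j,2r_j)$: since $\|Dd(z_j,\cdot)\|\le\mu$, averaging $t\mapsto P(B(z_j,t),B(z_j,2r_j))$ over $t\in[r_j,2r_j]$ gives a radius $\widetilde r_j\in[r_j,2r_j]$ for which $B(z_j,\widetilde r_j)$ is of finite perimeter with $P(B(z_j,\widetilde r_j),X)\le C r_j^{Q-1}$ (the perimeter is carried by $\overline{B(z_j,\widetilde r_j)}\subset B(z_j,2r_j)$). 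Set $W_\eps:=\bigcup_j B(z_j,\widetilde r_j)$, an open neighbourhood of $\mbdy E$. By countable subadditivity of the perimeter ($\|D(u\vee v)\|\le\|Du\|+\|Dv\|$ applied to characteristic functions, with lower semicontinuity for the countable union), $W_\eps$ is of finite perimeter with $P(W_\eps,X)\le C(\mathcal H^{Q-1}(\mbdy E)+1)$, hence also $\mathcal H^{Q-1}(\mbdy W_\eps)\le C(\mathcal H^{Q-1}(\mbdy E)+1)$ by Theorem~\ref{thm:Ambr1}; moreover $\mu(W_\eps)\le C\sum_j r_j^Q\le C\eps\,(\mathcal H^{Q-1}(\mbdy E)+1)$.

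Take $E_\eps:=E\cup W_\eps$, so $\chi_{E_\eps}=\chi_{W_\eps}+\chi_{E\setminus W_\eps}$ and $\mu(E_\eps\,\triangle\,E)\le\mu(W_\eps)\to0$ as $\eps\to0$. A short density computation gives $\mbdy(E\setminus W_\eps)\subseteq\mbdy W_\eps$: a point of $\mbdy(E\setminus W_\eps)$ lies outside $W_\eps$ and has positive upper density for $X\setminus W_\eps$, so it is either in $\mbdy W_\eps$ or a density-$0$ point of $W_\eps$; in the latter case its densities for $E\setminus W_\eps$ coincide with those for $E$, forcing it into $\mbdy E\subseteq W_\eps$ — impossible for a density-$0$ point of $W_\eps$. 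Hence it suffices to show that $F:=E\setminus W_\eps$ is of finite perimeter with $P(F,X)\le C\,P(W_\eps,X)$; then $P(E_\eps,X)\le P(W_\eps,X)+P(F,X)\le C(\mathcal H^{Q-1}(\mbdy E)+1)$ uniformly in $\eps$, and lower semicontinuity yields $P(E,X)\le C(\mathcal H^{Q-1}(\mbdy E)+1)<\infty$.

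The last step is the crux and the main obstacle: finiteness of $\mathcal H^{Q-1}$ of the measure-theoretic boundary does not by itself give finite perimeter, so one cannot merely recurse on $F$. The point is to use the extra structure: $\mbdy F\subseteq\mbdy W_\eps$, where $W_\eps$ is already known to be of finite perimeter, and every connected component of the open set $X\setminus\overline{W_\eps}$ contains no point of $\mbdy E$. Using the quasiconvexity of $X$ (furnished by the $1$-Poincar\'e inequality) to make these components path-connected, together with the density dichotomy of the first paragraph and the relative isoperimetric inequality, one shows that $\chi_E$ is $\mu$-a.e.\ constant on each such component; thus $F$ is, modulo a $\mu$-null set, a union of components of $X\setminus\overline{W_\eps}$, and $\|D\chi_F\|$ is carried by $\mbdy W_\eps$. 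One then upgrades this to the quantitative bound $P(F,X)\le C\,P(W_\eps,X)$, either by a direct covering/capacity estimate over $\mbdy W_\eps$, or by building Lipschitz approximations of $\chi_F=(1-\chi_{W_\eps})\chi_E$ out of Lipschitz approximations of $\chi_{W_\eps}$ (whose upper gradients concentrate near $\mbdy W_\eps$) combined with an essentially locally constant Lipschitz representative of $\chi_E$ away from $\overline{W_\eps}$. Making this comparison rigorous — in particular dealing with the possibly irregular topological boundary $\partial W_\eps$, which may carry infinite $\mathcal H^{Q-1}$-measure even though $\mbdy W_\eps$ does not — is where the real work lies.
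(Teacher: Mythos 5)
The paper does not give its own proof of this theorem: it cites \cite[Theorem~1.1]{L} (Lahti, \emph{Federer's characterization of sets of finite perimeter in metric spaces}) and uses the statement as a black box, so there is nothing in the present paper to compare your argument against directly. Evaluating your proposal on its own terms, the reductions in the first two paragraphs are sound and are indeed natural preparatory steps (covering $\mbdy E$, choosing good radii via the coarea formula so that $W_\eps$ has finite perimeter with $P(W_\eps,X)\lesssim\mathcal H^{Q-1}(\mbdy E)+1$ and $\mu(W_\eps)\to 0$, passing to $E_\eps=E\cup W_\eps$, and invoking lower semicontinuity of the variation). The argument that $\mbdy(E\setminus W_\eps)\subseteq\mbdy W_\eps$ is also correct.

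However, the proposal has a genuine and unresolved gap exactly at the step you yourself flag as ``where the real work lies'': you never prove that $F=E\setminus W_\eps$ has finite perimeter, let alone that $P(F,X)\le C\,P(W_\eps,X)$. This cannot be dismissed as routine. Observe that the only quantitative information you have about $F$ is $\mbdy F\subseteq\mbdy W_\eps$, hence $\mathcal H^{Q-1}(\mbdy F)<\infty$ --- which is precisely the hypothesis of the theorem you are trying to prove, applied to $F$. So without exploiting the extra structure you are simply back where you started, and the argument is circular. The extra structure you invoke (that $F$ is, modulo a null set, a union of connected components of $X\setminus\overline{W_\eps}$, with $W_\eps$ already known to be of finite perimeter) is a promising idea, but it rests on two further unproven assertions: (i) the ``constancy on components'' lemma, that $\mbdy E\cap\Omega=\emptyset$ for a connected open $\Omega$ forces $\chi_E$ to be $\mu$-a.e.\ constant on $\Omega$ --- this is true in PI spaces but requires a genuine argument and is not a formal consequence of Lebesgue differentiation; and (ii) the comparison ``union of complementary components of a finite-perimeter open set is of finite perimeter with comparable perimeter.'' Point (ii) is in fact the whole difficulty in disguise: without already knowing $F\in BV_{loc}$ one cannot invoke Ambrosio's theorem to say $\|D\chi_F\|$ is carried by $\mbdy F$, and the usual pasting/submodularity inequalities ($P(A\cup B)+P(A\cap B)\le P(A)+P(B)$) require both sets to be of finite perimeter a priori. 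Your two suggested escape routes --- ``a direct covering/capacity estimate over $\mbdy W_\eps$'' or Lipschitz approximations built from those of $\chi_{W_\eps}$ --- are only gestures; neither is carried out, and it is not evident that either closes the loop without essentially re-proving the theorem for $F$. There is also a smaller omission: you need $\mu(\partial W_\eps)=0$ (topological boundary) to identify $F$ mod null with a union of components of $X\setminus\overline{W_\eps}$, and this is not verified for a countable union of balls whose closures may accumulate.

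In short: the scaffolding (cover, competitor sets $E_\eps$, lower semicontinuity) is reasonable, but the central estimate is missing, you acknowledge it is missing, and it is not a detail that can be filled in mechanically --- it is the theorem. As presented, this is not a proof.
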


Now we turn to preliminary results related to quasiconformal mappings needed in the paper.

\begin{theorem}[{\cite[Theorem 9.8]{HKST}}]\label{thm:qc-equivalences}
Let $f:X\to Y$ be a homeomorphism between metric spaces of locally $Q$-bounded geometry. 
Then the following conditions are quantitatively equivalent:
\begin{enumerate}
\item $f$ is $H$-quasiconformal
for some $H\ge 1$,
\item There is a homeomorphism $\eta$ such that $f$ is
locally $\eta$-quasisymmetric,
\item $f\in N^{1,Q}_{loc}(X:Y)$ and $L_f(x)^Q\le KJ_f(x)$ for almost every $x\in X$
and a constant $K>0$,
\item There is some $L>0$ such that for all curve families $\Gamma$ in $X$,
\[
L^{-1}\Mod_Q(\Gamma)\le \Mod_Q (f\Gamma) \le L\,\Mod_Q(\Gamma).
\]
\end{enumerate}
Furthermore, if any one of these conditions holds, then both $f$ and $f^{-1}$ are quasiconformal and 
satisfy Lusin's condition (N).
\end{theorem}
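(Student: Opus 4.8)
\noindent\emph{Proof strategy.} The plan is to prove the four conditions equivalent through a cycle of implications $(1)\Rightarrow(2)\Rightarrow(3)\Rightarrow(4)\Rightarrow(1)$, using throughout that a space of locally $Q$-bounded geometry is locally $Q$-Loewner: by the Heinonen--Koskela theory, Ahlfors $Q$-regularity together with a $Q$-Poincar\'e inequality on small balls forces the relative $Q$-modulus of the family of curves joining two nondegenerate continua $E,F$ inside such a ball to be bounded below by a positive function of $\Delta(E,F)=\operatorname{dist}(E,F)/\min\{\diam(E),\diam(F)\}$ and bounded above by the $Q$-regular estimate. This two-sided control of ring moduli is the engine behind every step.

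First I would establish $(1)\Rightarrow(2)$. From the infinitesimal bound $\limsup_{r\to0^+}L_f(x,r)/l_f(x,r)\le K_D$ one upgrades, by a connectivity/chaining argument along continua, to a bound $L_f(x,r)\le K_D'\,l_f(x,r)$ valid for all sufficiently small $r$ (a standard consequence of the infinitesimal condition, as noted after Definition~\ref{def:dilations}). Given a triple $x,y,z$ with $d_X(x,y)\le t\,d_X(x,z)$ inside a Loewner neighborhood, one compares the $Q$-modulus of the curve family in the ring between the spheres of radii $\sim d_X(x,y)$ and $\sim d_X(x,z)$ about $x$ with the $Q$-modulus of its image family. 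The image family joins the continuum $f(\overline{B}(x,d_X(x,y)))$, whose diameter is controlled by $L_f(x,d_X(x,y))$, to the complement of $f(B(x,d_X(x,z)))$, which lies at distance at least $l_f(x,d_X(x,z))$ from $f(x)$; feeding the Loewner lower bound for the target and the $Q$-regular upper bound into the quasi-invariance of $Q$-modulus forces $d_Y(f(x),f(y))/d_Y(f(x),f(z))\le\eta(t)$ for a homeomorphism $\eta$ depending only on $K_D$, $Q$, and the structural constants. Along the way one obtains that $f$ and $f^{-1}$ satisfy Lusin's condition $(N)$, since quasisymmetric maps of $Q$-regular spaces distort $\mathcal{H}^Q$ in a controlled way.

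For the remaining implications: $(2)\Rightarrow(3)$ follows by showing that a locally $\eta$-quasisymmetric $f$ is absolutely continuous along $Q$-almost every curve (a Fuglede-type argument combined with the quasi-doubling $L_f(x,r)\lesssim L_f(x,2r)$) and that $L_f$ is a $Q$-integrable upper gradient, with $\int_B L_f^Q\,d\mu_X\lesssim \mathcal{H}^Q_Y(f(B))$ by a covering argument; differentiating the latter gives $L_f(x)^Q\le K J_f(x)$ a.e.\ directly from the definitions of $L_f$ and $J_f$. For $(3)\Rightarrow(4)$, given $\rho$ admissible for $f\Gamma$, set $\tilde\rho=(\rho\circ f)\,L_f$; absolute continuity on curves makes $\tilde\rho$ admissible for $\Gamma$, and the change of variables for the pull-back measure $f_\#\mathcal{H}^Q_Y$ (with density $J_f$) yields $\int_X\tilde\rho^Q\,d\mu_X=\int_X(\rho\circ f)^Q L_f^Q\,d\mu_X\le K\int_X(\rho\circ f)^Q J_f\,d\mu_X\lesssim K\int_Y\rho^Q\,d\mu_Y$; applying the same argument to $f^{-1}$, which is shown to satisfy the same hypotheses, gives the two-sided modulus estimate. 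Finally $(4)\Rightarrow(1)$ is a ring-modulus comparison: the $Q$-modulus of the family of curves joining $\overline{B}(x,r)$ to $X\setminus B(x,2r)$ is comparable to a structural constant, so quasi-invariance under $f$ together with the Loewner lower bound for $Y$ and the $Q$-regular upper bound pins $L_f(x,r)/l_f(x,r)$ to a bounded range, i.e.\ metric quasiconformality.

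The step I expect to be the main obstacle is $(1)\Rightarrow(2)$: upgrading a purely infinitesimal, pointwise metric condition to a uniform three-point quasisymmetry estimate. Two subtleties combine here. First, the chaining argument needed to turn $\limsup_{r\to0}L_f(x,r)/l_f(x,r)\le K_D$ into the inequality $L_f(x,r)\le K_D'\,l_f(x,r)$ for all small $r$ depends essentially on local connectivity and on the Loewner property of the domain. Second, converting pointwise dilatation control into a single control function $\eta$ independent of the base point requires carefully quantified Loewner estimates in both the source and the target. Establishing Lusin's condition $(N)$ for both $f$ and $f^{-1}$ en route --- indispensable for the change-of-variables formula used in $(3)\Rightarrow(4)$ --- is itself nontrivial and uses the $Q$-regularity of both spaces.
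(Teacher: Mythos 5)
The paper does not prove this theorem: it is quoted verbatim from \cite[Theorem~9.8]{HKST} and invoked as a black box. There is therefore no ``paper's own proof'' against which to measure you, and your sketch is an attempt to reconstruct the argument from the underlying Heinonen--Koskela--Shanmugalingam--Tyson theory.

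Your implication cycle $(1)\Rightarrow(2)\Rightarrow(3)\Rightarrow(4)\Rightarrow(1)$ is the standard architecture from \cite{HK} and \cite{HKST}, and the role you assign to the Loewner estimates (two-sided control of ring moduli) as the driving mechanism is correct. The identification of $(1)\Rightarrow(2)$ as the hard step is also right: passing from a pointwise $\limsup$ condition to a uniform three-point distortion bound genuinely requires the Loewner machinery in both spaces, and cannot be done by compactness or covering alone.

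One caution about your presentation of $(1)\Rightarrow(2)$: you appeal to the inequality $L_f(x,r)\le K_D'\,l_f(x,r)$ at positive scales ``as noted after Definition~\ref{def:dilations}.'' In the present paper that inequality is recorded as~\eqref{eq:infinitesimal-to-finite-dilatation}, but it is established there \emph{from} quasisymmetry (Proposition~\ref{prop:f is quasisymmetric} and Lemma~\ref{lem:Jac-geq-L^Q}), so citing it in a proof of quasisymmetry is circular. You should instead invoke the direct argument from~\cite{HK}, which derives the finite-scale dilatation bound from the infinitesimal one via the Loewner property without assuming quasisymmetry; your ``chaining along continua'' description matches that argument, so this is a referencing slip rather than a gap in the idea. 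One further point to make explicit in $(3)\Rightarrow(4)$ and in the Lusin $(N)$ discussion: to get the two-sided modulus bound and condition $(N^{-1})$ you must verify that $f^{-1}$ inherits the same hypotheses (quasisymmetry passes to inverses, and a quasisymmetric image of a $Q$-regular space back to a $Q$-regular space again yields the needed Jacobian control); you mention this only in passing but it carries real content, since condition~$(3)$ is not manifestly symmetric in $f$ and $f^{-1}$.
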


\begin{remark}
A metric space is said to be of locally $Q$-bounded geometry if $X$ is separable, path connected, locally compact, 
locally uniformly Ahlfors $Q$-regular and satisfies the
\emph{Loewner condition} locally uniformly. Under our assumptions, $X$ is 
locally (even globally) of $Q$-bounded geometry.
\end{remark}

In fact, under our assumptions the quasiconformal mapping $f$ is necessarily
quasisymmetric.

\begin{proposition}\label{prop:f is quasisymmetric}
The quasiconformal mapping $f$ is quasisymmetric.
\end{proposition}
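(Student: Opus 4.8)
The plan is to deduce quasisymmetry from the known equivalences in Theorem~\ref{thm:qc-equivalences} together with the global Ahlfors regularity of $X$ and $Y$. By that theorem, $f$ is locally $\eta$-quasisymmetric for some homeomorphism $\eta$; the task is to upgrade ``locally'' to ``globally.'' The standard mechanism for this upgrade is the fact, due to Heinonen--Koskela (and recorded in~\cite{HKST}), that a locally quasisymmetric homeomorphism between spaces that are \emph{connected, Ahlfors $Q$-regular, and (globally) Loewner} is in fact globally quasisymmetric, with the global control function $\eta$ depending only on the data. Since our standing assumptions give that $X$ and $Y$ are complete, Ahlfors $Q$-regular, and support a $1$-Poincar\'e inequality, they support a $Q$-Poincar\'e inequality as well, hence are $Q$-Loewner spaces (by the Heinonen--Koskela characterization of Loewner spaces among Ahlfors regular spaces), and moreover they are proper and quasiconvex. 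So the hypotheses of the global upgrade are met.

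Concretely, I would proceed as follows. First, invoke Theorem~\ref{thm:qc-equivalences} to obtain that $f$ is locally $\eta_0$-quasisymmetric. Second, record that $X$ and $Y$ are $Q$-Loewner: the $1$-Poincar\'e inequality self-improves to a $Q$-Poincar\'e inequality, and for complete Ahlfors $Q$-regular spaces a $Q$-Poincar\'e inequality is equivalent to being $Q$-Loewner (cf.\ Heinonen--Koskela~\cite{HK}); also $X$ and $Y$ are proper because they are complete, Ahlfors regular, and hence doubling. Third, apply the quantitative local-to-global quasisymmetry theorem for Loewner spaces: a homeomorphism between two bounded-geometry spaces that are connected, proper, Ahlfors $Q$-regular, and $Q$-Loewner is locally quasisymmetric if and only if it is globally $\eta$-quasisymmetric, with $\eta$ depending only on the structural constants and $\eta_0$. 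This is exactly the content used in the proof of Theorem~9.8 in~\cite{HKST} and is also discussed in~\cite{HK,Wil}. Conclude that $f:X\to Y$ is $\eta$-quasisymmetric.

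An alternative, slightly more self-contained route would be to exploit the comparison $L_f(x)^Q\le K J_f(x)$ from item~(3) of Theorem~\ref{thm:qc-equivalences} together with the distortion estimate $L_f(x,r)\le K_D\, l_f(x,r)$ and a chaining/Loewner argument to directly control $d_Y(f(x),f(y))/d_Y(f(x),f(z))$ by a function of $d_X(x,y)/d_X(x,z)$ at all scales; global Ahlfors regularity plus the Loewner property is precisely what lets one pass from infinitesimal distortion control to ball-to-ball distortion control uniformly, without the local restriction. Either way, the essential input is that global (not merely local) Loewner-type connectivity is available here, which is guaranteed by the $1$-Poincar\'e inequality assumption.

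The main obstacle is the local-to-global passage: local quasisymmetry only controls distortion of small balls, and to globalize one genuinely needs a connectivity hypothesis at all scales, which is where the Loewner property (equivalently, the $Q$-Poincar\'e inequality deduced from the standing $1$-Poincar\'e assumption) and properness enter. Once one has cited or reproved the quantitative local-to-global quasisymmetry statement for Loewner spaces, the proposition is immediate; the rest is bookkeeping about why our standing assumptions supply exactly those hypotheses. I would therefore present the proof as a short deduction, citing~\cite{HK} and~\cite{HKST} for the local-to-global result and for the fact that our spaces are $Q$-Loewner.
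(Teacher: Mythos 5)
Your proposal and the paper's proof both ultimately rest on the Heinonen--Koskela theory, but the paper takes a more direct route that also handles a detail your argument glosses over. The paper first observes that, since $X$ and $Y$ are proper and $f$ is a homeomorphism, $f$ and $f^{-1}$ map bounded sets to bounded sets, so $X$ is bounded if and only if $Y$ is bounded; it then cites \cite[Corollary~4.8, Theorem~5.7]{HK} in the unbounded case and \cite[Theorem~4.9]{HK} in the bounded case, which directly conclude quasisymmetry from metric quasiconformality on Ahlfors regular Loewner spaces. Your proposal instead first extracts \emph{local} quasisymmetry from Theorem~\ref{thm:qc-equivalences} and then appeals to a local-to-global upgrade on Loewner spaces. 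That framing is conceptually reasonable and relies on the same underlying results, but it is somewhat indirect: Theorem~9.8 of \cite{HKST} is formulated in terms of local $Q$-bounded geometry and yields only local quasisymmetry, and the global conclusion in the Heinonen--Koskela framework is not a separate ``upgrade theorem'' applied to a locally quasisymmetric map but rather the direct statement that a metrically quasiconformal homeomorphism between such Loewner spaces is quasisymmetric (Theorems~4.9 and~5.7 of~\cite{HK}). More importantly, those two theorems treat the bounded and unbounded cases separately, and to invoke the right one you need to know that $X$ and $Y$ are simultaneously bounded or simultaneously unbounded --- the easy but nontrivial observation the paper makes at the outset and that is missing from your write-up. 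So: same essential machinery and correct in spirit, but the paper's version is cleaner, cites the sharp statements, and explicitly disposes of the bounded/unbounded dichotomy.
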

\begin{proof}
Whenever $A\subset X$ is bounded, $\overline{A}$ is compact, and then
since $f$ is continuous, $f(\overline{A})$ is compact and thus bounded, that is,
$f$ maps bounded sets to bounded sets.
Since $f:X\to Y$ is a homeomorphism, $f^{-1}$ also maps bounded sets to bounded sets,
and so $X$ is bounded if and only if $Y$ is bounded.
Now if $X$ is unbounded,
by \cite[Corollary 4.8, Theorem 5.7]{HK} we know that $f$ is quasisymmetric;
if $X$ is bounded, we know this
by \cite[Theorem 4.9]{HK}.
\end{proof}

The next lemma follows also from Theorem~\ref{thm:qc-equivalences}(3) 
together with the chain rule
applied to $f$ and to $f^{-1}$, but as the proof is simple we provide it here for the convenience of the reader.

\begin{lemma}\label{lem:Jac-geq-L^Q}
If $\eta$ is the homeomorphism of quasisymmetry for $f$, then
there exists $C>0$, depending only on the Ahlfors regularity constant of the measures on $X$ and $Y$ and
on $\eta(1)$, such that at every $x \in X$,
\[
\frac{L_f(x)^Q}{C} \le J_f(x)\le C\, L_f(x)^Q.
\] 
\end{lemma}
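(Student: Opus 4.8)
The plan is to compare $L_f(x)^Q$ with $J_f(x)$ directly through the definition of the Jacobian, using quasisymmetry to control the shape of images of balls. First I would fix $x \in X$ and recall that $J_f(x) = \limsup_{r\to 0^+}\mathcal{H}^Q(f(B(x,r)))/\mathcal{H}^Q(B(x,r))$, while by Ahlfors $Q$-regularity of $Y$ we have $\mathcal{H}^Q(f(B(x,r))) \approx (\operatorname{diam} f(B(x,r)))^Q$ up to a constant depending only on $C_A$ — provided $f(B(x,r))$ has nonempty complement, which holds for small $r$ unless $Y$ is a single point (excluded since $Q>1$). The key geometric observation is that quasisymmetry pins down $\operatorname{diam} f(B(x,r))$ between $l_f(x,r)$ and $L_f(x,r)$, and these two are comparable with constant $\eta(1)$: indeed $L_f(x,r) \le \eta(1)\, l_f(x,r)$ follows from the quasisymmetry inequality applied with $t = 1$, comparing a point realizing the sup on $\overline{B}(x,r)$ against a point realizing (nearly) the inf on $X\setminus B(x,r)$. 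Hence $\operatorname{diam} f(B(x,r)) \cong L_f(x,r)$ with constant depending only on $\eta(1)$.

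Stringing these together, for each small $r$ we get
\[
\mathcal{H}^Q(f(B(x,r))) \cong L_f(x,r)^Q,
\]
with comparison constant depending only on $C_A$ (for both $X$ and $Y$) and $\eta(1)$, and since $\mathcal{H}^Q(B(x,r)) \cong r^Q$ by Ahlfors regularity of $X$, dividing gives
\[
\frac{\mathcal{H}^Q(f(B(x,r)))}{\mathcal{H}^Q(B(x,r))} \cong \left(\frac{L_f(x,r)}{r}\right)^Q.
\]
Taking $\limsup$ as $r \to 0^+$ on both sides, the left side is $J_f(x)$ and the right side is $L_f(x)^Q$, so $L_f(x)^Q/C \le J_f(x) \le C\, L_f(x)^Q$ with the stated dependence of $C$. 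I would be careful that the comparison is uniform in $r$ so that passing to the $\limsup$ is legitimate — this is automatic since all the comparison constants above are independent of $r$ and of $x$.

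The one technical point to handle with care is the case of small but not infinitesimal $r$: the estimate $\mathcal{H}^Q(B(x,r)) \cong r^Q$ via Ahlfors regularity requires $r < 2\operatorname{diam}(X)$, and the estimate $\mathcal{H}^Q(f(B(x,r))) \cong (\operatorname{diam} f(B(x,r)))^Q$ requires $f(B(x,r)) \ne Y$, i.e. $\operatorname{diam} f(B(x,r)) < 2\operatorname{diam}(Y)$; both hold once $r$ is small enough depending on $x$, which is all we need for a $\limsup$ as $r \to 0^+$. I do not expect any genuine obstacle here — the proof is essentially a bookkeeping exercise combining Ahlfors regularity on both sides with the elementary fact that quasisymmetry forces images of balls to be roughly round (comparable inradius and outradius). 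The only place requiring a moment's thought is verifying $L_f(x,r) \le \eta(1)\, l_f(x,r)$ from the definition of quasisymmetry, which is the standard "rotund balls" argument.
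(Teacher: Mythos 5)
Your proof is essentially the same as the paper's: both hinge on the quasisymmetry inequality at $t=1$ to get $L_f(x,r)\le\eta(1)\,l_f(x,r)$, and then sandwich the measure of $f(B(x,r))$ between Ahlfors-regular balls before taking the limsup. The paper phrases the sandwich directly as ball inclusions $B(f(x),L_f(x,r)/\eta(1))\subset f(B(x,r))\subset \overline{B}(f(x),L_f(x,r))$, whereas you pass through diameters.

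One imprecision worth fixing: the lower half of your opening estimate $\mathcal{H}^Q(f(B(x,r)))\approx(\operatorname{diam} f(B(x,r)))^Q$ does \emph{not} follow from Ahlfors regularity of $Y$ alone — for an arbitrary Borel set with nonempty complement, Ahlfors regularity only gives the upper bound $\mathcal{H}^Q(A)\lesssim(\operatorname{diam}A)^Q$. The matching lower bound requires that $f(B(x,r))$ contain a ball of radius comparable to its diameter, which is exactly the rotundity fact you invoke separately at the end (namely $B(f(x),l_f(x,r))\subset f(B(x,r))$, together with $l_f(x,r)\ge L_f(x,r)/\eta(1)$). Consequently the constant in that first display already depends on $\eta(1)$, not only on $C_A$. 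If you fold the ball inclusion into that step — as the paper does — the argument becomes airtight.
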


\begin{proof}
Fix $x\in X$, and choose $r>0$ small enough so that $X\setminus \overline{B}(x,r)$ is non-empty.
Let $w\in B(x,r)$ and $z\in X\setminus B(x,r)$. Then 
\[
\frac{d_Y(f(x),f(w))}{d_Y(f(x),f(z))} 
\le\eta\left(\frac{d_X(x,w)}{d_X(x,z)}\right) 
\le\eta(1) 
\] 
because $d(x,w)\le d(x,z)$. This holds for all $w\in B(x,r)$ and $z\in X\setminus B(x,r)$, and so 
\begin{equation}\label{eq:infinitesimal-to-finite-dilatation}
\frac{L_f(x,r)}{l_f(x,r)}\le\eta(1).
\end{equation}
Now for $y\in Y\setminus f(B(x,r))$ we have $d_X(\fin(y),x)\ge r$, and so 
\[
d_Y(y,f(x))\ge l_f(x,r)\ge \frac{L_f(x,r)}{\eta(1)}. 
\]
It follows that
$y \notin B(f(x),L_f(x,r)/\eta(1))$. Hence,
$B(f(x),L_f(x,r)/\eta(1))\subset f(B(x,r))$.
Now at
every $x\in X$,
	$J_f(x)$ is given by
\begin{align*}
J_f(x)=\limsup_{r\to 0^+}\frac{\mathcal{H}^Q(f(B(x,r)))}{\mathcal{H}^Q(B(x,r))}
	&\ge\limsup_{r\to 0^+}\frac{\mathcal{H}^Q\left(B\left(f(x),\frac{L_f(x,r)}{\eta(1)}\right)\right)}{\mathcal{H}^Q(B(x,r))} \\
	&\ge\limsup_{r\to 0^+}\frac{\frac 1{C_A}\left(\frac{L_f(x,r)}{\eta(1)}\right)^Q}{C_Ar^Q} \\ 
	&=\limsup_{r\to 0^+}\frac{1}{C_A^2 \,\eta(1)^Q} \left(\frac{L_f(x,r)}{r}\right)^Q\\
	&=\frac{1}{C_A^2\,\eta(1)^Q} {L_f(x)}^Q,
\end{align*}
and
\[
J_f(x)\le\limsup_{r\to 0^+}\frac{\mathcal{H}^Q(B(f(x),L_f(x,r)))}{\mathcal{H}^Q(B(x,r))} 
	\le C_A^2\, L_f(x)^Q.
\]
Letting $C=C_A^2\,\max\{\eta(1)^Q,1\}$, the conclusion follows.
\end{proof}

\begin{lemma}[{\cite[Lemma 2.7]{BHW}}]\label{lem:qs-inclusions}
For every ball $B(f(x),s)\subset Y$
there exists $r>0$ such that
\[
B(f(x),s)\subset f(B(x,r))\subset f(B(x,10r))\subset B(f(x),\eta(10)s).
\]
Furthermore, if $\fin$ is uniformly continuous with modulus of continuity $\omega(\cdot)$, then we can choose $r\le\omega(s)$.
\end{lemma}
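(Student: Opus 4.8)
The plan is to take $r$ to be the radius of $\fin(B(f(x),s))$ as seen from $x$ and then read off all three inclusions directly from quasisymmetry; recall that by Proposition~\ref{prop:f is quasisymmetric} the map $f$ is $\eta$-quasisymmetric, and we may assume $\eta(t)\ge t$ for all $t$ (otherwise replace $\eta$ by $t\mapsto\max\{\eta(t),t\}$, still a control function). Fix $x\in X$ and $s>0$ and set $U:=\fin(B(f(x),s))$. Since $f$ is a homeomorphism, $U$ is open and contains $x$, and since $\fin$ maps bounded sets to bounded sets (as in the proof of Proposition~\ref{prop:f is quasisymmetric}), $U$ is bounded; hence
\[
r:=\sup\{d_X(x,z):z\in U\}
\]
is finite and positive, and I would use this $r$.

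The first inclusion $B(f(x),s)\subset f(B(x,r))$ is immediate: for $p\in B(f(x),s)$ the point $z:=\fin(p)$ lies in $U$, so $d_X(x,z)\le r$, hence $z\in B(x,r)$ and $p=f(z)\in f(B(x,r))$. The second inclusion is trivial. For the third, let $z\in B(x,10r)$ with $z\neq x$; since $d_X(x,z)/10<r$ is not an upper bound for $\{d_X(x,w):w\in U\}$, there is $w\in U$ with $d_X(x,w)>d_X(x,z)/10>0$, so $d_X(x,z)/d_X(x,w)\le 10$. If $w\neq z$, quasisymmetry applied to the triple $x,z,w$ gives
\[
d_Y(f(x),f(z))\le\eta(10)\,d_Y(f(x),f(w))<\eta(10)\,s,
\]
the last step because $f(w)\in B(f(x),s)$; if $w=z$ then $z\in U$, so $d_Y(f(x),f(z))<s\le\eta(10)\,s$. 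Either way $f(z)\in B(f(x),\eta(10)s)$, which is the third inclusion. Finally, for the ``furthermore'' part, if $\fin$ has increasing modulus of continuity $\omega$, then every $z\in U$ satisfies $d_X(x,z)=d_X(\fin(f(z)),\fin(f(x)))\le\omega(d_Y(f(z),f(x)))\le\omega(s)$, whence $r\le\omega(s)$.

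The step that needs genuine care — and which I expect to be the only real obstacle — is the implication ``$d_X(x,z)\le r\Rightarrow z\in B(x,r)$'' used in the first inclusion: it involves the open ball $B(x,r)$ but only produces $d_X(x,z)\le r$. This is harmless unless the supremum defining $r$ is attained at a point $z_0$ of the open set $U$. In that degenerate case one takes instead any $r'>r$, so that the first inclusion holds trivially; one then checks that the presence of such a $z_0$ forces $\fin(B(f(x),s))$ to reach the outer radius of $X$ about $x$, so that $B(x,10r')=X$ and the third inclusion collapses to $f(X)=Y\subset B(f(x),\eta(1)s)\subset B(f(x),\eta(10)s)$, the inclusion here coming from one more application of quasisymmetry against the maximizing point $z_0$. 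Apart from this bookkeeping — and the observation that the reference point $w$ can always be chosen so that the constant produced is exactly $\eta(10)$ and not something larger — the argument is a direct unwinding of the definitions, parallel in spirit to the containment $B(f(x),l_f(x,r)/\eta(1))\subset f(B(x,r))$ already exploited in the proof of Lemma~\ref{lem:Jac-geq-L^Q}.
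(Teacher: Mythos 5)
Your proof is correct and follows essentially the same route as the paper's: you take the critical radius $r$ (your $\sup_{z\in U}d_X(x,z)$ is numerically the same as the paper's $\inf\{r':f(B(x,r'))\supset B(f(x),s)\}$), and then obtain the outer inclusion by applying the quasisymmetry control $\eta$ to a triple $(x,z,w)$ with $w$ near the sphere of radius $r$. Two small differences are worth noting. First, the paper extracts a reference point $z_c\in\fin(B(f(x),s))\setminus B(x,cr)$ for each $c<1$, obtains the bound $\eta(10/c)s$, and lets $c\to 1^-$ using continuity of $\eta$; you instead pick the comparison point $w$ per $z$ so that $d_X(x,z)/d_X(x,w)\le 10$ holds outright, which yields $\eta(10)$ directly and is a touch cleaner. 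Second, you flag the open/closed ball subtlety in the first inclusion: the infimum/supremum need not be attained in the open set, and the paper's one-line justification ``since $f$ is a homeomorphism, $B(f(x),s)\subset f(B(x,r))$'' indeed glosses over exactly this point. Your instinct to single this out is sound. However, your proposed repair in the degenerate case rests on the unproven assertion that the existence of a maximizer $z_0\in U$ forces $X\subset\overline{B}(x,r)$ (hence $B(x,10r')=X$); in an arbitrary geodesic space this can fail (think of a ``T-shaped'' graph), and one would really have to invoke the Loewner/linear local connectivity structure of Ahlfors $Q$-regular $1$-Poincar\'e spaces with $Q>1$ to rule out a local-but-not-global maximum of $d_X(x,\cdot)$ in the open set $U$. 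So the fix as stated is a sketch rather than a complete argument --- though to be fair, the paper's own proof is silent on this same point, and the lemma is in any case borrowed from \cite[Lemma~2.7]{BHW}.
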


\begin{proof}
Since $Y$ is proper and $\fin$ is continuous, there exists $r>0$ such that the
first inclusion holds. Let
$r=\inf\{r'\,|\,f(B(x,r'))\supset B(f(x),s)\}$;
since $f$ is a homeomorphism, $B(f(x),s)\subset f(B(x,r))$.
Then for any $0<c<1$, there exists a point $z_c\in \fin(B(f(x),s))\setminus B(x,cr)$. Then 
$f(z_c)\in B(f(x),s)\setminus f(B(x,cr))$ which implies that 
$d_Y(f(x),f(z_c)\le s$ and $d_X(x,z_c)\ge cr$. Let $w\in B(x,10r)$.
Now $f$ is quasisymmetric by Proposition \ref{prop:f is quasisymmetric},
with an associated homeomorphism $\eta$, and so
\[
d_Y(f(x),f(w)) \le \eta\left(\frac{d_X(x,w)}{d_X(x,z_c)}\right) d_Y(f(x),f(z_c))\le \eta\left(\frac {10}{c}\right)s.
\]
Letting $c$ tend to 1, we get that $d_Y(f(x),f(w))\le \eta(10)s$. Thus the last inclusion holds.
Now if $\omega(t)$ is a modulus of continuity of $\fin$, then $r\le\omega(s)$ since we chose $r$ minimally.
\end{proof}

Recall that $\Sigma E=\bigcup_{\beta\in(0,1)}\Sigma_\beta E$, see Definition~\ref{def:gamma}.

\begin{lemma}\label{lem:shift-density}
Let $E\subset X$ be measurable.
For each $\beta\in(0,1)$ there exists $\beta_0\in(0,1)$
such that $\Sigma_{\beta}f(E)\subset f(\Sigma_{\beta_0} E)$.
Consequently, $\Sigma f(E)= f(\Sigma E)$.
Also, $\mbdy f(E)=f(\mbdy E)$.
\end{lemma}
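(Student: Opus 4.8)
\emph{Proof plan.} All three statements will be deduced from a single quantitative property of $f$ (and, by symmetry, of $f^{-1}$): there is an increasing gauge $\phi\colon(0,1)\to(0,1)$, depending only on the quasisymmetry function of $f$ and on the Ahlfors regularity constants, such that for every measurable $A\subseteq X$, every $x\in X$ and every sufficiently small $r>0$, with $y:=f(x)$ and $s:=L_f(x,r)$,
\[
\frac{\mu_X(B(x,r)\cap A)}{\mu_X(B(x,r))}\ge\beta\ \Longrightarrow\ \frac{\mu_Y(B(y,s)\cap f(A))}{\mu_Y(B(y,s))}\ge\phi(\beta).
\]
This is the quasi-preservation of measure density by quasiconformal maps, which I would quote from \cite{GK,KMS}. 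Because $L_f(x,r)\to0$ as $r\to0$ and, by \eqref{eq:infinitesimal-to-finite-dilatation} and Lemma~\ref{lem:qs-inclusions}, every small scale $s$ arises (up to a fixed factor) as $L_f(x,r)$ for some small $r$, passing to $\liminf$ and $\limsup$ turns this into the implications
\[
\underline{D}(A,x)\ge\beta\ \Rightarrow\ \underline{D}(f(A),f(x))\ge\phi(\beta),\qquad \overline{D}(A,x)>0\ \Rightarrow\ \overline{D}(f(A),f(x))>0,
\]
valid also with $f$ replaced by $f^{-1}$, which is quasiconformal by Theorem~\ref{thm:qc-equivalences} (the gauge picks up only a harmless fixed multiplicative loss from Ahlfors regularity). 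I would also record that, since $\underline{D}\le\overline{D}$, the requirement ``$x\in\mbdy E$'' in Definition~\ref{def:gamma} is automatic once both lower densities are positive, so $\Sigma_\beta E=\{x:\underline{D}(E,x)\ge\beta\text{ and }\underline{D}(X\setminus E,x)\ge\beta\}$.

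Granting this, the first assertion is immediate: if $y\in\Sigma_\beta f(E)$ and $x=f^{-1}(y)$, then $\underline{D}(f(E),y)\ge\beta$ and $\underline{D}(Y\setminus f(E),y)\ge\beta$; applying the $\liminf$ implication for $f^{-1}$ to the sets $f(E)$ and $Y\setminus f(E)$, and using $f^{-1}(f(E))=E$ and $f^{-1}(Y\setminus f(E))=X\setminus E$, I get $\underline{D}(E,x)\ge\phi(\beta)$ and $\underline{D}(X\setminus E,x)\ge\phi(\beta)$, i.e.\ $x\in\Sigma_{\beta_0}E$ with $\beta_0:=\phi(\beta)$. Taking the union over $\beta$ gives $\Sigma f(E)\subseteq f(\Sigma E)$; the opposite inclusion follows by applying this same first assertion to the quasiconformal map $f^{-1}\colon Y\to X$ and the set $f(E)\subseteq Y$, which yields $f(\Sigma_\beta E)\subseteq\Sigma_{\beta_0}f(E)\subseteq\Sigma f(E)$ for each $\beta$. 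Finally, since $f(X\setminus E)=Y\setminus f(E)$, the $\limsup$ implication applied to both $f$ and $f^{-1}$ gives $\overline{D}(E,x)>0\Leftrightarrow\overline{D}(f(E),f(x))>0$ and likewise for the complements; hence $x\in\mbdy E$ exactly when $f(x)\in\mbdy f(E)$, that is, $\mbdy f(E)=f(\mbdy E)$.

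If one prefers a self-contained proof of the quantitative property, the plan is: use \eqref{eq:infinitesimal-to-finite-dilatation} and Lemma~\ref{lem:qs-inclusions} (as in the proof of Lemma~\ref{lem:Jac-geq-L^Q}) to sandwich $f(B(x,\rho))$ between the concentric balls $B(y,L_f(x,\rho)/\eta(1))$ and $B(y,L_f(x,\rho))$ of bounded eccentricity; decompose $B(x,r)$ into the dyadic annuli $N_k=B(x,2^{-k}r)\setminus B(x,2^{-k-1}r)$; use Ahlfors regularity to fix $m=m(\beta,Q,C_A)$ with $\mu_X(B(x,2^{-m}r))\le\tfrac{\beta}{2}\mu_X(B(x,r))$, so that the hypothesis forces $\mu_X(A\cap N_j)\ge\tfrac{\beta}{2m}\mu_X(B(x,r))$ for some $j<m$; then bound the $\mathcal{H}^Q$-distortion of $f$ on the single annulus $N_j$ (whose points all lie at distance comparable to $2^{-j}r$ from $x$) in terms of $\eta$, $C_A$, $Q$; and finally sum and compare with $\mu_Y(B(y,s))\approx L_f(x,r)^Q$. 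The step I expect to be the genuine obstacle is the last one: a quasisymmetric map of an annulus of bounded eccentricity need not be bi-Lipschitz, so the distortion bound cannot come from a crude Lipschitz estimate but must be extracted from Ahlfors regularity of both $X$ and $Y$ via a covering/packing argument — equivalently, from an $A_\infty$-type (reverse Hölder) property of the volume derivative of $f$ — all while keeping the constants dependent only on $\eta$, $C_A$, $Q$. This is exactly the ingredient that \cite{GK,KMS} provide off the shelf, which is why I would invoke them rather than reprove it here.
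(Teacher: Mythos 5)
Your proposal is correct and follows essentially the same strategy as the paper: both rest on the quantitative ``uniform density property'' of quasiconformal maps from~\cite{KMS} (you phrase it as a density gauge $\phi$, the paper as the power-law estimate $\mu_Y(f(E)\cap B_2)/\mu_Y(B_2)\le b\,(\mu_X(E\cap B_1)/\mu_X(B_1))^a$ of~\cite[Theorem~6.2]{KMS}, which are the same ingredient), apply it with $E$ and $X\setminus E$ to pass lower densities through $f^{-1}$, and then invoke symmetry for the reverse inclusion and use upper densities for the $\mbdy$ identity. The only (harmless) cosmetic difference is your use of the circumscribed radius $L_f(x,r)$ versus the paper's inscribed ball $B_2$; quasisymmetry makes these comparable, so the argument is the same.
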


\begin{proof}
Let $x\in X$. By \cite[Theorem 6.2]{KMS} we know that for all sufficiently
small balls $B_1$ centered at $x$, we have for some $a,b> 0$
\begin{equation}\label{eq:uniform density property}
\frac{\mu_Y(f(E)\cap B_2)}{\mu_Y(B_2)}\le b\left(\frac{\mu_X(E\cap B_1)}{\mu_X(B_1)}\right)^a,
\end{equation}
where $B_2$ denotes the largest open ball in $f(B_1)$ with center $f(x)$.

Suppose $\beta\in(0,1)$ and $f(x)\in \Sigma_{\beta}f(E)$, that is, both
$\underline{D}(f(E),f(x))$ and $\underline{D}(Y\setminus f(E),f(x))$ are at least as large as $\beta$.
As the radius of $B_1$ converges to 0, so does the radius of $B_2$, and so
it follows from \eqref{eq:uniform density property} that
\[
\beta_0:=\left(\frac{\beta}{b}\right)^{1/a}\le \underline{D}(E,x).
\]
By using the fact that $\underline{D}(Y\setminus f(E),f(x))\ge\beta$,
and \eqref{eq:uniform density property} with $E$ replaced by $X\setminus E$, we
get also $\beta_0\le \underline{D}(X\setminus E,x)$.
Thus $x\in \Sigma_{\beta_0}E$, and so we have proved that
$\Sigma_{\beta}f(E)\subset f(\Sigma_{\beta_0} E)$.
It follows that $\Sigma f(E)\subset f(\Sigma E)$.
Since $f^{-1}$ is also quasiconformal, we also get
\[
\Sigma f^{-1}(f(E))\subset f^{-1}(\Sigma f(E))
\]
and so $f(\Sigma E)\subset \Sigma f(E)$.
We therefore have $f(\Sigma E)= \Sigma f(E)$.

Next suppose that $f(x)\in \partial^*f(E)$.
It follows from \eqref{eq:uniform density property} that
\[
0<\left(\frac{\overline{D}(f(E),x)}{b}\right)^{1/a}\le \overline{D}(E,x).
\]
From the fact that $\overline{D}(Y\setminus f(E),f(x))>0$,
and \eqref{eq:uniform density property} with $E$ replaced by $X\setminus E$, we
get also $0< \overline{D}(X\setminus E,x)$.
Thus $x\in \partial^*E$, and so we have proved that
$\partial^*f(E)\subset f(\partial^*E)$.
Since $f^{-1}$ is also quasiconformal, we also get
\[
\partial^* f^{-1}(f(E))\subset f^{-1}(\partial^* f(E))
\]
and so $f(\partial^* E)\subset \partial^* f(E)$, whence
we conclude that $f(\partial^* E)= \partial^* f(E)$.
\end{proof}

\begin{lemma}\label{lem:alpha-bdy}
There exists $\alpha>0$ such that for every $E\subset X$ of finite 
perimeter and for $\mathcal{H}^{Q-1}$-almost every $x \in \mbdy E$, 
\[
\liminf_{r\to 0} \frac{P(E,B(x,r))}{r^{Q-1}}>\alpha.
\]
Moreover,
if $0<\beta<1$, then
there is some $\alpha(\beta)>0$ such that whenever $E\subset X$ is a measurable set
and $x\in\Sigma_\beta E$, we have 
\[
\liminf_{r\to 0}\frac{P(E,B(x,r))}{r^{Q-1}}>\alpha(\beta).
\]
\end{lemma}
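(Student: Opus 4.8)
The plan is to obtain both assertions as essentially immediate consequences of the relative isoperimetric inequality combined with Ahlfors regularity, proving the quantitative ``moreover'' statement first (it is logically primary) and then reading off the first assertion from Theorem~\ref{thm:Ambr1}.

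First I would fix $\beta\in(0,1)$, a measurable set $E\subset X$, and a point $x\in\Sigma_\beta E$, so that $\underline{D}(E,x)\ge\beta$ and $\underline{D}(X\setminus E,x)\ge\beta$. Then for each $\eps\in(0,\beta)$ there is $r_0>0$ such that $\min\{\mu(B(x,r)\cap E),\mu(B(x,r)\setminus E)\}\ge(\beta-\eps)\,\mu(B(x,r))$ for all $0<r<r_0$. Since $X$ is geodesic, the relative isoperimetric inequality holds with $\lambda=1$, and Ahlfors regularity gives $\mu(B(x,r))\ge r^Q/C_A$ once $r$ is small; combining these,
\[
(\beta-\eps)\,\frac{r^Q}{C_A}\le (\beta-\eps)\,\mu(B(x,r))\le C_I\,r\,P(E,B(x,r)),
\]
which is trivially true when $P(E,B(x,r))=\infty$. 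Hence $P(E,B(x,r))\ge\frac{\beta-\eps}{C_A C_I}\,r^{Q-1}$ for all small $r$, so $\liminf_{r\to0}P(E,B(x,r))/r^{Q-1}\ge(\beta-\eps)/(C_A C_I)$; letting $\eps\to0^+$ and taking $\alpha(\beta):=\beta/(2C_A C_I)$ gives the asserted strict inequality.

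For the first assertion, let $E\subset X$ have finite perimeter and let $\gamma>0$ be the structural constant furnished by Theorem~\ref{thm:Ambr1}, for which $\mathcal{H}^{Q-1}(\mbdy E\setminus\Sigma_\gamma E)=0$. Then $\mathcal{H}^{Q-1}$-a.e.\ $x\in\mbdy E$ lies in $\Sigma_\gamma E$, and the case $\beta=\gamma$ of the previous step yields $\liminf_{r\to0}P(E,B(x,r))/r^{Q-1}>\alpha(\gamma)$. Since $\gamma$ depends only on $C_A$, $C_P$ and $\lambda$, the value $\alpha:=\alpha(\gamma)$ works simultaneously for all sets $E$ of finite perimeter. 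There is no genuine obstacle here: the only non-elementary ingredient is the null statement $\mathcal{H}^{Q-1}(\mbdy E\setminus\Sigma_\gamma E)=0$ from Theorem~\ref{thm:Ambr1}, which is precisely what lets one pass from the class $\Sigma_\gamma E$ (on which the estimate is elementary) to $\mathcal{H}^{Q-1}$-almost all of the measure-theoretic boundary; a minor point is to invoke the isoperimetric inequality and the Ahlfors lower bound only for radii small enough that $B(x,r)\neq X$, which is automatic as $r\to0$.
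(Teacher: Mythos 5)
Your proof is correct and takes essentially the same route as the paper: both combine the relative isoperimetric inequality with Ahlfors regularity, and both obtain the first assertion from the ``moreover'' part by invoking Theorem~\ref{thm:Ambr1} to see that $\mathcal{H}^{Q-1}$-a.e.\ point of $\mbdy E$ lies in $\Sigma_\gamma E$. Your halving of the constant (taking $\alpha(\beta)=\beta/(2C_A C_I)$ rather than $\beta/(C_A C_I)$) is a minor but sensible fix to make the asserted strict inequality actually follow.
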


\begin{proof}
By Ahlfors $Q$-regularity we know that $r^{-1}\mu(B(x,r))$ is comparable to $r^{Q-1}$. 
Recall that $C_I$ is the constant from the isoperimetric inequality and $C_A$ is the Ahlfors regularity constant.
By Theorem~\ref{thm:Ambr1}, there exists $\gamma>0$ such that for $\mathcal{H}^{Q-1}$-almost every $x\in\mbdy E$,
\[
\gamma \le \liminf_{r\to 0}\frac{\mu(B(x,r)\cap E)}{\mu(B(x,r))} 
\quad\text{ and }\quad\gamma \le \liminf_{r\to 0}\frac{\mu(B(x,r)\setminus E)}{\mu(B(x,r))}.
\]

Then by the relative isoperimetric inequality,
\begin{align*}
\gamma \le & \liminf_{r\to 0^+} \frac{\min\{\mu(B(x,r) \cap E),\mu(B(x,r)\setminus E)\}}{\mu(B(x,r))} \\ 
	\le & \liminf_{r\to 0^+} \frac{r C_I P(E,B(x,r))}{\mu(B(x,r))} \\ 
	\le & \liminf_{r \to 0^+} \frac{C_I C_A P(E,B(x,r))}{r^{Q-1}}.
\end{align*}
Letting $\alpha=\frac{\gamma}{C_I C_A}$ concludes
the proof of the first part of the lemma.  
The second part of the lemma is proved in the same way as the  first part, with
\[
\alpha(\beta):=\frac{\beta}{C_I C_A}. \qedhere
\]
\end{proof}

\begin{defn}\label{def:alpha-bdy}
For $\alpha>0$, define
\[
\abdy E :=\Big\{x\in\mbdy E\Big|\liminf_{r\to 0}\frac{P(E,B(x,r))}{r^{Q-1}}>\alpha\Big\}.
\]
\end{defn}

\begin{remark}
By Lemma~\ref{lem:alpha-bdy} above, we have that for each $0<\beta<1$,
\[
\Sigma_\beta E\subset\partial^{\alpha(\beta)}E.
\]
\end{remark}

We need the following ``continuity from below" for families of measures, 
with the families not necessarily measurable with respect to the outer measure $\Mod_p$.

\begin{lemma}[{Ziemer's lemma~\cite[Lemma~3.1(3)]{BHW}}]\label{lem:mod-continuity}
Let $\{\mathcal{L}_i\}_{i\in\N}$ be a sequence of families of measures in $X$ such that for each $i$, 
$\mathcal{L}_i\subset \mathcal{L}_{i+1}$.
Then for $1<p<\infty$,
\[
\Mod_p\Big(\bigcup_{i\in\N}\mathcal{L}_i\Big) = \lim_{i\to\infty}\Mod_p(\mathcal{L}_i).
\]
\end{lemma}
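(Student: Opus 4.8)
The plan is to run the classical Fuglede--Ziemer argument for continuity from below, the essential analytic input being the uniform convexity of $L^p(X,\mu)$ for $1<p<\infty$ in the form of Clarkson's inequalities. Throughout write $\mathcal{L}:=\bigcup_{i\in\N}\mathcal{L}_i$.

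First I would dispose of the easy half. Since $\mathcal{L}_i\subset\mathcal{L}_{i+1}$ implies $\mathcal{A}(\mathcal{L}_{i+1})\subset\mathcal{A}(\mathcal{L}_i)$, the sequence $\Mod_p(\mathcal{L}_i)$ is non-decreasing, so $M:=\lim_{i\to\infty}\Mod_p(\mathcal{L}_i)$ exists in $[0,\infty]$; and since $\mathcal{A}(\mathcal{L})\subset\mathcal{A}(\mathcal{L}_i)$ for every $i$, we get $M\le\Mod_p(\mathcal{L})$. For the reverse inequality we may assume $M<\infty$. For each $i$ fix a nonnegative Borel $\rho_i\in\mathcal{A}(\mathcal{L}_i)$ with $\int_X\rho_i^p\,d\mu\le\Mod_p(\mathcal{L}_i)+2^{-i}$. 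The crucial step is to show that $(\rho_i)$ is Cauchy in $L^p(X,\mu)$: for $i<j$ we have $\mathcal{L}_i\subset\mathcal{L}_j$, so $(\rho_i+\rho_j)/2\in\mathcal{A}(\mathcal{L}_i)$, whence $\int_X((\rho_i+\rho_j)/2)^p\,d\mu\ge\Mod_p(\mathcal{L}_i)$; feeding this into Clarkson's inequality (in its standard form for $p\ge2$ and its conjugate form for $1<p<2$) bounds $\|\rho_i-\rho_j\|_p$ for $j>i$ by a quantity governed by $M+2^{-i}-\Mod_p(\mathcal{L}_i)$, which tends to $0$ as $i\to\infty$. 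Hence $\rho_i\to\rho$ in $L^p$ for some nonnegative $\rho$ with $\|\rho\|_p^p\le M$.

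The point that requires care — and the one I expect to be the only real obstacle — is that $L^p$-convergence does not by itself guarantee $\int_X\rho\,d\lambda\ge1$ for the measures $\lambda\in\mathcal{L}$, since these need not be absolutely continuous with respect to $\mu$. To get around this, fix $\eps>0$ and, using the Cauchy property, pass to a subsequence $(\rho_{i_k})$ with $\|\rho_{i_1}\|_p^p\le M+\eps$ and $\sum_{k}\|\rho_{i_{k+1}}-\rho_{i_k}\|_p<\eps$, and set $\rho^*:=\rho_{i_1}+\sum_{k}|\rho_{i_{k+1}}-\rho_{i_k}|$. Then $\rho^*$ is a nonnegative Borel function with $\|\rho^*\|_p\le(M+\eps)^{1/p}+\eps$ by Minkowski's inequality, and since telescoping the partial sums gives $\rho_{i_k}\le\rho^*$ pointwise for every $k$, any $\lambda\in\mathcal{L}_j$ satisfies $\int_X\rho^*\,d\lambda\ge\int_X\rho_{i_k}\,d\lambda\ge1$ as soon as $i_k\ge j$; thus $\rho^*\in\mathcal{A}(\mathcal{L})$. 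Therefore $\Mod_p(\mathcal{L})\le\|\rho^*\|_p^p\le\big((M+\eps)^{1/p}+\eps\big)^p$, and letting $\eps\to0$ yields $\Mod_p(\mathcal{L})\le M=\lim_{i\to\infty}\Mod_p(\mathcal{L}_i)$. Combined with the easy half, this proves the claimed equality.
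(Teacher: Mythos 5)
The paper does not prove this lemma; it simply cites \cite[Lemma~3.1(3)]{BHW}, whose proof goes back to the Fuglede--Ziemer uniform-convexity argument. Your proof is correct and is precisely that argument.

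To spell out why your steps close cleanly: the monotonicity and inclusion $\mathcal{A}(\mathcal{L})\subset\mathcal{A}(\mathcal{L}_i)$ give the easy inequality; the observation that $(\rho_i+\rho_j)/2\in\mathcal{A}(\mathcal{L}_i)$ for $i<j$ is exactly what makes Clarkson's inequality bite, since it forces $\|(\rho_i+\rho_j)/2\|_p^p\ge\Mod_p(\mathcal{L}_i)$ while $\|\rho_i\|_p^p,\|\rho_j\|_p^p\le M+2^{-i}$, so the midpoint cannot have much smaller norm than the endpoints, which is precisely what uniform convexity forbids unless $\rho_i$ and $\rho_j$ are close. Your telescoping construction of $\rho^*$ correctly addresses the one genuine subtlety — that the measures $\lambda$ in the families need not be absolutely continuous with respect to $\mu$, so $L^p$-convergence alone does not transfer the admissibility constraint to the limit. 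Passing to a rapidly convergent subsequence and majorizing each $\rho_{i_k}$ pointwise by $\rho^*$, with $\|\rho^*\|_p$ controlled via Minkowski, is the standard device (due to Fuglede) that the cited source also uses. Your argument is complete.
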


\section{Proof of Theorem~\ref{thm:main-1}}

In this section we prove Theorem~\ref{thm:main-1}. To do so, we adapt the 
tools given in~\cite{BHW} to study boundaries of sets of finite perimeter, which are
\emph{not} Ahlfors regular in general. The adaptation of the tools to this setting is
given in Proposition~\ref{prop:image-alpha-bdy}.

Recall that $P=\{x\in X \,:\, L_f(x)=\infty \text{ or } L_f(x)=0\}$.
Note from Lemma~\ref{lem:Jac-geq-L^Q} that $J_f(x)=0$
for $\mathcal H^{Q}$-almost every $x$ for which $L_f(x)=0$. So
by the fact that $f$ and $f^{-1}$ satisfy condition $(N)$
(from Theorem~\ref{thm:qc-equivalences}), we must have that
the zero set of $J_f$ and hence the zero set of $L_f$ is a null set.
As $L_f\in L^1_{loc}(X)$, we know that 
$\mathcal{H}^Q(P)=0$.
We remind the reader of the standard assumptions set forth at the
end of Sections~2 and~3.
Recall also the definition of $\abdy E$ from Definition~\ref{def:alpha-bdy}.
As noted in the introduction, with a slight abuse in notation, 
we will use $\mathcal{L}$ to denote both the family of sets $E$ of
finite perimeter and the family of measures $\mathcal{H}^{Q-1}\mres_{\Sigma E}$.

If $\mathcal{L}$ contains a set $E$ with $P(E,X)=0$, i.e.~$\mathcal{H}^{Q-1}(\mbdy E)=0$,
then $\Mod_{Q/(Q-1)}(\mathcal{L})=\infty$ as there can be no admissible test function $\rho$ for the class
$\mathcal{L}$.   
In addition, by the fact that $X$ supports a $1$-Poincar\'e inequality we will also have that $\mathcal{H}^Q(E)=0$
or $\mathcal{H}^Q(X\setminus E)=0$. It then follows that $\mathcal{H}^Q(f(E))=0$
or $\mathcal{H}^Q(f(X\setminus E))=\mathcal{H}^Q(Y\setminus f(E))=0$, whence it follows that $f(E)$ is of 
finite perimeter with $P(f(E),Y)=0$. We conclude that $\Mod_{Q/(Q-1)}(f\mathcal{L})=\infty$.
In this case Theorem~\ref{thm:main-1} trivially holds true. So in the proof of the next proposition (and in the next section)
we will assume that every $E\in\mathcal{L}$ satisfies $0<P(E,X)<\infty$.

\begin{proposition}\label{prop:image-alpha-bdy}
Let $\mathcal{L}$ denote the given collection
of bounded sets $E\subset X$ of positive and finite perimeter measure. 
Then the following hold true:
\begin{enumerate}
\item for each $\alpha>0$ and $\Mod_{Q/(Q-1)}$-almost every $E\in\mathcal{L}$
we have $\mathcal{H}^{Q-1}(f(\abdy E))<\infty$,
\item if \ $U_0\subset X$ with $\mathcal{H}^{Q} (f(U_0)) = 0$ and $\alpha>0$, then we have that 
   $\mathcal{H}^{Q-1}(f(\abdy E \cap U_0))=0$ for
   $\Mod_{Q/(Q-1)}$-almost every $E\in\mathcal{L}$,
\item if \ $U_0\subset X$ with $\mathcal{H}^{Q} (f(U_0)) = 0$, then 
with $\mathcal{L}_{bad}$ the collection of all $E\in\mathcal{L}$ with 
$\mathcal{H}^{Q-1}(\Sigma f(E) \cap f(U_0))>0$, we have
$\Mod_{Q/(Q-1)}(\mathcal{L}_{bad})=0=\Mod_{Q/(Q-1)}(f\mathcal{L}_{bad})$.
\end{enumerate}
\end{proposition}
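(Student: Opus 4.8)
The plan is to use the modulus machinery together with the covering properties of $\abdy E$ and the quasisymmetry of $f$. For part~(1), I would fix $\alpha > 0$ and aim to show that the subfamily $\mathcal{L}_\alpha^\infty := \{ E \in \mathcal{L} : \mathcal{H}^{Q-1}(f(\abdy E)) = \infty \}$ has $\Mod_{Q/(Q-1)}$ equal to zero. The key is to produce a single admissible function witnessing this. First I would pick, for each $E$ and each small scale, a Vitali-type cover of $\abdy E$ by balls $B(x,r)$ on which both $P(E,B(x,r)) \gtrsim \alpha\, r^{Q-1}$ (by Definition~\ref{def:alpha-bdy}) and, by Lemma~\ref{lem:qs-inclusions}/quasisymmetry, $f(B(x,10r))$ is comparable to a ball $B(f(x),s)$ with $s \approx L_f(x,r)$ and $s^{Q} \approx \mathcal{H}^Q(f(B(x,r)))$. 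Then $\mathcal{H}^{Q-1}$ of the image surface is controlled, up to constants, by $\sum s^{Q-1} \approx \sum \mathcal{H}^Q(f(B_i))^{(Q-1)/Q}$, whereas the total variation / perimeter controls $\sum r^{Q-1}$. The natural candidate test function is $\rho = c\, L_f^{Q-1}$ (equivalently $c\,J_f^{(Q-1)/Q}$, using Lemma~\ref{lem:Jac-geq-L^Q}): for a set with $\mathcal{H}^{Q-1}(f(\abdy E)) = \infty$ one shows $\int_{\Sigma E} \rho \, d\mathcal{H}^{Q-1} = \infty$, so in fact \emph{every} positive multiple of $L_f^{Q-1}$ restricted to such surfaces integrates to $\infty$ — but what we actually want is that the admissible test function $\rho = \varepsilon\,L_f^{Q-1}$ has $\int_X \rho^{Q/(Q-1)}\,d\mu = \varepsilon^{Q/(Q-1)} \int_X L_f^Q \, d\mu \lesssim \varepsilon^{Q/(Q-1)} \int_X J_f\, d\mu = \varepsilon^{Q/(Q-1)}\mathcal{H}^Q(f(X))$, which may be infinite if $X$ is unbounded; to handle that I would exhaust by bounded pieces and use Ziemer's lemma (Lemma~\ref{lem:mod-continuity}), or argue locally since the sets $E \in \mathcal{L}$ are bounded. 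Pushing the Hölder/discretization estimate through gives that $\varepsilon\, L_f^{Q-1}$ is admissible for $\mathcal{L}_\alpha^\infty$ for every $\varepsilon>0$, forcing $\Mod_{Q/(Q-1)}(\mathcal{L}_\alpha^\infty) = 0$.

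For part~(2), I would run the same discretization but now only over the portion $\abdy E \cap U_0$. Since $\mathcal{H}^Q(f(U_0)) = 0$, for any $\delta > 0$ I can cover $f(U_0)$ by balls in $Y$ with $\sum \mathcal{H}^Q(\text{balls}) < \delta$; pulling these back through $f^{-1}$ (again via Lemma~\ref{lem:qs-inclusions}, using that $f^{-1}$ is quasisymmetric) gives a cover of $U_0$ by balls $B_i$ in $X$ with $\sum \mathcal{H}^Q(f(B_i)) < C\delta$. The function $\rho_\delta := \sum_i \mathcal{H}^Q(f(B_i))^{-1/Q}\, \chi_{B_i}$ (or rather $\sum_i (\diam f(B_i))^{-1}\chi_{B_i}$, suitably comparably defined) then satisfies $\int_{\abdy E \cap U_0} \rho_\delta\, d\mathcal{H}^{Q-1} \ge c$ for a dimensional constant $c$ — because on each ball meeting $\abdy E$ the perimeter, hence $\mathcal{H}^{Q-1}$ of the surface, is $\gtrsim r_i^{Q-1}$, and by quasisymmetry $r_i^{Q-1} \gtrsim \mathcal{H}^Q(f(B_i))^{(Q-1)/Q}$, while the target surface measure is what is small. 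After normalizing, $\varepsilon^{-1}\rho_\delta$ is admissible for the family $\{E : \mathcal{H}^{Q-1}(f(\abdy E \cap U_0)) > \varepsilon\}$, and $\int_X (\varepsilon^{-1}\rho_\delta)^{Q/(Q-1)}\, d\mu \lesssim \varepsilon^{-Q/(Q-1)} \sum_i \mathcal{H}^Q(f(B_i))^{-1/(Q-1)}\mu(B_i) \lesssim \varepsilon^{-Q/(Q-1)}\sum_i \mathcal{H}^Q(f(B_i)) \le C\varepsilon^{-Q/(Q-1)}\delta \to 0$ as $\delta \to 0$. Hence each such family has modulus zero; taking a countable union over $\varepsilon = 1/k$ via subadditivity of $\Mod_{Q/(Q-1)}$ gives (2).

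For part~(3), recall from Lemma~\ref{lem:alpha-bdy} and the remark following Definition~\ref{def:alpha-bdy} that $\Sigma E = \bigcup_{\beta \in (0,1)} \Sigma_\beta E \subset \bigcup_k \partial^{\alpha(1/k)} E$, and by Lemma~\ref{lem:shift-density} $f(\Sigma E) = \Sigma f(E)$, so $\Sigma f(E) \cap f(U_0) = f(\Sigma E \cap U_0) \subset \bigcup_k f(\partial^{\alpha(1/k)} E \cap U_0)$. If $\mathcal{H}^{Q-1}(\Sigma f(E) \cap f(U_0)) > 0$ then $\mathcal{H}^{Q-1}(f(\partial^{\alpha(1/k)}E \cap U_0)) > 0$ for some $k$, so $\mathcal{L}_{bad} \subset \bigcup_k \{E : \mathcal{H}^{Q-1}(f(\partial^{\alpha(1/k)}E \cap U_0)) > 0\}$, and by part~(2) (with $\alpha = \alpha(1/k)$) each of these has $\Mod_{Q/(Q-1)}$ zero; countable subadditivity gives $\Mod_{Q/(Q-1)}(\mathcal{L}_{bad}) = 0$. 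The statement $\Mod_{Q/(Q-1)}(f\mathcal{L}_{bad}) = 0$ then follows because a positive Borel function $\rho$ is admissible for $f\mathcal{L}_{bad}$ iff $\rho$ integrates to at least $1$ over each $\Sigma f(E)$, $E \in \mathcal{L}_{bad}$; since the family $\{\Sigma f(E) : E\in\mathcal{L}_{bad}\}$ has $\mathcal{H}^{Q-1}$-a.e. surface hit by a null modulus condition — more precisely, one builds an admissible function for $f\mathcal{L}_{bad}$ directly out of the $Y$-side covers of $f(U_0)$ that we already used, observing that $\Sigma f(E) \cap f(U_0)$ carries positive $\mathcal{H}^{Q-1}$ measure while its complement lies outside $f(U_0)$ and the surface there is again covered by balls of small total measure — giving $\Mod_{Q/(Q-1)}(f\mathcal{L}_{bad}) = 0$ by the same estimate as in (2), now performed entirely in $Y$ using that $f^{-1}$ is quasisymmetric.

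The main obstacle I expect is the careful bookkeeping in the discretization step: one must simultaneously (a) choose Vitali covers adapted to $\abdy E$ so that perimeter lower bounds hold on each ball, (b) transfer ball-radius estimates across $f$ using quasisymmetry with constants independent of $E$, and (c) convert $\mathcal{H}^{Q-1}$-packing sums on the image surface into $\mathcal{H}^Q$-sums of image balls via the exponent $(Q-1)/Q$ in a way compatible with the $Q/(Q-1)$ power in the modulus integral — this is exactly where Hölder's inequality and the Ahlfors regularity of both spaces are used, and where the constant $C$ in the final statement is produced. Establishing the uniform perimeter lower bound $P(E,B(x,r))\gtrsim \alpha r^{Q-1}$ simultaneously over a genuine cover (not just at a.e. point) requires passing through Definition~\ref{def:alpha-bdy} and a Vitali covering argument at small scales, and making the resulting constants independent of $E$ is the technical heart of the proof.
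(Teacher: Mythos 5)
Your overall strategy---discretize by balls, use the perimeter lower bound from Definition~\ref{def:alpha-bdy}, transfer across $f$ by quasisymmetry, and balance the exponent $(Q-1)/Q$ against the $Q/(Q-1)$ in the modulus integral---is the right shape, but the specific test functions you propose do not work, and in each part there is a step that fails.

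\textbf{Part (1).} You assert that $\mathcal{H}^{Q-1}(f(\abdy E))=\infty$ forces $\int_{\Sigma E} L_f^{Q-1}\,d\mathcal{H}^{Q-1}=\infty$, so that $\rho=\varepsilon L_f^{Q-1}$ is admissible. This is the crux of your argument, and it is not justified. Your discretization gives you lower bounds of the form $\sum_j (s_j/r_j)^{Q-1} r_j^{Q-1}=\sum_j s_j^{Q-1}$ for a Vitali subfamily, but $s_j/r_j=L_f(x_j,r_j)/r_j$ is a \emph{finite-scale} ratio, while $L_f(x)$ is the infinitesimal $\limsup$; there is no pointwise inequality $L_f(x)\gtrsim s_j/r_j$ for $x$ near $x_j$, and quasisymmetry does not give one (it controls ratios of distances, not the pointwise infinitesimal dilatation at a different point). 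What you are implicitly invoking is the inequality $J_{f,E}\le C J_f^{(Q-1)/Q}\approx C L_f^{Q-1}$ from Lemma~\ref{thm:J_E^(Q/Q-1)-leq-Jac}, but that lemma is proved \emph{after} and \emph{using} Proposition~\ref{prop:image-alpha-bdy} (via Proposition~\ref{prop:abs-cont}), so appealing to it here would be circular. It also controls only $\mathcal{H}^{Q-1}(\Sigma f(E))=\mathcal{H}^{Q-1}(f(\Sigma E))$, not $\mathcal{H}^{Q-1}(f(\abdy E))$; the set $\abdy E\setminus\Sigma E$ is $\mathcal{H}^{Q-1}$-null but its image need not be. The paper avoids the whole issue by \emph{not} passing to the infinitesimal $L_f$: it covers $f(U)$ in $Y$ by balls $B(y_i,s_i)$, pulls back via Lemma~\ref{lem:qs-inclusions} to balls $B(x_i,r_i)$, and takes the test function $g=\sup_i (s_i/r_i)^{Q-1}\chi_{2B_i}$, built from the finite-scale ratios themselves. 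This $g$ depends only on $U$ and the cover, not on $E$; the perimeter lower bound on each $2B_j$ gives $\int_{\Sigma E}g\,d\mathcal{H}^{Q-1}\gtrsim \alpha\sum_j s_j^{Q-1}\gtrsim \alpha M\eta(10)^{1-Q}$ for all $E$ in $\{E:\mathcal{H}^{Q-1}(f(\abdy E\cap U))>M\}$, while $\int_X g^{Q/(Q-1)}\lesssim \sum_i s_i^Q\lesssim \mathcal{H}^Q(f(U))+\varepsilon$. This single estimate, after Ziemer's lemma, gives both (1) (send $M\to\infty$) and (2) (take $\mathcal{H}^Q(f(U))=0$) in one stroke.

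\textbf{Part (2).} Your proposed test function $\rho_\delta=\sum_i\mathcal{H}^Q(f(B_i))^{-1/Q}\chi_{B_i}\approx\sum_i s_i^{-1}\chi_{B_i}$ has the wrong scaling: the correct discrete analogue is $(s_i/r_i)^{Q-1}$ on $B_i$, not $s_i^{-1}$. Your lower bound $\int\rho_\delta\,d\mathcal{H}^{Q-1}\ge c$ rests on the claim that ``by quasisymmetry $r_i^{Q-1}\gtrsim\mathcal{H}^Q(f(B_i))^{(Q-1)/Q}$'', i.e.\ $r_i\gtrsim s_i$; this is false in general---quasisymmetry controls ratios, and a quasiconformal map can expand small balls by an arbitrarily large factor. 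Your displayed modulus bound $\sum_i\mathcal{H}^Q(f(B_i))^{-1/(Q-1)}\mu(B_i)\lesssim\sum_i\mathcal{H}^Q(f(B_i))$ likewise requires $r_i^Q\lesssim s_i^{Q^2/(Q-1)}$, again a pointwise comparison between $r_i$ and $s_i$ that quasisymmetry does not furnish. With the paper's $g$, all the powers cancel without such extraneous comparisons.

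\textbf{Part (3).} The reduction of $\Mod_{Q/(Q-1)}(\mathcal{L}_{bad})=0$ to part (2) via the inclusion $\Sigma_\beta E\subset\partial^{\alpha(\beta)}E$ and a countable union over rational $\beta$ is exactly what the paper does and is fine modulo the issues above. For $\Mod_{Q/(Q-1)}(f\mathcal{L}_{bad})=0$, your sketch is both vague and overcomplicated; the paper simply observes that $\infty\chi_{f(U_0)}$ is admissible for $f\mathcal{L}_{bad}$ (every $\Sigma f(E)$ with $E\in\mathcal{L}_{bad}$ meets $f(U_0)$ in positive measure), and $\int_Y(\infty\chi_{f(U_0)})^{Q/(Q-1)}\,d\mathcal{H}^Q=0$ since $\mathcal{H}^Q(f(U_0))=0$. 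You should look for this kind of one-line test function before constructing elaborate covers.
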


\begin{proof}
Recall from Lemma~\ref{lem:shift-density}
that $f(\mbdy E)=\mbdy f(E)$ and $f(\Sigma E)=\Sigma f(E)$.

Let $U\subset X$ be a bounded measurable set. 
Then since $f$ is a homeomorphism and closed and bounded subsets of $X$ are compact,
$f(U)$ is also bounded and
	$f$ and $f^{-1}$ are uniformly continuous
on the sets $U$ and $f(U)$, respectively.
Let $\omega(\cdot)$ be a modulus of continuity for
$f^{-1}$ on $f(U)$.
Let $\vep>0$. By the definition of Hausdorff measure, there exist $y_i\in f(U)$ and $0<s_i<\vep$ 
such that $\{B(y_i,s_i)\}_{i\in\N}$ covers $f(U)$ and $\sum_{i\in\N}s_i^{Q}\le (\mathcal{H}^{Q}(f(U))+\vep)$. 
By Lemma~\ref{lem:qs-inclusions}, 
for every $x_i:=f^{-1}(y_i)$, there exists $0<r_i\le\omega(\vep)$ such that 
\[
B(y_i,s_i)\subset f(B(x_i,r_i))\subset f(B(x_i,10r_i))\subset B(y_i,\eta(10)s_i).
\]
Set $B_i=B(x_i,r_i)$ and $B_i'=B(y_i,s_i)$ and define $g:X\to\R$ by 
\[
g(x)=\sup_{i\in\N} \left(\frac{s_i}{r_i}\right)^{Q-1}\chi_{2B_i}(x).
\]
Fix $\alpha>0$ and define 
\[
\abdy_\delta E=\left\{x\in\mbdy E \ \middle| \ \frac{P(E,B(x,r))}{r^{Q-1}} > \alpha \text{ for all } 0<r\le\delta \right\}
\] 
and note that $\abdy E = \bigcup_{k\in\N} \abdy_{1/k} E $ . For each $M>0$, define 
\[
\mathcal{L}^M_{U,\vep} =
  \left\{E\in\mathcal{L}\,\middle|\,\mathcal{H}^{Q-1}_{\vep \eta(10)}(f(\abdy_{\omega(\vep)}E\cap U))> M\right\}.
\]

We want to show that $C\eta(10)^{Q-1}[\alpha M]^{-1}\, g$ is admissible for calculating 
$\Mod_{Q/(Q-1)}(\mathcal{L}^M_{U,\vep})$. 
Let $E \in \mathcal{L}^M_{U,\vep}$ and set
$I_E = \{i\in\N \, \big| \, B_i\cap(\abdy_{\omega(\vep)} E \cap U) \neq \emptyset \}$. 
Note that $\abdy_{\omega(\vep)} E \cap U \subset \bigcup_{i\in I_E} 2B_i$.
Then by the $5$-Covering Lemma, there exists $J_E \subset I_E$ such that 
$\{2B_j\}_{j\in J_E}$ is pairwise disjoint and 
$\abdy_{\omega(\vep)} E \cap U \subset \bigcup_{j\in J_E} 10B_j$. 
Since $B_j \cap \abdy_{\omega(\vep)} E \cap U \neq \emptyset$ for each $j\in J_E$, 
there exists $z_j \in B_j \cap \abdy_{\omega(\vep)} E \cap U$. 
So $B(z_j,r_j) \subset 2B_j$. Moreover, $z_j \in \abdy_{\omega(\vep)} E$ and 
$r_j \le \omega(\vep)$ imply that $r_j^{-(Q-1)} P(E,B(z_j,r_j)) > \alpha$, and hence
$P(E,2B_j) \ge P(E,B(z_j,r_j)) > \alpha r_j^{Q-1}$. Then by the pairwise
disjointness property of $\{2B_j\}_{j\in J_E}$,
\begin{align*} 
\int_{\Sigma E} g \ d\mathcal{H}^{Q-1} 
        \ge \int_{\Sigma E} \Big(\sup_{j\in J_E} \left(\frac{s_j}{r_j}\right)^{Q-1} \chi_{2B_j} \Big) \ d\mathcal{H}^{Q-1} 
	& = \int_{\Sigma E} \Big(\sum_{j\in J_E} \left(\frac{s_j}{r_j}\right)^{Q-1} \chi_{2B_j} \Big) \ d\mathcal{H}^{Q-1} \\
	& \ge \frac{1}{C}\sum_{j\in J_E} \left(\frac{s_j}{r_j}\right)^{Q-1} P(E,2B_j)\quad
	\textrm{by }\eqref{eq:H and perimeter}\\
	& \ge \frac{1}{C}\sum_{j\in J_E} \left(\frac{s_j}{r_j}\right)^{Q-1} \alpha \, r_j^{Q-1}  \\
	& = \frac{\alpha}{C} \sum_{j\in J_E} s_j^{Q-1}. \\
\end{align*}
Thus we have that 
\begin{equation}\label{eq:B1}
\int_{\Sigma E} g \ d\mathcal{H}^{Q-1} \ge \frac{\alpha}{C} \sum_{j\in J_E} s_j^{Q-1}.
\end{equation} 
Because  $\displaystyle U\cap\abdy_{\omega(\vep)} E \subset \bigcup_{j\in J_E} 10B_j$, 
\[
f(\abdy_{\omega(\vep)} E \cap U) \subset f \Big(\bigcup_{j\in J_E} 10B_j\Big) \subset \bigcup_{j\in J_E} B(y_j, \eta(10)s_j).
\]
Since $E\in \mathcal{L}^M_{U, \vep}$, we have that $\mathcal{H}^{Q-1}_{\vep\eta(10)} (f(\abdy_{\omega(\vep)}E \cap U)) > M$.
As $s_j<\vep$, $\bigcup_{j\in J_E} \eta(10) B_j'$ is an admissible cover for computing 
$\mathcal{H}^{Q-1}_{\vep\eta(10)}(f(\abdy_{\omega(\vep)}E \cap U))$,
and hence 
\[
  M < \mathcal{H}^{Q-1}_{\vep\eta(10)} (f(\abdy_{\omega(\vep)}E  \cap U)) 
	\le \sum_{j\in J_E} (\eta(10)s_j)^{Q-1} 
	= \eta(10)^{Q-1} \sum_{j\in J_E} s_j^{Q-1}.
\]
So 
\begin{equation}\label{eq:B2}
\frac{M}{\eta(10)^{Q-1}} <  \sum_{j\in J_E} s_j^{Q-1}.
\end{equation}
Combining \eqref{eq:B1} and \eqref{eq:B2}, we get 
\[
\int_{\Sigma E} g \ d\mathcal{H}^{Q-1} \ge \frac{\alpha}{C} \left(\frac{M}{\eta(10)^{Q-1}}\right).
\]
Therefore 
\[
\frac{C\, \eta(10)^{Q-1}}{\alpha\, M} g
\]
is admissible for computing $\Mod_{Q/(Q-1)}(\mathcal{L}_{U,\vep}^M)$.
Setting
\[
C_M=\frac{C\, \eta(10)^{Q-1}}{\alpha\, M},
\]
we obtain 
\begin{align*}
\Mod_{Q/(Q-1)} (\mathcal{L}^M_{U,\vep})
  &\le C_A \int_X (C_M g)^{Q/(Q-1)} \ d\mathcal{H}^Q \\
  &= C_M^{Q/(Q-1)} C_A \int_X \sup_{i\in \N} \left(\left(\frac{s_i}{r_i}\right)^{Q-1} \chi_{2B_i} \right)^{Q/(Q-1)} d\mathcal{H}^Q \\
  &\le C_M^{Q/(Q-1)} C_A \int_X \left(\sum_{i\in\N} \left(\frac{s_i}{r_i}\right)^{Q} \chi_{2B_i}\right) d\mathcal{H}^Q \\
  &\le C_M^{Q/(Q-1)} C_A \sum_{i\in\N} \left(\frac{s_i}{r_i}\right)^{Q} \mathcal{H}^Q(2B_i) \\
  &\le C_M^{Q/(Q-1)} C_A \sum_{i\in\N} \left(\frac{s_i}{r_i}\right)^{Q} C_A\, (2r_i)^Q \\ 
  &= C_M^{Q/(Q-1)} C_A^2 2^Q \sum_{i\in\N} s_i^Q \\
  &\le C_M^{Q/(Q-1)} C_A^2 2^Q (\mathcal{H}^Q(f(U)) + \vep). \\ 
\end{align*}

Let $\mathcal{L}^M_U = \bigcup_{k\in\N} \mathcal{L}^M_{U,\, 1/k}$. 
Then 
\begin{equation}\label{eq:LargeSet}
\mathcal{L}^M_U = \{E \in \mathcal{L} \,|\, \mathcal{H}^{Q-1}(f(\abdy E \cap U))>M\}.
\end{equation}
Note that if $m<k$ then $\mathcal{L}^M_{U,\, 1/m}\subset\mathcal{L}^M_{U,\, 1/k}$.
Applying Lemma~\ref{lem:mod-continuity}, we get that
\begin{align}\label{eq:A1}
\Mod_{Q/(Q-1)} (\mathcal{L}^M_U) =\lim_{k\to\infty} \Mod_{Q/(Q-1)} (\mathcal{L}^M_{U,1/k}) 
  &\le\lim_{k\to\infty}C_M^{Q/(Q-1)}C_A^2 2^Q\Big(\mathcal{H}^{Q}(f(U))+\frac 1k\Big) \notag\\
  &=C_M^{Q/(Q-1)}C_A^2 2^Q \mathcal{H}^{Q}(f(U)). 
\end{align}

\vskip .3cm

\noindent {\bf Proof of Claim~\textit{1}:}
Set $\mathcal{L}_U^{\infty} = \{ E \in \mathcal{L} \,|\, \mathcal{H}^{Q-1}(f(\abdy E \cap U))=\infty\}$.
Then for each $M>0$ we have $\mathcal{L}_U^M \supset \mathcal{L}_U^\infty$, and so
\begin{align*}
\Mod_{Q/(Q-1)}(\mathcal{L}_U^\infty) 
\le \lim_{M\to\infty}\Mod_{Q/(Q-1)}(\mathcal{L}^M_U)
& \le \lim_{M\to\infty} C_M^{Q/(Q-1)} C_A^2 2^Q \mathcal{H}^{Q}(f(U)) \\
& = \lim_{M\to\infty} \frac{1}{M^{Q/(Q-1)}} \left(\frac{C\eta(10)^Q}{\alpha^{Q/(Q-1)}}\right) C_A^2 2^Q\mathcal{H}^{Q}(f(U)) \\  
& = 0.
\end{align*}
Fix $x_0 \in X$ and considering $U_i = B(x_0,i)$ for each $i\in\N$, we set
$\mathcal{L}^\infty = \bigcup_{i\in\N} \mathcal{L}^\infty_{U_i}$. 
Then as the sets in $\mathcal{L}$ are bounded, 
$\mathcal{L}^\infty = \{ E \in \mathcal{L} \,|\, \mathcal{H}^{Q-1}(f(\abdy E))=\infty\}$
and 
\[
\Mod_{Q/(Q-1)} (\mathcal{L}^\infty) 
	= \Mod_{Q/(Q-1)} \Big(\bigcup_{i\in\N} \mathcal{L}^\infty_{U_i} \Big) 
	\le \sum_{i\in\N} \Mod_{Q/(Q-1)} (\mathcal{L}^\infty_{U_i}) 
	= 0. 
\]
Therefore the set for which $\mathcal{H}^{Q-1}(f(\abdy E))$ 
is not finite has $Q/(Q-1)$-modulus zero. This proves Claim~\textit{1}.

\vskip .3cm

\noindent {\bf Proof of Claim~\textit{2}:}  Let $U_0 \subset X$ such that $\mathcal{H}^{Q}(f(U_0)) = 0$. 
Then for any bounded $U \subset U_0$, $\mathcal{H}^{Q}(f(U))=0$. 
Recall $\mathcal{L}^M_U$ from~\eqref{eq:LargeSet} for
$M>0$, and 
let $\mathcal{L}^+_U=\bigcup_{m\in\N}\mathcal{L}^{1/m}_U$. 
Then 
\[
\mathcal{L}^+_U = \{ E \in \mathcal{L} \,|\, \mathcal{H}^{Q-1}(f(\abdy E \cap U))>0\},
\]
and by~\eqref{eq:A1} and by the outer measure property of modulus,
\begin{align*}
\Mod_{Q/(Q-1)}\left(\mathcal{L}^+_U\right) =\Mod_{Q/(Q-1)}(\bigcup_{m\in\N} \mathcal{L}^{1/m}_U) 
	&\le\sum_{m\in\N}\Mod_{Q/(Q-1)}(\mathcal{L}^{1/m}_U) \\
	&\le\sum_{m\in\N}C_{1/m}^{Q/(Q-1)}C_A^2 2^Q\mathcal{H}^{Q}(f(U)) \\
	&=0.
\end{align*}
Define $V_i=B(x_0,i)\cap U_0$ and set $\mathcal{L}^+ =\bigcup_{i\in\N}\mathcal{L}^+_{V_i}$. 
Then 
\[
\mathcal{L}^+=\{E\in\mathcal{L}\,|\,\mathcal{H}^{Q-1}(f(\abdy E\cap U_0))>0\}, 
\]
and 
\[
\Mod_{Q/(Q-1)}(\mathcal{L}^+)\le\sum_{i\in\N}\Mod_{Q/(Q-1)}(\mathcal{L}^+_{V_i})=0. 
\]
 
\vskip .3cm
 
\noindent {\bf Proof of Claim~\textit{3}:}
Again, suppose $\mathcal{H}^Q(f(U_0))=0$.
By Lemma~\ref{lem:alpha-bdy}, for any $0<\beta<1$ there exists $\alpha(\beta)>0$ such that 
$\Sigma_\beta E\subset\partial^{\alpha(\beta)}E$. 
Then by Claim~\textit{2}, for $\Mod_{Q/(Q-1)}$-almost every $E$
\begin{align*}
\mathcal{H}^{Q-1}(f(\Sigma E \cap U_0)) =\mathcal{H}^{Q-1}\left(\bigcup_{\beta\in (0,1)}f(\Sigma_\beta E\cap U_0)\right) 
	&\le\mathcal{H}^{Q-1}\left(\bigcup_{\beta\in (0,1)\cap\Q} f(\partial^{\alpha(\beta)}E\cap U_0)\right) \\
	&\le\sum_{\beta\in (0,1)\cap\Q}\mathcal{H}^{Q-1}(f(\partial^{\alpha(\beta)}E\cap U_0)) \\
	&=0.
\end{align*}
Finally, by considering the function $\infty\chi_{f(U_0)}$, we see that
$\Mod_{Q/(Q-1)}(f\mathcal{L}_{bad})=0$.
\end{proof}

\begin{proposition}\label{prop:abs-cont}
Let $\mathcal{L}$ denote the given collection of  bounded sets of finite perimeter in $X$.
For $\Mod_{Q/(Q-1)}$-almost every $E\in\mathcal{L}$ we have
\[
f_\#\mathcal{H}^{Q-1}\mres_{\Sigma f(E)}=
f_\#\mathcal{H}^{Q-1}\mres_{f((\mbdy E\setminus P)\cup\Sigma E)}
\ll \mathcal{H}^{Q-1}\mres_{\mbdy E} = \mathcal{H}^{Q-1}\mres_{\Sigma E}
\]
\end{proposition}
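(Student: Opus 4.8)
The plan is to prove Proposition~\ref{prop:abs-cont} by reducing the measure-equality assertions to facts already established and then establishing absolute continuity via Proposition~\ref{prop:image-alpha-bdy}. First I would dispose of the equalities on both ends. On the target side, Lemma~\ref{lem:shift-density} gives $\Sigma f(E)=f(\Sigma E)$ and $\mbdy f(E)=f(\mbdy E)$, so for any Borel set $A\subset X$ we have $f_\#\mathcal{H}^{Q-1}\mres_{\Sigma f(E)}(A)=\mathcal{H}^{Q-1}(\Sigma f(E)\cap f(A))=\mathcal{H}^{Q-1}(f(\Sigma E\cap A))$; thus the left measure is literally the push-forward (via $f^{-1}$) of $\mathcal{H}^{Q-1}\mres_{\Sigma f(E)}$ and is supported on $\Sigma E$. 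The only real content in the first equality of the statement is that enlarging $\Sigma E$ to $(\mbdy E\setminus P)\cup\Sigma E$ does not change the measure $f_\#\mathcal{H}^{Q-1}\mres_{\Sigma f(E)}$; since $\mbdy f(E)\setminus\Sigma f(E)$ has $\mathcal{H}^{Q-1}$-measure zero (Theorem~\ref{thm:Ambr1}) and $f(\mbdy E)=\mbdy f(E)$, the set $f(\mbdy E\setminus \Sigma E)$ is $\mathcal{H}^{Q-1}$-null, so adding $f(\mbdy E\setminus P)$ to $\Sigma f(E)=f(\Sigma E)$ changes nothing for this measure. On the source side, $\mathcal{H}^{Q-1}\mres_{\mbdy E}=\mathcal{H}^{Q-1}\mres_{\Sigma E}$ is immediate from $\mathcal{H}^{Q-1}(\mbdy E\setminus\Sigma E)=0$, again by Theorem~\ref{thm:Ambr1}.

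The heart of the matter is the absolute continuity $f_\#\mathcal{H}^{Q-1}\mres_{\Sigma f(E)}\ll\mathcal{H}^{Q-1}\mres_{\Sigma E}$ for $\Mod_{Q/(Q-1)}$-almost every $E$. The plan is to show that, off a $\Mod_{Q/(Q-1)}$-null family of sets $E$, every $\mathcal{H}^{Q-1}\mres_{\Sigma E}$-null Borel set $A\subset X$ has $\mathcal{H}^{Q-1}(f(A\cap\Sigma E))=0$. Fix a Borel set $A$ with $\mathcal{H}^{Q-1}(A\cap\Sigma E)=0$ and decompose $\Sigma E=\bigcup_{\beta\in(0,1)\cap\Q}\Sigma_\beta E$; by the Remark after Definition~\ref{def:alpha-bdy} we have $\Sigma_\beta E\subset\partial^{\alpha(\beta)}E$, so it suffices to control $\mathcal{H}^{Q-1}(f(A\cap\partial^{\alpha(\beta)}E))$ for each rational $\beta$. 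The key point: a Borel set $U_0$ with $\mathcal{H}^{Q-1}(U_0\cap\Sigma E)=0$ — hence (via Theorem~\ref{thm:Ambr1}) with $P(E,U_0)=0$ — can be approximated from outside by an open set $V$ with $P(E,V)$ arbitrarily small, and $\mathcal{H}^{Q}(f(V))$ need not be small, so a direct measure argument fails. Instead I would run the covering/admissibility machinery of Proposition~\ref{prop:image-alpha-bdy} relative to the \emph{perimeter} rather than Lebesgue measure: for each fixed $\alpha>0$ and $\delta>0$, the family of $E$ for which $\mathcal{H}^{Q-1}(f(A\cap\partial^\alpha_\delta E))>M\,P(E,A)$ can be shown to have small $\Mod_{Q/(Q-1)}$ by building a test function $g$ out of the balls $B_i$ from a fine cover of $f(A)$ — exactly the construction $g(x)=\sup_i(s_i/r_i)^{Q-1}\chi_{2B_i}(x)$ — and using the inequality $\int_{\Sigma E}g\,d\mathcal{H}^{Q-1}\ge (\alpha/C)\sum_j s_j^{Q-1}$ together with the target-side Hausdorff estimate, while the $L^{Q/(Q-1)}$-norm of $g$ is bounded by $\mathcal{H}^Q$ of the cover, which can be taken comparable to $\mathcal{H}^{Q-1}$ of a thin neighborhood of $f(A)$. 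Letting the neighborhood shrink and taking a countable union over $M\to\infty$, $\delta\to0$, $\alpha$ ranging over $\{\alpha(\beta):\beta\in(0,1)\cap\Q\}$, and $A$ ranging over a countable generating family of Borel sets (e.g.\ balls, then all Borel sets by a monotone-class argument combined with the fact that $\mathcal{H}^{Q-1}\mres_{\Sigma E}$-null Borel sets are contained in $\mathcal{H}^{Q-1}\mres_{\Sigma E}$-null $G_\delta$ sets), we exhaust a $\Mod_{Q/(Q-1)}$-null family outside of which absolute continuity holds for all $A$ simultaneously.

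The main obstacle I anticipate is precisely the non-Ahlfors-regularity of $\Sigma E$: the set $A$ over which we test is constrained only through $P(E,A)=0$, not through $\mathcal{H}^{Q-1}(A)$ being small in any uniform sense, so the argument must be carried out with the perimeter measure playing the role that Lebesgue measure played in Proposition~\ref{prop:image-alpha-bdy}, and one has to be careful that the test function $g$ — whose $\mathcal{H}^Q$-integral is controlled by $\mathcal{H}^Q(f(U))$ for a bounded neighborhood $U$ of $A$ — can be made to have arbitrarily small $\Mod_{Q/(Q-1)}$-cost. The trick is that when $\mathcal{H}^{Q-1}(A\cap\Sigma E)=0$ the relevant cover is of $f(A\cap\partial^\alpha E)$, a set whose $\mathcal{H}^{Q-1}$-measure we are trying to bound, not of $f(U)$; so one fixes instead a parameter $M$ and localizes to the family $\{E:\mathcal{H}^{Q-1}(f(A\cap\partial^\alpha_\delta E))>M\}$ as in the proof of Claim~1, where the ambient cover of $f(U)$ contributes a harmless $\mathcal{H}^Q(f(U))<\infty$ and the factor $C_M^{Q/(Q-1)}=(C\eta(10)^{Q-1}/(\alpha M))^{Q/(Q-1)}\to0$ as $M\to\infty$ kills the modulus. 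The final bookkeeping — intersecting the countably many exceptional families and verifying that the resulting null family is independent of $A$ by a separability/monotone-class argument — is routine but must be stated carefully, since the quantifier ``for $\Mod_{Q/(Q-1)}$-almost every $E$, for all null $A$'' is the actual content.
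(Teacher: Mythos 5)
Your handling of the two equalities is fine: Lemma~\ref{lem:shift-density} gives $\Sigma f(E)=f(\Sigma E)$ and $\mbdy f(E)=f(\mbdy E)$, Theorem~\ref{thm:Ambr1} gives $\mathcal{H}^{Q-1}(\mbdy E\setminus\Sigma E)=0$ (and similarly for $f(E)$), and the bijectivity of $f$ then shows that the symmetric difference of the two sets inside the restrictions is $\mathcal{H}^{Q-1}$-null. The issue is your route to the absolute continuity itself, which departs from the paper's argument and contains a genuine gap.

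Your plan is to rerun the covering/admissibility machinery of Proposition~\ref{prop:image-alpha-bdy} using the perimeter measure in place of $\mathcal{H}^Q$. This cannot be made to work. In that proposition the scaling constant $C_M=C\eta(10)^{Q-1}/(\alpha M)$ that makes $C_M\,g$ admissible is $E$-independent because the threshold $M$ is a fixed number; if you replace $M$ by $M\cdot P(E,A)$ the scaling becomes $E$-dependent (and degenerates entirely when $P(E,A)=0$, which is exactly the case you care about), so there is no single admissible test function and the modulus bound collapses. If instead you keep a fixed $M$ and send $M\to\infty$ as in Claim~1, the vanishing of $C_M^{Q/(Q-1)}$ only shows that $\mathcal{H}^{Q-1}(f(A\cap\abdy E))$ is \emph{finite} for $\Mod_{Q/(Q-1)}$-a.e.\ $E$; it does not show it vanishes. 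To get vanishing you would need the analogue of Claim~2, which requires $\mathcal{H}^Q(f(A))=0$ --- but the only hypothesis on $A$ is $\mathcal{H}^{Q-1}(A\cap\Sigma E)=0$, which places no constraint whatsoever on $\mathcal{H}^Q(A)$ or $\mathcal{H}^Q(f(A))$. Indeed $A=X\setminus\Sigma E$ is an allowed choice and has full measure. There is also a quantifier problem you gesture at but cannot resolve by a monotone-class reduction: the class of allowed $A$ (those with $\mathcal{H}^{Q-1}(A\cap\Sigma E)=0$) depends on $E$, so a fixed countable generating family of $A$'s cannot be used to exhaust the exceptional $E$'s in advance.

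The missing idea is the Lipschitz decomposition of $X\setminus P$ into the sets
\[
A_{n,m}=\bigg\{x\in X \,:\, \frac1m\le\frac{L_f(x,r)}{r} \le m \text{ for } 0<r<\frac1n\bigg\},
\]
on each of which $f$ is locally $m$-Lipschitz by~\eqref{eq:Lip} and~\eqref{eq:infinitesimal-to-finite-dilatation}. With this decomposition, absolute continuity off the singular set $P$ is a purely pointwise statement and needs no modulus argument at all: if $N\subset\mbdy E$ with $\mathcal{H}^{Q-1}(N)=0$, then $\mathcal{H}^{Q-1}(f(N\cap A_{n,m}))\le m^{Q-1}\mathcal{H}^{Q-1}(N\cap A_{n,m})=0$, and summing over $n,m$ gives $\mathcal{H}^{Q-1}(f(N\setminus P))=0$. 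The only place the covering machinery enters is Proposition~\ref{prop:image-alpha-bdy}(3) applied with $U_0=P$ (legitimate since $\mathcal{H}^Q(P)=0$ and hence $\mathcal{H}^Q(f(P))=0$), which gives $\mathcal{H}^{Q-1}(f(P\cap\Sigma E))=0$ for $\Mod_{Q/(Q-1)}$-a.e.\ $E$. Your proposal never isolates $P$ and never invokes the Lipschitz estimate, and without them the argument does not close.
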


\begin{proof}
The last equality follows from Theorem~\ref{thm:Ambr1}.
To prove the absolute continuity, first
recall that $P=\{x\in X \,|\, L_f(x)\in\{0,\infty\} \}$. For $n,m\in\N$, set
\begin{equation}\label{eq:defn-Anm}
A_{n,m}=\bigg\{x\in X \,\bigg|\, \frac1m\le\frac{L_f(x,r)}{r} \le m \text{ for } 0<r<\frac1n\bigg\}.
\end{equation}
Then $\bigcup_{n\in\N} \bigcup_{m\in\N} A_{n,m} = X\setminus P$.
If $x,y\in A_{n,m}$ such that $d_X(x,y)<1/n$, then 
\begin{equation}\label{eq:Lip}
d_Y(f(x),f(y))\le m\, d_X(x,y),
\end{equation}
that is, $f$ is locally $m$-Lipschitz on $A_{n,m}$. Furthermore,
from~\eqref{eq:infinitesimal-to-finite-dilatation}
we have that $l_f(x,r)\ge L_f(x,r)/\eta(1)$, and so
\begin{equation}\label{eq:biLip}
d_Y(f(x),f(y))\ge \frac{1}{\eta(1) m}\, d_X(x,y),
\end{equation}
that is, $f$ is locally $\max\{1,\eta(1)\} m$-biLipschitz on $A_{n,m}$.
(In this proof, we only need $f$ to be Lipschitz, not biLipschitz.)
Note that $\mathcal{H}^Q(P)=0$
and thus $\mathcal{H}^Q(f(P))=0$,
and so by Proposition~\ref{prop:image-alpha-bdy}(3),
for $\Mod_{Q/(Q-1)}$-almost every $E\in\mathcal{L}$ we have 
$\mathcal{H}^{Q-1}(f(P\cap \Sigma E))=0$. For such $E\in\mathcal{L}$, 
fix $N\subset\mbdy E$ such that $\mathcal{H}^{Q-1}(N)=0$. 
Then for each $m,n\in\N$, by the local $m$-Lipschitz property of $f$ on $A_{n,m}$, we have 
\[
\mathcal{H}^{Q-1}\left(f\big(N\cap A_{n,m}\big)\right)
		\le m^{Q-1}\, \mathcal{H}^{Q-1}\left(N\cap A_{n,m}\right)=0.
\]
Thus
\[
\mathcal{H}^{Q-1}(f(N\setminus P))=\mathcal{H}^{Q-1}\bigg(f\big(N\cap \bigcup_{n,m\in\N}A_{n,m}\big)\bigg)
		\le \sum_{n,m\in\N}m^{Q-1}\, \mathcal{H}^{Q-1}\left(N\cap A_{n,m}\right)=0,
\]
 and therefore
\[
\mathcal{H}^{Q-1}(f(((\mbdy E\setminus P)\cup\Sigma E)\cap N)) 
\le \mathcal{H}^{Q-1}(f(N\setminus P)) +\mathcal{H}^{Q-1}(f(N\cap \Sigma E \cap P)) =0.
\]
Finally, note that $\mathcal{H}^{Q-1}(\partial^*E\setminus(P\cup\Sigma E))\le \mathcal{H}^{Q-1}(\partial^*E\setminus\Sigma E)=0$, 
and so the first identity in the proposition also holds by the absolute continuity of the pull-back measure
together with the fact that $\Sigma f(E)= f(\Sigma E)$ (see Lemma~\ref{lem:shift-density}).
\end{proof}

Recall the definition of $J_{f,E}$ from Definition~\ref{def:J_f,E}, and note that it is a function on $\Sigma E$.
 
\begin{lemma}\label{lem:finite-measure-of-Sigma}
For $\Mod_{Q/(Q-1)}$-almost every $E\in\mathcal{L}$ we have $\mathcal{H}^{Q-1}(\Sigma f(E))<\infty$.
\end{lemma}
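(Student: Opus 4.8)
The plan is to deduce this from Proposition~\ref{prop:image-alpha-bdy}(1) applied with a suitable exhaustion of the singular structure of $\Sigma E$ by the sets $\abdy E$. First I would recall that by Theorem~\ref{thm:Ambr1} and Lemma~\ref{lem:alpha-bdy}, for every $E$ of finite perimeter we have $\mathcal{H}^{Q-1}(\mbdy E\setminus\Sigma E)=0$ and $\Sigma_\beta E\subset\partial^{\alpha(\beta)}E$ for each $\beta\in(0,1)$, where $\alpha(\beta)=\beta/(C_IC_A)$. Since $\Sigma E=\bigcup_{\beta\in(0,1)}\Sigma_\beta E$ is an increasing union, it suffices to control $\mathcal{H}^{Q-1}(f(\Sigma_\beta E))$ for a countable cofinal sequence of $\beta$'s, say $\beta_k=1-1/k$; because $f(\Sigma E)=\Sigma f(E)$ by Lemma~\ref{lem:shift-density}, and $\Sigma_{\beta_k}E$ is increasing in $k$, monotone convergence of Hausdorff measure gives $\mathcal{H}^{Q-1}(\Sigma f(E))=\lim_{k\to\infty}\mathcal{H}^{Q-1}(f(\Sigma_{\beta_k}E))$.

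Next I would fix $k$ and apply Proposition~\ref{prop:image-alpha-bdy}(1) with the value $\alpha=\alpha(\beta_k)$: this tells us that for $\Mod_{Q/(Q-1)}$-almost every $E\in\mathcal{L}$ we have $\mathcal{H}^{Q-1}(f(\partial^{\alpha(\beta_k)}E))<\infty$, hence in particular $\mathcal{H}^{Q-1}(f(\Sigma_{\beta_k}E))\le\mathcal{H}^{Q-1}(f(\partial^{\alpha(\beta_k)}E))<\infty$ since $\Sigma_{\beta_k}E\subset\partial^{\alpha(\beta_k)}E$. Let $\mathcal{N}_k$ denote the exceptional family of $\Mod_{Q/(Q-1)}$-measure zero for this value of $k$. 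Taking the countable union $\mathcal{N}=\bigcup_{k\in\N}\mathcal{N}_k$, which still has $\Mod_{Q/(Q-1)}$-measure zero by countable subadditivity of the outer measure $\Mod_{Q/(Q-1)}$ (see the remark after Definition~\ref{def:modulus} and~\cite{F}), we conclude that for every $E\in\mathcal{L}\setminus\mathcal{N}$ and every $k$ we have $\mathcal{H}^{Q-1}(f(\Sigma_{\beta_k}E))<\infty$.

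The one remaining point is that finiteness for each $k$ does not immediately give finiteness of the limit, so I would observe more carefully that the bound is in fact uniform. Since $\alpha(\beta)=\beta/(C_IC_A)\ge 1/(2C_IC_A)=:\alpha_0$ for all $\beta\ge 1/2$, we have $\Sigma_{\beta}E\subset\partial^{\alpha(\beta)}E\subset\partial^{\alpha_0}E$ for every $\beta\in[1/2,1)$, and therefore $\Sigma E=\bigcup_{\beta\in(0,1)}\Sigma_\beta E\subset\partial^{\alpha_0}E$ up to the $\mathcal{H}^{Q-1}$-null set $\mbdy E\setminus\Sigma E$ (indeed $\Sigma_\beta E\subset\Sigma_{1/2}E\subset\partial^{\alpha_0}E$ already for $\beta<1/2$). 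Hence $\mathcal{H}^{Q-1}(\Sigma f(E))=\mathcal{H}^{Q-1}(f(\Sigma E))\le\mathcal{H}^{Q-1}(f(\partial^{\alpha_0}E))$, and the latter is finite for $\Mod_{Q/(Q-1)}$-almost every $E\in\mathcal{L}$ by a single application of Proposition~\ref{prop:image-alpha-bdy}(1) with $\alpha=\alpha_0$. This makes the countable union above unnecessary — one application suffices — and the main (and only) obstacle, namely passing from the pieces $\Sigma_\beta E$ to their union, dissolves once one notices that all the relevant $\partial^{\alpha(\beta)}E$ for $\beta$ near $1$ are contained in a single $\partial^{\alpha_0}E$. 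I expect the write-up to be short: identify $\alpha_0$, invoke $\Sigma E\subset\partial^{\alpha_0}E$ up to an $\mathcal{H}^{Q-1}$-null set, and quote Proposition~\ref{prop:image-alpha-bdy}(1).
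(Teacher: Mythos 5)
Your proposed shortcut has two gaps, one elementary and one substantive.

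First, the monotonicity of $\Sigma_\beta E$ in $\beta$ is reversed throughout. Since $x\in\Sigma_\beta E$ requires both lower densities to be at least $\beta$, raising $\beta$ only shrinks the set: $\Sigma_{\beta_2}E\subset\Sigma_{\beta_1}E$ when $\beta_1<\beta_2$. Thus $\Sigma E=\bigcup_{\beta\in(0,1)}\Sigma_\beta E$ is exhausted as $\beta\to 0^+$, not $\beta\to 1^-$, so $\beta_k=1-1/k$ does not give an increasing exhaustion, and your parenthetical claim that $\Sigma_\beta E\subset\Sigma_{1/2}E$ for $\beta<1/2$ is exactly backwards. Consequently there is no $\alpha_0>0$ with $\Sigma E\subset\partial^{\alpha_0}E$ as a genuine set inclusion for every $E$; as $\beta\to 0^+$ the points gained in $\Sigma_\beta E$ have no lower bound on their perimeter density. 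What \emph{is} true — and what the paper's proof actually invokes via the first part of Lemma~\ref{lem:alpha-bdy} — is the measure-theoretic statement $\mathcal{H}^{Q-1}(\Sigma E\setminus\partial^{\alpha_0}E)=0$, but that statement comes from the nontrivial Theorem~\ref{thm:Ambr1} (the perimeter measure is concentrated on a single $\Sigma_\gamma E$, so $\mathcal{H}^{Q-1}(\mbdy E\setminus\Sigma_\gamma E)=0$), not from a soft manipulation of the $\Sigma_\beta E$.

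Second, even granting $\mathcal{H}^{Q-1}(\Sigma E\setminus\partial^{\alpha_0}E)=0$, your final inequality $\mathcal{H}^{Q-1}(f(\Sigma E))\le\mathcal{H}^{Q-1}(f(\partial^{\alpha_0}E))$ does not follow: writing $\Sigma E=(\Sigma E\cap\partial^{\alpha_0}E)\cup N$ with $\mathcal{H}^{Q-1}(N)=0$, there is no a priori reason that $\mathcal{H}^{Q-1}(f(N))=0$ — quasiconformal maps satisfy Lusin's condition $(N)$ at dimension $Q$, not at dimension $Q-1$. This is exactly what Proposition~\ref{prop:abs-cont} provides: after discarding a $\Mod_{Q/(Q-1)}$-null family, $f_\#\mathcal{H}^{Q-1}\mres_{\Sigma f(E)}\ll\mathcal{H}^{Q-1}\mres_{\Sigma E}$, which yields $\mathcal{H}^{Q-1}(\Sigma f(E)\setminus f(\partial^{\alpha_0}E))=0$. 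The paper's proof uses Lemma~\ref{lem:alpha-bdy}, Proposition~\ref{prop:abs-cont}, and then Proposition~\ref{prop:image-alpha-bdy}(1); you have the first and last ingredients but omit the middle one, which is where the real work is.
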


\begin{proof}
By Lemma~\ref{lem:alpha-bdy} there exists $\alpha_0>0$ such that 
$\mathcal{H}^{Q-1}(\Sigma E \setminus \partial^{\alpha_0} E)=0$
for every $E\in\mathcal L$.
Then by Proposition~\ref{prop:abs-cont} we know that 
$\mathcal{H}^{Q-1}(\Sigma f(E)\setminus f(\partial^{\alpha_0} E))=0$ for $\Mod_{Q/(Q-1)}$-almost every $E\in\mathcal{L}$.
Finally, by Proposition~\ref{prop:image-alpha-bdy}(1) we have $\mathcal{H}^{Q-1}(f(\partial^{\alpha_0} E))<\infty$
after eliminating a further family of $\Mod_{Q/(Q-1)}$-zero from $\mathcal{L}$.
\end{proof}

\begin{lemma}\label{thm:J_E^(Q/Q-1)-leq-Jac}
There exists $C>0$ such that for $\Mod_{Q/(Q-1)}$-almost every $E\in\mathcal{L}$, 
\[
J_{f,E}(x) \le C {J_f(x)}^{(Q-1)/Q}
\]
for $\mathcal{H}^{Q-1}\mres_{\Sigma E}$-almost every $x$.
\end{lemma}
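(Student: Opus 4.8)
The plan is to estimate the Radon--Nikodym derivative $J_{f,E}$ pointwise from above by covering $\Sigma E$ by small balls and comparing perimeter measures. By Lemma~\ref{lem:finite-measure-of-Sigma} and Proposition~\ref{prop:abs-cont}, for $\Mod_{Q/(Q-1)}$-almost every $E\in\mathcal{L}$ we have $\mathcal{H}^{Q-1}(\Sigma f(E))<\infty$ and $f_\#\mathcal{H}^{Q-1}\mres_{\Sigma f(E)}\ll\mathcal{H}^{Q-1}\mres_{\Sigma E}$, so $J_{f,E}$ is well defined $\mathcal{H}^{Q-1}\mres_{\Sigma E}$-a.e.\ and equals
\[
J_{f,E}(x)=\lim_{r\to 0^+}\frac{\mathcal{H}^{Q-1}(\Sigma f(E)\cap f(B(x,r)))}{\mathcal{H}^{Q-1}(\Sigma E\cap B(x,r))}.
\]
First I would fix such a good set $E$ and, using that $\mathcal{H}^Q(P)=\mathcal{H}^Q(f(P))=0$ together with Proposition~\ref{prop:image-alpha-bdy}(3), discard the portion of $\Sigma E$ meeting $P$; thus we may assume $0<L_f(x)<\infty$ at the point $x$ under consideration. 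At such $x$, for small $r>0$ we have the two-sided control $B(f(x),l_f(x,r))\subset f(B(x,r))\subset B(f(x),L_f(x,r))$, where $l_f(x,r)\le L_f(x,r)\le \eta(1)\,l_f(x,r)$ by~\eqref{eq:infinitesimal-to-finite-dilatation}. Hence $\mathcal{H}^{Q-1}(\Sigma f(E)\cap f(B(x,r)))\le \mathcal{H}^{Q-1}(\Sigma f(E)\cap B(f(x),L_f(x,r)))$.

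Next I would bound the numerator from above. By Theorem~\ref{thm:Ambr1} applied to $f(E)$ in $Y$,
\[
\mathcal{H}^{Q-1}(\Sigma f(E)\cap B(f(x),s))\le \frac{1}{\tilde\alpha}\,P(f(E),B(f(x),s)),
\]
but rather than the perimeter I want the raw bound $P(f(E),B(f(x),s))\le C\,s^{Q-1}$ coming from Ahlfors regularity of $Y$ together with the fact that $f(E)$ has finite perimeter: indeed, by the relative isoperimetric inequality and the standard density estimates for sets of finite perimeter, $P(f(E),B(f(x),s))\lesssim s^{-1}\mathcal{H}^Q(B(f(x),s))\lesssim s^{Q-1}$ whenever $f(x)\in\mbdy f(E)$. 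So the numerator is $\le C\,L_f(x,r)^{Q-1}$. For the denominator I use that $\mathcal{H}^{Q-1}\mres_{\Sigma E}$ is comparable to $P(E,\cdot)$ by~\eqref{eq:H and perimeter}, and that at $\mathcal{H}^{Q-1}$-a.e.\ $x\in\mbdy E$ we have the lower bound $P(E,B(x,r))\ge \alpha\,r^{Q-1}$ from Lemma~\ref{lem:alpha-bdy} (after discarding a further $\mathcal{H}^{Q-1}$-null set of $\Sigma E$, and noting this exceptional set is harmless since the pull-back measure is absolutely continuous). Thus $\mathcal{H}^{Q-1}(\Sigma E\cap B(x,r))\ge c\,r^{Q-1}$ for all small $r$.

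Combining, for all sufficiently small $r$,
\[
\frac{\mathcal{H}^{Q-1}(\Sigma f(E)\cap f(B(x,r)))}{\mathcal{H}^{Q-1}(\Sigma E\cap B(x,r))}
\le \frac{C\,L_f(x,r)^{Q-1}}{c\,r^{Q-1}} = C'\left(\frac{L_f(x,r)}{r}\right)^{Q-1}.
\]
Taking $r\to 0^+$ along a sequence realizing $L_f(x)=\limsup_{r\to0^+}L_f(x,r)/r$ gives $J_{f,E}(x)\le C'\,L_f(x)^{Q-1}$; here one must be a little careful that the $\limsup$ in $L_f$ and the limit defining $J_{f,E}$ are compatible, but since $J_{f,E}(x)$ exists as a genuine limit (differentiation of the measure $f_\#\mathcal{H}^{Q-1}\mres_{\Sigma f(E)}$ with respect to the doubling measure $\mathcal{H}^{Q-1}\mres_{\Sigma E}$, which exists a.e.), any subsequential evaluation of the ratio upper bound along $r\to 0$ suffices. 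Finally, by Lemma~\ref{lem:Jac-geq-L^Q} we have $L_f(x)^Q\le C\,J_f(x)$, hence $L_f(x)^{Q-1}\le C^{(Q-1)/Q}J_f(x)^{(Q-1)/Q}$, and the claimed estimate $J_{f,E}(x)\le C\,J_f(x)^{(Q-1)/Q}$ follows. The main obstacle I anticipate is the first of the two density estimates, namely establishing $P(f(E),B(f(x),s))\le C\,s^{Q-1}$ uniformly for small $s$ at points of $\mbdy f(E)$: this is where one genuinely uses that $f(E)$ has locally finite perimeter and the density/isoperimetric machinery in $Y$, and one must make sure the bound is with a constant independent of $E$ and valid at $\mathcal{H}^{Q-1}$-a.e.\ boundary point (which again may require deleting an $\mathcal{H}^{Q-1}\mres_{\Sigma E}$-null set, justified by the absolute continuity in Proposition~\ref{prop:abs-cont}).
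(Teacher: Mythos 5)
Your argument takes a genuinely different route from the paper's, and in outline it can be made to work, but as written it has two concrete problems in the step establishing the upper density bound on the image side.

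The paper's proof does not use any density estimate for the perimeter (or $\mathcal{H}^{Q-1}$) of $f(E)$ at all. Instead it applies Lusin's theorem to $L_f$ to produce open sets $U_k$ with $L_f|_{X\setminus U_k}$ continuous and $P\subset U_k$, decomposes $\Sigma E\setminus U_k$ into Borel pieces $A_n=\{x:\,L_f(x,r)/r\le 2L_f(x) \text{ for } 0<r<1/n\}$ on which $f$ is Lipschitz with constant essentially $2L_f(x)$, and evaluates $J_{f,E}$ at Lebesgue points of $\chi_{A_n\setminus U_k}J_{f,E}$ so that the denominator $\mathcal{H}^{Q-1}(B(x,r)\cap\Sigma E)$ cancels against the Lipschitz estimate for the numerator. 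No two-sided perimeter density estimate is invoked; the only structural input is Lemma~\ref{lem:Jac-geq-L^Q}. Your approach trades Lusin/Lebesgue-point arguments for Federer-type density estimates, which is a legitimate alternative.

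However, the specific justification you give for $P(f(E),B(f(x),s))\lesssim s^{Q-1}$ is wrong on two counts. First, the relative isoperimetric inequality gives a lower bound on perimeter in terms of measure, namely $\min\{\mu(B\cap E),\mu(B\setminus E)\}\le C\,\rad(B)\,P(E,B)$; it does not yield an upper bound of the form $P(E,B)\lesssim \rad(B)^{-1}\mu(B)$. Second, and more seriously, invoking Theorem~\ref{thm:Ambr1} for $f(E)$ presupposes that $f(E)$ has finite perimeter, and this is precisely what is not known for a general $E\in\mathcal{L}$ (it is established only for $E\in\mathcal{L}\setminus\mathcal{L}_P$ in Proposition~\ref{prop:f(E)-finite-perim}, whereas the present lemma is asserted for $\Mod_{Q/(Q-1)}$-a.e.\ $E\in\mathcal{L}$). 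The correct replacement is to work directly with $\mathcal{H}^{Q-1}\mres_{\Sigma f(E)}$: by Lemma~\ref{lem:finite-measure-of-Sigma} one has $\mathcal{H}^{Q-1}(\Sigma f(E))<\infty$ for $\Mod_{Q/(Q-1)}$-a.e.\ $E$, and then Federer's density theorem for Hausdorff measures gives $\limsup_{s\to0^+}\mathcal{H}^{Q-1}(\Sigma f(E)\cap B(y,s))/s^{Q-1}\le C$ for $\mathcal{H}^{Q-1}$-a.e.\ $y$, with a universal constant. That is the estimate you actually need for the numerator.

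There is also a gap in how you dismiss the exceptional set in $Y$. Proposition~\ref{prop:abs-cont} gives $f_\#\mathcal{H}^{Q-1}\mres_{\Sigma f(E)}\ll\mathcal{H}^{Q-1}\mres_{\Sigma E}$, i.e.\ $X$-null sets map to $Y$-null sets; it does not say that the preimage of a $\mathcal{H}^{Q-1}$-null subset $N'\subset Y$ is $\mathcal{H}^{Q-1}\mres_{\Sigma E}$-null. Indeed $\mathcal{H}^{Q-1}\mres_{\Sigma E}(f^{-1}(N'))$ may well be positive (for instance if $P\cap\Sigma E$ carries positive $\mathcal{H}^{Q-1}$-measure). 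The correct way to dispose of this set is to observe that, since $J_{f,E}$ is the Radon--Nikodym derivative of $f_\#\mathcal{H}^{Q-1}\mres_{\Sigma f(E)}$, one has
\[
\int_{f^{-1}(N')\cap\Sigma E}J_{f,E}\,d\mathcal{H}^{Q-1}=\mathcal{H}^{Q-1}\bigl(\Sigma f(E)\cap N'\bigr)=0,
\]
so $J_{f,E}=0$ for $\mathcal{H}^{Q-1}\mres_{\Sigma E}$-a.e.\ $x\in f^{-1}(N')$ and the desired inequality holds there trivially. With these two corrections your density-based approach goes through and is a genuine alternative to the paper's Lusin/Lebesgue-point argument.
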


\begin{proof}
From Proposition~\ref{prop:abs-cont} we know that for $\Mod_{Q/(Q-1)}$-almost every $E$,
$f_\#\mathcal{H}^{Q-1}\mres_{\Sigma f(E)}$ is absolutely continuous with respect to 
$\mathcal{H}^{Q-1}\mres_{\mbdy E}=\mathcal{H}^{Q-1}\mres_{\Sigma E}$.
Furthermore, from Lemma~\ref{lem:finite-measure-of-Sigma} we know that $\mathcal{H}^{Q-1}(\Sigma f(E))<\infty$ 
for $\Mod_{Q/(Q-1)}$-almost every $E$. We focus only on such $E\in\mathcal{L}$ now. 

Note that $L_f$ is a locally integrable function on $X$, so by Lusin's Theorem, for each $k\in\mathbb{N}$ 
there is some open set $U_k\subset X$
such that $\mu(U_k)\le 2^{-k}$ and $L_f\vert_{X\setminus U_k}$ is continuous. 
By enlarging $U_k$ if necessary, we can also assume that for each $k\in\N$,
$P\subset U_k$.
Let $U=\bigcap_{k\in\N} U_k$. Then $\mu(U)=0$.
Let $\mathcal{L}_0=\{E\in\mathcal{L}:\mathcal{H}^{Q-1}(U\cap\Sigma E)>0\}$.
Then $\infty\chi_U$ is admissible for computing $\Mod_{Q/(Q-1)}(\mathcal{L}_0)$, and so
$\Mod_{Q/(Q-1)}(\mathcal{L}_0)\le \int_X \infty\chi_U \ d\mu=0$. We ignore such $E$ as well.

Observe that for every $x\in X\setminus U$ we have $L_f(x)>0$.
For $n\in\mathbb{N}$ we set
\[
A_{n}=\{x\in\Sigma E :\, L_f(x,r)/r\le 2L_f(x)\text{ for }0<r<1/n\}. 
\]
It is not difficult to show that
$A_{n}$ is a Borel set by writing it as an intersection of Borel sets, one for each rational $r\in(0,1/n)$. 
Now by Proposition~\ref{prop:abs-cont} and the Radon-Nikodym Theorem, 
\[
\int_{\Sigma E} J_{f,E} \ d\mathcal{H}^{Q-1} = \mathcal{H}^{Q-1}(f(\Sigma E)).
\]
Since $\mathcal{H}^{Q-1}(f(\Sigma E))=\mathcal{H}^{Q-1}(\Sigma f(E))$ is finite, 
$J_{f,E}\in L^1(\Sigma E, \mathcal{H}^{Q-1})$.
For $k,n\in\mathbb{N}$ let $E_{n}^k$ denote the collection of all $x\in\Sigma E$
that are not Lebesgue points of $\chi_{A_{n}\setminus U_k}J_{f,E}$. Then for each $k\in\mathbb{N}$ we have
$\mathcal{H}^{Q-1}(\bigcup_{n\in\mathbb{N}}E_{n}^k)=0$. For
$x\in \Sigma E\setminus \bigcap_{k\in\mathbb{N}}\left(U_k\cup\bigcup_{n\in\mathbb{N}}E_{n}^k\right)$,
there is some $k\in\mathbb{N}$ and $n\in\mathbb{N}$ such that $x\in A_{n}$ but 
$x\notin E_{n}^k\cup U_k$. Therefore
\begin{align}\label{eq:A2}
J_{f,E}(x) &=\lim_{r\to 0^+} \avgint_{B(x,r)\cap\Sigma E}\chi_{A_{n}\setminus U_k}\, J_{f,E} \ d\mathcal{H}^{Q-1} \notag \\
	&=\lim_{r\to 0^+}\frac{\int_{B(x,r)\cap A_{n}\setminus U_k}J_{f,E} \ d\mathcal{H}^{Q-1}}{\mathcal{H}^{Q-1}(B(x,r)\cap\Sigma E)} 
	\notag \\
	&=\lim_{r\to 0^+}\frac{f_{\#}\mathcal{H}^{Q-1}(B(x,r)\cap A_{n}\setminus U_k)}{\mathcal{H}^{Q-1}(B(x,r)\cap\Sigma E)}.
\end{align}
From an argument similar to~\eqref{eq:Lip} we know that $f$ is $2\sup_{B(x,r)\setminus U_k}L_f$--Lipschitz continuous
on $B(x,r)\cap A_n\setminus U_k$ when  $0<r<1/2n$, and so
\begin{align*}
f_{\#}\mathcal{H}^{Q-1}(B(x,r)\cap A_{n}\setminus U_k)
 &= \mathcal{H}^{Q-1}(f(B(x,r)\cap A_n\setminus U_k))\\
& \le \left[2\sup_{B(x,r)\setminus U_k}L_f\right]^{Q-1}\, \mathcal{H}^{Q-1}(B(x,r)\cap A_n\setminus U_k)\\
&\le \left[2\sup_{B(x,r)\setminus U_k}L_f\right]^{Q-1}\, \mathcal{H}^{Q-1}(B(x,r)\cap\Sigma E),
\end{align*}
and so by~\eqref{eq:A2} and the continuity of $L_f$ in $X\setminus U_k$,
\[
J_{f,E}(x)\le \limsup_{r\to 0^+}\left[2\sup_{B(x,r)\setminus U_k}L_f\right]^{Q-1}=2^{Q-1}\, L_f(x)^{Q-1}.
\]
Now by applying Lemma~\ref{lem:Jac-geq-L^Q}, we obtain
\[
J_{f,E}(x) \le C{J_f(x)}^{(Q-1)/Q}.
\]
This holds for all $x\in\Sigma E\setminus (\bigcap_{k\in\N} (U_k\cup \bigcup_{n\in\N}E_n^k))$. As
$E\notin \mathcal L_0$ and $\mathcal{H}^{Q-1}(\bigcap_{k\in\N}\bigcup_{n\in\N} E^k_n)=0$, we have
\[
\mathcal{H}^{Q-1}\left(\Sigma E\cap\bigcap_{k\in\N} \left(U_k\cup \bigcup_{n\in\N}
E_n^k\right)\right)=0,
\]
and so the claim holds.
\end{proof}

\begin{proof}[Proof of Theorem~\ref{thm:main-1}]  
Let $\mathcal{L}_0$ be the set of $E\in\mathcal{L}$ for which the conclusion of either Proposition~\ref{prop:abs-cont} or 
Lemma~\ref{thm:J_E^(Q/Q-1)-leq-Jac} fails.  Then by those results, we know that $\Mod_{Q/(Q-1)}(\mathcal{L}_0)=0$.
Let $E\in\mathcal L\setminus \mathcal{L}_0$.

From Proposition~\ref{prop:abs-cont} and Lemma~\ref{lem:shift-density}
we know that the pull-back measure
$f_\# \mathcal{H}^{Q-1}\mres_{\Sigma f(E)}=f_\# \mathcal{H}^{Q-1}\mres_{f(\Sigma E)}$
	is absolutely continuous with respect to
$\mathcal{H}^{Q-1}\mres_{\Sigma E}$. 
Therefore, whenever $\varphi$ is a nonnegative Borel function on $Y$, we have
\[
\int_{ f(\Sigma E)}\varphi\, d\mathcal{H}^{Q-1}=\int_{\Sigma E}\varphi\circ f\, J_{f,E}\, d\mathcal{H}^{Q-1}.
\]
From Lemma~\ref{thm:J_E^(Q/Q-1)-leq-Jac} we know that
the corresponding Radon-Nikodym derivative is dominated by $C_0\, J_f^{(Q-1)/Q}$.
Suppose $\rho:Y\to[0,\infty]$ is admissible for computing
$\Mod_{Q/(Q-1)} (f\mathcal{L})$. 
Define $\widetilde\rho:X\to[0,\infty]$ by 
\[
\widetilde\rho=C_0\, (\rho\circ f){J_f}^{(Q-1)/Q}.
\]
Then $\widetilde\rho$ is admissible for calculating $\Mod_{Q/(Q-1)}(\mathcal{L}\setminus\mathcal{L}_0)$  
since by Proposition~\ref{prop:abs-cont} and the change of variables formula,
\begin{align*}
 \int_{\Sigma E} \widetilde\rho \ d\mathcal{H}^{Q-1} 
	=  \int_{\Sigma E} C_0\,  (\rho\circ f) {J_f}^{(Q-1)/Q} \ d\mathcal{H}^{Q-1}  
	&\ge \int_{\Sigma E} (\rho\circ f) J_{f,E} \ d\mathcal{H}^{Q-1} \\
	&= \int_{f(\Sigma E)} \rho \ d\mathcal{H}^{Q-1} \\
	&= \int_{\Sigma f(E)}\rho\ d\mathcal{H}^{Q-1} \ge1.
\end{align*}
It follows that
\begin{align*}
\Mod_{Q/(Q-1)}(\mathcal{L})= \Mod_{Q/(Q-1)}(\mathcal{L}\setminus\mathcal{L}_0) 
	&\le C_A\int_X \widetilde\rho^{Q/(Q-1)} \ d\mathcal{H}^Q \\
	&= C_A\int_X \left(C_0\, (\rho\circ f){J_f}^{(Q-1)/Q}\right)^{Q/(Q-1)} \ d\mathcal{H}^Q  \\
	&\le C \int_X (\rho\circ f)^{Q/(Q-1)} J_f \ d\mathcal{H}^Q  \\
	&= C \int_Y \rho^{Q/(Q-1)} \ d\mathcal{H}^Q .
\end{align*}
Taking the infimum over admissible $\rho$, we obtain
\[
\frac 1C \Mod_{Q/(Q-1)}(\mathcal{L})\le \Mod_{Q/(Q-1)}(f\mathcal{L}).
\]
We cannot directly apply the argument to $\fin$ with respect to
$f\mathcal{L}^{\prime}$ as
we do not know that that family consists solely of sets of finite perimeter. For
$E\in\mathcal{L}$ we know that $f(E)\in f\mathcal{L}$ is of finite perimeter if 
and only if $\mathcal{H}^{Q-1}(\mbdy f(E))<\infty$, see Theorem~\ref{thm:H^(Q-1)-meas-finite}.
A priori we do not know whether $\mathcal{H}^{Q-1}(\mbdy f(E))=\infty$ implies that
$\mathcal{H}^{Q-1}(f(\Sigma E))=\infty$.
So instead we provide a direct proof.
Recall that $P=\{x\in X\, :\, L_f(x)=0\}\cup\{x\in X\, :\, L_f(x)=\infty\}$, and 
as in~\eqref{eq:defn-Anm}, we set
\[
A_{n,m}=\bigg\{x\in X \,\bigg|\, \frac1m\le\frac{L_f(x,r)}{r} \le m \text{ for } 0<r<\frac1n\bigg\}.
\]
Then, as noted in~\eqref{eq:biLip}, $f$ is $\max\{1,\eta(1)\} m$--biLipschitz on $A_{n,m}$ and
\[
X\setminus P=\bigcup_{n,m\in\N}A_{n,m}.
\]
By the definition of $\mathcal{L}^{\prime}$ we have that
$\mathcal{H}^{Q-1}(P\cap\Sigma E)=0$. Therefore
\[
\mathcal{H}^{Q-1}
	\left(\Sigma E\setminus \bigcup_{n,m\in\N} A_{n,m}\right)=0.
\]
As $f$ is locally (that is, at scales smaller than $1/n$) biLipschitz on each $A_{n,m}$, we now have that
\begin{equation}\label{eq:meas-abs-cont}
\mathcal{H}^{Q-1}\mres_{\Sigma E}\ll f_\#\mathcal{H}^{Q-1}\mres_{\Sigma f(E)}.
\end{equation}
Equivalently,
$f^{-1}_\#\mathcal{H}^{Q-1}\mres_{\Sigma E}\ll \mathcal{H}^{Q-1}\mres_{\Sigma f(E)}$.
Now a direct adaptation of the proof of Lemma~\ref{thm:J_E^(Q/Q-1)-leq-Jac} tells us that
the Radon-Nikodym derivative of $f^{-1}_\#\mathcal{H}^{Q-1}\mres_{\Sigma E}$ with respect to 
$\mathcal{H}^{Q-1}\mres_{\Sigma f(E)}$, denoted $J_{f^{-1},fE}$, satisfies
for $\Mod_{Q/(Q-1)}$-almost every $f(E)$,
with $E\in \mathcal{L}^{\prime}$, 
that
\[
J_{f^{-1},fE}(y)\le C\, J_{f^{-1}}(y)^{(Q-1)/Q},
\]
for $\mathcal{H}^{Q-1}\mres_{\Sigma f(E)}$-almost every $y$.
Now we can apply the proof of the first part of this theorem
to obtain that
if $\rho:X\to[0,\infty]$ is admissible for computing $\Mod_{Q/(Q-1)}(\mathcal{L}^{\prime})$,
then 
\[
\widehat{\rho}=C_1(\rho\circ f^{-1})\, J_{f^{-1}}^{(Q-1)/Q}
\]
is admissible for computing $\Mod_{Q/(Q-1)}(f\mathcal{L}^{\prime})$. Thus 
we get the desired comparison stated in the last part of the theorem.
\end{proof}

\section{Proof of Theorem~\ref{thm:main-3}}

In this section we focus on the proof of Theorem~\ref{thm:main-3}. The proof uses the tools developed in
the previous section. Recall that 
\[
\mathcal{L}_P=\{E\in\mathcal{L} : \mathcal{H}^{Q-1}(f((\mbdy E\setminus \Sigma E)\cap P))>0 \text{ or }
 \mathcal{H}^{Q-1}(P\cap\Sigma E)>0\}.
\]

\begin{proposition}\label{prop:f(E)-finite-perim}
For $\Mod_{Q/(Q-1)}$-almost every $E\in\mathcal{L}\setminus\mathcal{L}_P$, 
$\mathcal{H}^{Q-1}(\mbdy f(E))$ is finite and hence $f(E)$ is of finite perimeter.
\end{proposition}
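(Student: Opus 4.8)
The plan is to reduce the finiteness of $\mathcal{H}^{Q-1}(\mbdy f(E))$ to results already obtained in Section~5 by decomposing $\mbdy E$ into its ``good'' part $\Sigma E$ and the remainder $\mbdy E\setminus\Sigma E$, and then splitting the remainder according to whether a point lies in the singular set $P$ or not. Throughout I would use the identities $\mbdy f(E)=f(\mbdy E)$ and $\Sigma f(E)=f(\Sigma E)$ from Lemma~\ref{lem:shift-density}, so that $\mbdy f(E)=f(\Sigma E)\cup f(\mbdy E\setminus\Sigma E)$.

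First I would handle $f(\Sigma E)=\Sigma f(E)$: by Lemma~\ref{lem:finite-measure-of-Sigma}, after discarding a $\Mod_{Q/(Q-1)}$-null subfamily of $\mathcal{L}$ (and hence of $\mathcal{L}\setminus\mathcal{L}_P$) we have $\mathcal{H}^{Q-1}(\Sigma f(E))<\infty$. So it remains only to show $\mathcal{H}^{Q-1}(f(\mbdy E\setminus\Sigma E))=0$; recall that $\mathcal{H}^{Q-1}(\mbdy E\setminus\Sigma E)=0$ by Theorem~\ref{thm:Ambr1}. I would split $\mbdy E\setminus\Sigma E=[(\mbdy E\setminus\Sigma E)\cap P]\cup[(\mbdy E\setminus\Sigma E)\setminus P]$. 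The image of the first piece is $\mathcal{H}^{Q-1}$-null precisely because $E\notin\mathcal{L}_P$, by the very definition of $\mathcal{L}_P$. For the second piece I would use the exhaustion $X\setminus P=\bigcup_{n,m\in\N}A_{n,m}$ from~\eqref{eq:defn-Anm}, on each of which $f$ is locally $m$-Lipschitz as in~\eqref{eq:Lip}; since $\mathcal{H}^{Q-1}((\mbdy E\setminus\Sigma E)\cap A_{n,m})=0$, covering by balls of radius less than $1/n$ (exactly as in the proof of Proposition~\ref{prop:abs-cont}) gives $\mathcal{H}^{Q-1}(f((\mbdy E\setminus\Sigma E)\cap A_{n,m}))\le m^{Q-1}\,\mathcal{H}^{Q-1}((\mbdy E\setminus\Sigma E)\cap A_{n,m})=0$, and countable subadditivity of $\mathcal{H}^{Q-1}$ finishes this piece.

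Combining, $\mathcal{H}^{Q-1}(\mbdy f(E))=\mathcal{H}^{Q-1}(f(\mbdy E))\le \mathcal{H}^{Q-1}(\Sigma f(E))+\mathcal{H}^{Q-1}(f(\mbdy E\setminus\Sigma E))<\infty$, and then $f(E)$, which is measurable because $f$ is a homeomorphism, is of finite perimeter by Theorem~\ref{thm:H^(Q-1)-meas-finite}.

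I do not expect a genuine obstacle: the proposition is essentially the bookkeeping that isolates exactly the two mechanisms by which the image boundary could carry infinite $\mathcal{H}^{Q-1}$-measure, and both are excluded by the definition of $\mathcal{L}_P$ together with the Section~5 estimates. The only points requiring care are that the exceptional family here is the $\Mod_{Q/(Q-1)}$-null subfamily coming from Lemma~\ref{lem:finite-measure-of-Sigma} (not a further restriction inside $\mathcal{L}_P$), and that the local Lipschitz estimate on Hausdorff measure must be applied on each $A_{n,m}$ at scales below $1/n$, which is legitimate since the definition of $\mathcal{H}^{Q-1}$ allows the covering radii to tend to $0$.
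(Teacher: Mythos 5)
Your proof is correct and follows essentially the same overall strategy as the paper's: decompose $\mbdy E$ into a ``good'' part whose image is handled by the Section~5 estimates and a residual $\mathcal{H}^{Q-1}$-null part, then control the image of the residual according to whether it meets $P$. The difference is organizational. The paper writes $\mbdy E=\abdy E\cup(\mbdy E\setminus\abdy E)$ with $\alpha$ from Lemma~\ref{lem:alpha-bdy}, applies Proposition~\ref{prop:image-alpha-bdy}(1) to $f(\abdy E)$ and Proposition~\ref{prop:abs-cont} to $f(\mbdy E\setminus(P\cup\abdy E))$. You instead split $\mbdy E=\Sigma E\cup(\mbdy E\setminus\Sigma E)$, call on Lemma~\ref{lem:finite-measure-of-Sigma} for $f(\Sigma E)=\Sigma f(E)$, and re-run the $A_{n,m}$-exhaustion argument directly for the off-$P$ residual instead of citing Proposition~\ref{prop:abs-cont}. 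Your version has a small advantage in transparency: the paper's displayed chain as literally written omits the piece $\mathcal{H}^{Q-1}(f((\mbdy E\setminus\abdy E)\cap P))$ --- benign, since it can be bounded by splitting into $\Sigma E\cap P$ (image null by Proposition~\ref{prop:abs-cont} or Proposition~\ref{prop:image-alpha-bdy}(3)) and $(\mbdy E\setminus\Sigma E)\cap P$ (image null by $E\notin\mathcal{L}_P$) --- but the $E\notin\mathcal{L}_P$ hypothesis is only used tacitly there, whereas your decomposition makes exactly where and how $\mathcal{L}_P$ enters explicit. Your remarks about the source of the exceptional family and about restricting to scales $r<1/n$ when applying the local Lipschitz estimate on $A_{n,m}$ are both correct and are precisely the points one must watch.
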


\begin{proof} 
By Lemma~\ref{lem:alpha-bdy} there exists $\alpha>0$ such that $\mathcal{H}^{Q-1}(\mbdy E \setminus \abdy E)=0$.
By Proposition~\ref{prop:image-alpha-bdy}(1), for
$\Mod_{Q/(Q-1)}$-almost every $E$ we have that $\mathcal{H}^{Q-1}(f(\abdy E))$ is finite.
Then by Proposition~\ref{prop:abs-cont}, we have $\mathcal{H}^{Q-1}(f(\mbdy E\setminus (P\cup \abdy E)))=0$.
For $E\in\mathcal{L}\setminus\mathcal{L}_P$ that satisfy these conditions (and $\Mod_{Q/(Q-1)}$-almost all of them do),
\begin{align*}
\mathcal{H}^{Q-1}(f(\mbdy E))
&\le \mathcal{H}^{Q-1}(f(\abdy E))+\mathcal{H}^{Q-1}(f(\mbdy E\setminus (P\cup \abdy E)))\\
&= \mathcal{H}^{Q-1}(f(\abdy E))<\infty.
\end{align*}
An application of Theorem~\ref{thm:H^(Q-1)-meas-finite}
and Lemma~\ref{lem:shift-density} completes the proof.
\end{proof}

From the above chain of inequalities we see that for
$\Mod_{Q/(Q-1)}$-almost all $E\in\mathcal{L}\setminus\mathcal{L}_P$
we have $\mathcal{H}^{Q-1}(f(\mbdy E))=\mathcal{H}^{Q-1}(f(\abdy E))$
and thus
\begin{equation}\label{eq:mbdy vs Sigma E}
\mathcal{H}^{Q-1}(f(\mbdy E))=\mathcal{H}^{Q-1}(f(\Sigma E)).
\end{equation}

\begin{proof}[Proof of Theorem~\ref{thm:main-3}]
The above proposition implies the validity of the first claim in the theorem.

By Proposition~\ref{prop:abs-cont} and \eqref{eq:mbdy vs Sigma E},
 the first absolute continuity of \eqref{eq:mutual-abs-cont-of-surfaces}
 holds for $\Mod_{Q/(Q-1)}$--almost every $E\in\mathcal{L}\setminus\mathcal{L}_P$.
 By \eqref{eq:meas-abs-cont} and the fact that
 $\mathcal{H}^{Q-1}\mres_{\partial^*E}=\mathcal{H}^{Q-1}\mres_{\Sigma E}$,
 the second holds for
 $\Mod_{Q/(Q-1)}$--almost every
 $E\in\mathcal{L}^{\prime}\supset \mathcal{L}\setminus\mathcal{L}_P$.

The first inequality of~\eqref{eq:main mod estimates}
is verified by applying the first part of Theorem~\ref{thm:main-1} to 
$\mathcal{L}\setminus\mathcal{L}_P$ and $f$.
Next note that if $E\in \mathcal{L}\setminus \mathcal{L}_P$, then
$E\in \mathcal{L}^\prime$.
Thus the second inequality of~\eqref{eq:main mod estimates}
follows from the second part of Theorem~\ref{thm:main-1}.
To complete the proof, we note from Lemma~\ref{thm:J_E^(Q/Q-1)-leq-Jac}
and~\eqref{eq:mutual-abs-cont-of-surfaces}, 
\eqref{eq:main mod estimates}
that $J_{f,E}(x)\le C J_f^{(Q-1)/Q}(x)$
and $J_{f^{-1},f(E)}(f(x))\le C\, J_{f^{-1}}^{(Q-1)/Q}(f(x))$
for $\mathcal{H}^{Q-1}\mres_{\Sigma E}$-almost every $x$,
for $\Mod_{Q/(Q-1)}$-almost every $E\in\mathcal{L}\setminus\mathcal{L}_P$.
By the chain rule for Radon-Nikodym derivatives, we know that
$J_{f^{-1}}(f(x))=1/J_f(x)$
for $\mathcal{H}^{Q}$-almost every $x\in X$ and thus for
$\mathcal{H}^{Q-1}\mres_{\Sigma E}$-almost every $x$,
for $\Mod_{Q/(Q-1)}$-almost every $E\in\mathcal{L}\setminus\mathcal{L}_P$.
Similarly, $J_{f^{-1},fE}(f(x))=1/J_{f,E}(x)$.
%Thus the comparison between $J_{f,E}$ and $J_f^{(Q-1)/Q}$ holds there.
Thus we have that
\[
J_{f,E}\le C J_f^{(Q-1)/Q}=\frac{C}{(J_{f^{-1}}\circ f)^{(Q-1)/Q}}
\le \frac{C^2}{J_{f^{-1},f(E)}\circ f}=C^2\, J_{f,E}.
\]
This completes the proof of the theorem.
\end{proof}

\begin{remark}\label{rem:MadayanPL}
We now complete the discussion in this paper by considering the reasonableness of excluding 
$\mathcal{L}_P$. The natural question
to ask here is: how much are we losing by neglecting $\mathcal{L}_P$?
Suppose that for all $E\in\mathcal{L}$ we have $f(E)$ is of finite perimeter in $Y$. Then 
$\mathcal{H}^{Q-1}([\mbdy f(E)\setminus\Sigma f(E)]\cap f(P))=0$.
Since $\mathcal{H}^Q(P)=0$, we have
	$\mathcal{H}^{Q-1}(P\cap\Sigma E)=0$ for
	$\Mod_{Q/(Q-1)}$-almost every $E\in \mathcal L$.
Note also that
as $\mathcal{H}^Q(P)=\mathcal{H}^Q(f^{-1}(f(P)))=0$, for $\Mod_{Q/(Q-1)}$-almost every $f(E)$ we have that 
$\mathcal{H}^{Q-1}(f^{-1}(f(P))\cap\Sigma E)=0$ by Proposition~\ref{prop:image-alpha-bdy}(3)
applied to the family $f(E)$, $E\in\mathcal{L}$,
and so
$E\not\in\mathcal{L}_P$. 

There is a large family of 
sets of finite perimeter whose quasiconformal images are also
sets of finite perimeter. Indeed,
thanks to the BV co-area formula
(see~\cite[Proposition 4.2]{M})
and the fact that $N^{1,Q}(X)\subset BV_{loc}(X)$, we know that
if $u\in N^{1,Q}(X)$ is compactly supported,
then for $\mathcal{H}^1$-almost every $t\in\R$ we have that
the superlevel set $E_t:=\{u>t\}$ is of finite perimeter in $X$.
Here, by $N^{1,Q}(X)$ we mean the function class $N^{1,Q}(X;\R)$ from Definition~\ref{def:N1p}.
By~\cite[Theorem 9.10]{HKST}
we know that
$u\circ f^{-1}\in N^{1,Q}(Y)$ since $f$ is quasiconformal. 
Therefore for $\mathcal{H}^1$-almost
every $t\in \R$ we have that $f(E_t)=\{u\circ f^{-1}>t\}$ is also of finite perimeter, and so the collection
of all $t\in\R$ for which  either $E_t$ is not of finite perimeter or $f(E_t)$ is not of finite perimeter is
of $\mathcal{H}^1$-measure zero. Hence there are plenty of sets of finite perimeter in $X$ whose
image under $f$ is of finite perimeter in $Y$. The remaining part of this section
is devoted to make concrete the notion of ``plenty", see Remark~\ref{rem:Matti}.
\end{remark}

\begin{proposition}
Let $u\in N^{1,Q}(X)$ be compactly supported
such that $\int_Xg_u^Q\, d\mu>0$,
where $g_u$ is the minimal $Q$-weak upper gradient of $u$ 
({\rm see e.g.~\cite[Section 6]{HKST}}), and let $\mathcal{L}$ be the collection
of all sets $E_t=\{x\in X\, :\, u(x)>t\}$, $t\in\R$,  for which $0<P(E_t,X)<\infty$.
Then $\Mod_{Q/(Q-1)}(\mathcal{L})>0$.
\end{proposition}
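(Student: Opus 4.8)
The plan is to bound $\int_X\rho^{Q/(Q-1)}\,d\mu$ from below by a fixed positive constant for every admissible $\rho\in\mathcal A(\mathcal L)$, and then take the infimum over such $\rho$. The main tool will be the BV coarea formula of \cite[Proposition~4.2]{M}: since $u$ is compactly supported and $N^{1,Q}(X)\subset BV_{loc}(X)$, we have $u\in BV(X)$, and for every nonnegative Borel function $\rho$ on $X$,
\[
\int_X\rho\,d\Vert Du\Vert=\int_{\R}\Big(\int_X\rho\,dP(E_t,\cdot)\Big)\,dt,\qquad E_t:=\{u>t\}.
\]
Taking $\rho\equiv1$ gives $\int_{\R}P(E_t,X)\,dt=\Vert Du\Vert(X)$, so $E_t$ is of finite perimeter for $\mathcal H^1$-a.e.\ $t$ while $\{t:P(E_t,X)=\infty\}$ is $\mathcal H^1$-null. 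Moreover $\Vert Du\Vert(X)>0$: since $\int_X g_u^Q\,d\mu>0$ the function $u$ is not $\mu$-a.e.\ constant, and on a connected space supporting a $1$-Poincar\'e inequality $\Vert Du\Vert(X)=0$ would force $u$ to be a.e.\ constant (approximate $u$ in $L^1_{loc}$ by locally Lipschitz functions whose upper gradients are small in $L^1$ and apply the Poincar\'e inequality on each ball). Hence $T:=\{t\in\R:0<P(E_t,X)<\infty\}$ has positive Lebesgue measure, and $E_t\in\mathcal L$ whenever $t\in T$.

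Next I would fix $\rho\in\mathcal A(\mathcal L)$. For $t\in T$ the set $E_t$ lies in $\mathcal L$, so $\int_{\Sigma E_t}\rho\,d\mathcal H^{Q-1}\ge1$. By Theorem~\ref{thm:Ambr1}, $P(E_t,\cdot)=\Theta_{E_t}\,\mathcal H^{Q-1}\mres_{\mbdy E_t}$ with $\Theta_{E_t}\ge\tilde\alpha$; since $\Sigma E_t\subset\mbdy E_t$ and $\rho\ge0$,
\[
\int_X\rho\,dP(E_t,\cdot)=\int_{\mbdy E_t}\rho\,\Theta_{E_t}\,d\mathcal H^{Q-1}\ge\tilde\alpha\int_{\Sigma E_t}\rho\,d\mathcal H^{Q-1}\ge\tilde\alpha .
\]
Integrating this over $t$ in the coarea identity — the inner integral vanishes when $P(E_t,X)=0$ and the set where $P(E_t,X)=\infty$ is null — gives $\int_X\rho\,d\Vert Du\Vert\ge\tilde\alpha\,|T|>0$. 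Then, since $u$ is compactly supported it lies in $N^{1,1}(X)$, so $\Vert Du\Vert(A)\le\int_A g_u\,d\mu$ for every Borel $A$ (see \cite{M}); combining this with H\"older's inequality with exponents $Q/(Q-1)$ and $Q$,
\[
\tilde\alpha\,|T|\le\int_X\rho\,g_u\,d\mu\le\Big(\int_X\rho^{Q/(Q-1)}\,d\mu\Big)^{(Q-1)/Q}\Big(\int_X g_u^{Q}\,d\mu\Big)^{1/Q}.
\]
Since $0<\int_X g_u^{Q}\,d\mu<\infty$, this forces $\int_X\rho^{Q/(Q-1)}\,d\mu\ge(\tilde\alpha\,|T|)^{Q/(Q-1)}\big(\int_X g_u^{Q}\,d\mu\big)^{-1/(Q-1)}$, a positive bound independent of $\rho$, and taking the infimum over $\rho\in\mathcal A(\mathcal L)$ yields $\Mod_{Q/(Q-1)}(\mathcal L)>0$.

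The steps needing the most care will be: (a) the lower bound $\Vert Du\Vert(X)>0$, which uses the $1$-Poincar\'e inequality and connectedness but is by now routine; (b) justifying the coarea formula applied to the Borel function $\rho$ (rather than to characteristic functions of sets) together with the measurability of $t\mapsto\int_X\rho\,dP(E_t,\cdot)$, which one obtains by monotone approximation of $\rho$ by simple functions; and (c) making sure the $g_u$ in $\Vert Du\Vert\le g_u\,\mu$ may be taken to be the minimal $Q$-weak upper gradient of the hypothesis, which is legitimate because $u$ is compactly supported and belongs to both $N^{1,1}(X)$ and $N^{1,Q}(X)$, so the corresponding minimal weak upper gradients coincide $\mu$-a.e. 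I expect (b) to be the only genuinely technical point; (a) and (c) are standard facts about BV and Newtonian functions on PI spaces.
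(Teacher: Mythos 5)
Your argument is correct and is essentially the paper's proof: the BV coarea formula, the comparability of $P(E_t,\cdot)$ with $\mathcal H^{Q-1}\mres_{\Sigma E_t}$ from Theorem~\ref{thm:Ambr1}, the bound $\|Du\|\le C g_u\,d\mu$, and H\"older's inequality give the same lower bound on $\Mod_{Q/(Q-1)}(\mathcal L)$. You are in fact slightly more careful than the paper in carrying the comparability constant $\tilde\alpha$ through the estimate — the paper's display $\int_{\Sigma E_t}\rho\,dP(E_t,\cdot)\ge 1$ should really read $\ge\tilde\alpha$, since admissibility is against $\mathcal H^{Q-1}\mres_{\Sigma E_t}$ rather than the perimeter measure — but this is harmlessly absorbed into the final constant, and the remaining differences (not normalizing $u$ onto $[0,1]$, phrasing the non-triviality of $T$ via $\|Du\|(X)>0$) are cosmetic.
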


\begin{proof}
By employing truncation of $u$ and by adding a constant to $u$ if necessary, we may assume without
loss of generality that $0\le u\le 1$ on $X$ and by the monotonicity of
$\Mod_{Q/(Q-1)}$, we may replace $\mathcal{L}$ with
the collection of all $E_t$ with $0<P(E_t,X)<\infty$,
$0<t<1$. If $P(E_t,X)=0$ for almost every $t\in[0,1]$,
then by the $1$-Poincar\'e inequality we know that $E_t$ is either almost all of $X$ or is of measure zero,
whence we would have $u$ is constant, violating that $\int_Xg_u^Q\, d\mu>0$.
Therefore $\mathcal{L}$ has
many sets, one for each $t$ in a positive $\mathcal{H}^1$-measure subset of $[0,1]$.

Let $\rho$ be admissible for computing $\Mod_{Q/(Q-1)}(\mathcal{L})$.
Then for almost every $t\in [0,1]$,
\[
\int_{\Sigma E_t}\rho \, dP(E_t,\cdot)\ge 1.
\] 
Integrating over $t\in[0,1]$ and applying the co-area formula
and H\"older's inequality, we obtain
\begin{align*}
\mathcal{H}^1(\{t\in[0,1]\, :\, E_t\in \mathcal L\}) &\le \int_0^1 \int_{\Sigma E_t}\rho \, dP(E_t,\cdot)\, dt\\
 & =\int_X\rho\, d\Vert Du\Vert\\
 &\le C\, \int_X \rho\, g_u\, d\mathcal{H}^Q\\
 & \le C\, \left(\int_X\rho^{Q/(Q-1)}\, d\mathcal{H}^Q\right)^{1-\frac{1}{Q}}\, \left(\int_Xg_u^Q\, d\mathcal{H}^Q\right)^{\frac{1}{Q}}.
\end{align*}
Taking the infimum over all such $\rho$ gives
\begin{equation}\label{eq:MakeItObviousPL}
0<\frac{\mathcal{H}^1(\{t\in[0,1] \,:\, E_t\in \mathcal L\})^{Q/(Q-1)}}{C^{Q/(Q-1)}\, 
\left(\int_Xg_u^Q\, d\mathcal{H}^Q\right)^{1/(Q-1)}}
\le \Mod_{Q/(Q-1)}(\mathcal{L}). 
\end{equation}
\end{proof}

Suppose $\int_X g_u^Q\, d\mu>0$ and that $0\le u\le 1$ on $X$. If there is some $0<t_0<1$ for which 
$P(E_{t_0},X)=0$, then either $u>t_0$ almost everywhere on $X$, or else $u\le t_0$ almost everywhere on $X$.
In the former case we have $P(E_t,X)=0$ for all $0<t\le t_0$, and in the latter case we have
$P(E_t,X)=0$ for all $t_0\le t<1$. Thus the set of all $t\in(0,1)$ for which $0<P(E_t,X)<\infty$ is a 
full-measure subset of a subinterval of $[0,1]$.

\begin{remark}
If $t_1<t_2$ and $\int_{\{t_1<u<t_2\}}g_u^Q\, d\mathcal{H}^Q>0$, then for
$\mathcal{L}(t_1,t_2)$, which consists of all $E_t\in\mathcal L$ as in the above proposition for $t_1\le t\le t_2$,
we have 
\[
0<\frac{\mathcal{H}^1(\{t_1<t<t_2 :\, E_t\in \mathcal L\})}{C}
  \le \Mod_{\frac{Q}{Q-1}}(\mathcal{L}(t_1,t_2))^{\frac{Q-1}{Q}}
 \left(\int_{\{t_1<u<t_2\}}g_u^Q\, d\mathcal{H}^Q\right)^{\frac{1}{Q}}.
\]
\end{remark}

\begin{remark}\label{rem:Matti}
As above, we consider the family of sets $E_t$ with $E_t$ superlevel sets for a given compactly
supported function $u\in N^{1,Q}(X)$. Then, with $f^{-1}=:F:Y\to X$ quasiconformal, 
we must have $u\circ F\in N^{1,Q}(Y)$, and
so for almost every $t\in\R$ we have both $E_t$ and $F(E_t)$ are of finite perimeter. Moreover, as noted above,
$P(E_t,X)=0$ if and only if $P(f(E_t),Y)=0$. Furthermore, note that
$f(E_t)=\{y\in Y\, :\, u\circ F(y)>t\}$, and so the
discussion in 
Remark~\ref{rem:MadayanPL} shows that for almost every $t>0$ we have $P(f(E_t), Y)$ is finite. 

Now the proof of
the inequality~\eqref{eq:MakeItObviousPL} tells us that the $Q/(Q-1)$-modulus of the family of all $E_t$ for which
$P(E_t,X)<\infty$ and $P(f(E_t),Y)<\infty$ is positive. Indeed, if $\mathcal{L}_0$ is the collection of all
$E_t$ for which $0<P(E_t,X)<\infty$ but $P(f(E_t),Y)=\infty$, then whenever $\rho_0$ is admissible for computing
$\Mod_{Q/(Q-1)}(\mathcal{L}\setminus \mathcal{L}_0)$, we then have $1\le \int_{\Sigma E_t}\rho \,dP(E_t,\cdot)$
for almost every $t\in [0,1]$ with $E_t\in\mathcal{L}$; thus
the computation that derives~\eqref{eq:MakeItObviousPL} also gives the validity of~\eqref{eq:MakeItObviousPL}
with the role of $\rho$ played by $\rho_0$. That is, $\Mod_{Q/(Q-1)}(\mathcal{L}\setminus \mathcal{L}_0)>0$.
Therefore there are plenty of sets of positive
and finite perimeter
in $X$ whose image under $f$ is also of positive and finite perimeter; that is, 
with $\mathcal{L}_u$ the collection of all $E_t$ for which $0<P(E_t,X)<\infty$ and $P(f(E_t),Y)<\infty$ 
(and hence $0<P(f(E_t),Y)<\infty$)
satisfies
$\Mod_{Q/(Q-1)}(\mathcal{L}_u)>0$ and $\Mod_{Q/(Q-1)}(f(\mathcal{L}_u))>0$ provided $\int_Xg_u^Q\, d\mathcal{H}^Q>0$.
\end{remark}

\noindent Address:

\vskip1ex plus 1ex minus 0.5ex
\noindent R.J. and N.S.: Department of Mathematical Sciences,\\
P. O. Box 210025,
University of Cincinnati,
Cincinnati, OH 45221-0025,
U.S.A.

\vskip1ex plus 1ex minus 0.5ex
\noindent E-mail:
{\tt jones3rh@mail.uc.edu}, {\tt shanmun@uc.edu}
\vskip1ex plus 1ex minus 0.5ex
\noindent P.L. University of Jyvaskyla,
Department of Mathematics and Statistics,\\
P.O. Box 35, FI-40014 University of Jyvaskyla, Finland
\vskip1ex plus 1ex minus 0.5ex
\noindent E-mail: {\tt panu.k.lahti@jyu.fi}

\end{document}